\DeclareMathAlphabet{\mathpzc}{OT1}{pzc}{m}{it}
\def\eqdefa{\buildrel\hbox{\footnotesize def}\over =}
\newcommand{\ve}{\varepsilon}
\newcommand{\ud}{\mathrm{d}}
\newcommand{\vv}{\mathbf{v}}
\newcommand{\ww}{\mathbf{w}}
\newcommand{\aaa}{\mathbf{a}}
\newcommand{\xx}{\mathbf{x}}
\newcommand{\yy}{\mathbf{y}}
\newcommand{\nn}{\mathbf{n}}
\newcommand{\hh}{\mathbf{h}}
\newcommand{\sss}{\mathbf{s}}
\newcommand{\A}{\mathbf{A}}
\newcommand{\FF}{\mathbf{F}}
\newcommand{\CR}{\mathcal{R}}
\newcommand{\CB}{\mathcal{B}}
\newcommand{\CD}{\mathcal{D}}
\newcommand{\CE}{\mathcal{E}}
\newcommand{\CF}{\mathcal{F}}
\newcommand{\CG}{\mathcal{G}}
\newcommand{\CU}{\mathcal{U}}
\newcommand{\CA}{\mathcal{A}}
\newcommand{\CP}{\mathcal{P}}
\newcommand{\CH}{\mathcal{H}}
\newcommand{\CL}{\mathcal{L}}
\newcommand{\CK}{\mathcal{K}}
\newcommand{\CJ}{\mathcal{J}}
\newcommand{\ML}{\mathscr{L}}
\newcommand{\MF}{\mathscr{F}}
\newcommand{\Fi}{\mathfrak{i}}
\newcommand{\Fp}{\mathfrak{p}}
\newcommand{\BOm}{\mathbf{\Omega}}
\newtheorem{theorem}{Theorem}
\newtheorem{proposition}[theorem]{Proposition}
\newtheorem{lemma}[theorem]{Lemma}
\numberwithin{theorem}{section}
\numberwithin{equation}{section}
\title{Well-posedness of frame hydrodynamics for biaxial nematic liquid crystals}
\author{Sirui Li\footnote{ School of Mathematics and Statistics, Guizhou University, Guiyang 550025, China (srli@gzu.edu.cn) },
Chenchen Wang\footnote{ School of Mathematics and Statistics, Guizhou University, Guiyang 550025, China (ccwangmath@163.com) },
Jie Xu\footnote{LSEC and NCMIS, Institute of Computational Mathematics and Scientific/Engineering Computing (ICMSEC), Academy of Mathematics and Systems Science (AMSS), Chinese Academy of Sciences, Beijing, China (xujie@lsec.cc.ac.cn)}}
\date{}
\begin{document}
\maketitle
\begin{abstract}
We consider the hydrodynamics for the biaxial nematic phase characterized by a field of orthonormal frame, which can be derived from a molecular-theory-based tensor model.
In dimension two and three, we establish the local well-posedness and the blow-up criterion for smooth solutions to the frame hydrodynamic model. Furthermore, we prove the global existence of weak solutions in $\mathbb{R}^2$ which are nonsmooth at finitely many singular times.

\textbf{Keywords.} Liquid crystals, biaxial nematic phase, hydrodynamics, local well-posedness,  global weak solutions

\textbf{Mathematics Subject Classification (2020).}\quad  35Q35, 35K55, 35A01, 76A15, 76D03
\end{abstract}

%\tableofcontents

\section{Introduction}
The focus of this paper is the hydrodynamics for the biaxial nematic phase. The word `biaxial' represents the symmetry of the local anisotropy in liquid crystals, typically formed by nonuniform  orientational distribution of non-spherical rigid molecules.
The local anisotropy that we are more familiar with is the uniaxial nematic phase, for which the word `uniaxial' is often omitted.
For the uniaxial nematic ordering, the local anisotropy is axisymmetric, so that it can be described by a unit vector field $\nn\in\mathbb{S}^2$.
However, there are much more possibilities of local anisotropy that may not have axisymmetry. In this case, it is necessary to utilize an orthonormal frame $\Fp\in SO(3)$ to take the place of the unit vector.

The hydrodynamics for the uniaxial nematic phase is called the Ericksen--Leslie theory \cite{E-61,E-91,Les}, which is a coupled system between the evolution of the unit vector field $\nn(\xx)$ and the Navier--Stokes equation.
The Ericksen--Leslie theory has been studied extensively both analytically and numerically (see \cite{Lin4,WZZ4,Ball} and the references therein).
The connection between the Ericksen--Leslie model and the models at other levels, including the Doi--Onsager model \cite{KD,EZ,WZZ2} and dynamic $Q$-tensor models \cite{HLWZZ,LWZ,LW,WZZ3}, is also discussed.
Let us briefly review the works on the well-posedness of the Ericksen--Leslie model.
In $\mathbb{R}^2$, the existence of global weak solutions
\cite{Lin2, Hong,HX,HLW,WW} and the uniqueness \cite{Lin5,WWZ-zju} are discussed, respectively.
For special initial data, the global existence of weak solutions in $\mathbb{R}^3$ is studied in \cite{Lin3}.
The well-posedness of smooth solutions to the general Ericksen--Leslie model has been established for the whole space \cite{WZZ1,WW} and for bounded domains \cite{HNPS}.
On the other hand, when an inertial term is added, the well-posedness of smooth solutions is also considered \cite{JL,CW}.
The existence of finite time singularity of short time smooth solutions has also been established in $\mathbb{R}^3$ \cite{HLLW} and $\mathbb{R}^2$ \cite{LLWWZ}, respectively.
For more analytic results, we can refer the review papers \cite{Lin4,WZZ4,Ball} and the references therein.

We turn to the biaxial nematic phase. Since the axisymmetry of local anisotropy is broken, the biaxial hydrodynamics calls for an evolution equation of an orthonormal frame field $\Fp(\xx)$.
Based on the symmetry of the biaxial nematic phase, the form of orientational elasticity  \cite{S-A,GV1,SV} and hydrodynamics \cite{S-A,Liu-M,BP,S-W-M,GV2,LLC} have been written down in various variables and forms that should be equivalent.
The model, in its full form, possesses over twenty phenomenological coefficients. These coefficients are derived from physical parameters in a recent work \cite{LX}.
Starting from a molecular-theory-based dynamical tensor model \cite{XZ} with multiple tensors, the biaxial hydrodynamics is derived using the Hilbert expansion.
As a result, the biaxial hydrodynamics is expressed as the evolution equation of coordinates of the frame field $\Fp(\xx)$, and the coefficients in the model are expressed by those in the tensor model that are derived from molecular parameters.
The energy dissipation law is inherited from the tensor model.
Moreover, the Ericksen--Leslie model can be recovered as a special case.

While the biaxial hydrodynamics has been established decades before, the analytical works are rather lacking.
To our knowledge, only an over-simplified model is studied in \cite{LLWa}, and no work for the full form of biaxial hydrodynamics is found.

The main goal of this paper is to prove the local well-posedness and the blow-up criterion of smooth solutions to the new frame hydrodynamic system in $\mathbb{R}^d(d=2,3)$, and to show the global existence of weak solutions in $\mathbb{R}^2$.
Although some ideas of dealing with the Ericksen--Leslie model can be utilized, many essential difficulties occur as we shall explain below.

As we have mentioned above, the biaxial hydrodynamics has been written in various forms where the variables are also different.
To established the well-posedness results, the most important point is to choose the appropriate variables and the best form for deriving the estimates needed.
As it turns out, we find that the form given in \cite{LX}, expressed by the coordinates of the frame field, would be convenient for us to obtain the corresponding estimates.
Specifically, we need to carefully choose among the equivalent forms of the orientational elasticity.
Such a form has good cancellation structures for both the basic energy dissipation law and the estimates on higher order spatial derivatives.
In contrast, the other forms given previously may not be suitable for analytical study, since they either appear the equations of motion for the three Euler angles \cite{GV2}, or contain more complex relations between coefficients in the phenomenological constitutive laws \cite{LLC}.

In order to prove the well-posedness of the biaxial hydrodynamics, we need to construct a suitable approximate system preserving the energy dissipation.
Nevertheless, this is a difficult task if one only keeps equations of three fundamental degrees of freedom of the orthonormal frame, since the constraint that $\Fp(\xx)\in SO(3)$ is an orthonormal frame needs to be  explicitly imposed (see (\ref{frame-SO3})).
To overcome this difficulty, it is necessary to utilize the equations of all the coordinates of $\Fp(\xx)$ (see (\ref{new-frame-equation-n1})--(\ref{imcompressible-v})), which keep the orthonormal constraint implicitly.
We can thus construct the corresponding approximate system (see (\ref{n1-ve-equation})--(\ref{initial-condition})) which maintains the energy dissipation without worrying about the orthonormal constraint.
A similar construction method has been applied in \cite{WZZ1} for the Ericksen--Leslie model.

Of all the estimates needed for the establishment of well-posedness,  the major difficulty comes from the nonlinear terms with the rotational derivatives on $SO(3)$ in the energy dissipative law.
To deal with these terms, we need to rewrite the elastic energy (see \eqref{new-elasitic-density}) to facilitate higher order energy estimates.
To actually acquire higher order energy estimates (see, for instance, (\ref{higher-deriv-estimate}) or Proposition \ref{Fp-v-4-prop}), another key point is to utilize a decomposition according to the tangential space at a point on $SO(3)$ and its orthogonal complement.
The decomposition will be introduced shortly afterwards.

Before introducing the frame hydrodynamics, let us prepare some notations for orthonormal frames and tensors. The orthonormal frame consists of three mutually perpendicular unit vectors, which we denote explicitly as $\Fp=(\nn_1,\nn_2,\nn_3)$.
As for operations of tensors, the symbol $\otimes$ represents the tensor product, and the dot product between two tensors of the same order is defined by the summation of the product of corresponding coordinates,
\begin{align*}
    U\cdot V=U_{i_1\ldots i_n}V_{i_1\ldots i_n},\quad |U|^2=U\cdot U.
\end{align*}
Hereafter, we adopt the Einstein summation convention on repeated indices.

The monomial notation will be utilized to express symmetric tensors: if the symbol $\otimes$ in a product is omitted, it implies that a tensor is symmetrized. For instance, for the frame $\Fp=(\nn_1,\nn_2,\nn_3)\in SO(3)$ and  $\alpha,\beta=1,2,3$, we denote
\begin{align*}
\nn^2_{\alpha}=\nn_{\alpha}\otimes\nn_{\alpha},\quad
\nn_{\alpha}\nn_{\beta}=\frac{1}{2}(\nn_{\alpha}\otimes\nn_{\beta}+\nn_{\beta}\otimes\nn_{\alpha}),\quad \alpha\neq \beta.
\end{align*}
A useful equality is that the second-order identity tensor $\Fi$ satisfies
\begin{align*}
\Fi=\nn^2_1+\nn^2_2+\nn^2_3.
\end{align*}

The hydrodynamics involves the differential operators on $SO(3)$. For any frame $\Fp=(\nn_1,\nn_2,\nn_3)\in SO(3)$, the tangential space of the three-dimensional manifold $SO(3)$ at a point $\Fp$ is denoted by $T_{\Fp}SO(3)$, which can be spanned by the following orthogonal basis:
\begin{align*}
V_1=(0,\nn_3,-\nn_2),\quad V_2=(-\nn_3,0,\nn_1),\quad V_3=(\nn_2,-\nn_1,0).
\end{align*}
Then, the associated orthogonal complement space can be spanned by
\begin{align*}
&W_1=(0,\nn_3,\nn_2),\quad W_2=(\nn_3,0,\nn_1),\quad W_3=(\nn_2,\nn_1,0),\\
&W_4=(\nn_1,0,0),\quad W_5=(0,\nn_2,0),\quad W_6=(0,0,\nn_3).
\end{align*}
As a consequence, we can define three differential operators $\ML_k(k=1,2,3)$ on $T_{\Fp}SO(3)$ by the inner products of the orthogonal basis $V_k$ and $\partial/\partial\Fp=(\partial/\partial\nn_1,\partial/\partial\nn_2,\partial/\partial\nn_3)$, i.e.,
\begin{align}\label{ML-123}
\left\{
\begin{array}{l}
\ML_1\eqdefa V_1\cdot\frac{\partial}{\partial\Fp}=\nn_3\cdot\frac{\partial}{\partial\nn_2}-\nn_2\cdot\frac{\partial}{\partial\nn_3},\vspace{0.5ex}\\
\ML_2\eqdefa V_2\cdot\frac{\partial}{\partial\Fp}=\nn_1\cdot\frac{\partial}{\partial\nn_3}-\nn_3\cdot\frac{\partial}{\partial\nn_1},\vspace{0.5ex}\\
\ML_3\eqdefa V_3\cdot\frac{\partial}{\partial\Fp}=\nn_2\cdot\frac{\partial}{\partial\nn_1}-\nn_1\cdot\frac{\partial}{\partial\nn_2},
\end{array}
  \right.
\end{align}
where the subscript implies that the differential operator acts on the infinitesimal rotation about $\nn_k(k=1,2,3)$, which can be verified by the relation $\ML_k\nn_p=\epsilon^{kpq}\nn_q$ with $\epsilon^{kpq}$ being the Levi-Civita symbol.
We also use the operators $\ML_k$ on functionals, where the partial derivative $\partial/\partial\Fp$ shall be replaced by variational derivative $\delta/\delta\Fp$.

For any two matrices $A, B\in \mathbb{R}^{3\times3}$, the associated inner product $A\cdot B$ has the following orthogonal decomposition:
\begin{align}\label{AB-orthogonal-decomposition}
A\cdot B=\sum^3_{k=1}\frac{1}{|V_k|^2}(A\cdot V_k)(B\cdot V_k)+\sum^6_{k=1}\frac{1}{|W_k|^2}(A\cdot W_k)(B\cdot W_k).
\end{align}
In addition, for any first-order differential operator $\CD$ and $\alpha,\beta=1,2,3$, it follows that
\begin{align}\label{relation-CD-Fp}
\left\{
\begin{array}{l}
\CD\nn_1=(\CD\nn_1\cdot\nn_2)\nn_2+(\CD\nn_1\cdot\nn_3)\nn_3,\vspace{0.5ex}\\
\CD\nn_2=(\CD\nn_2\cdot\nn_1)\nn_1+(\CD\nn_2\cdot\nn_3)\nn_3,\vspace{0.5ex}\\
\CD\nn_3=(\CD\nn_3\cdot\nn_1)\nn_1+(\CD\nn_3\cdot\nn_2)\nn_2,\vspace{0.5ex}\\
\CD\nn_{\alpha}\cdot\nn_{\beta}+\CD\nn_{\beta}\cdot\nn_{\alpha}=\CD(\nn_{\alpha}\cdot\nn_{\beta})=0,\vspace{0.5ex}\\
W_\alpha\cdot\CD \Fp=0.
\end{array}
  \right.
\end{align}

\subsection{Frame hydrodynamics}

We first write down the orientational elasticity for the biaxial nematic phases, which is a result of the symmetry of local anisotropy. Let us denote the elastic energy as
\begin{align}\label{elastic-energy}
\CF_{Bi}[\Fp]=\int_{\mathbb{R}^d}f_{Bi}(\Fp,\nabla\Fp)\ud\xx,
\end{align}
where the deformation free energy density $f_{Bi}$, being a function of the spatial derivatives of the orthonormal frame $\Fp$, has the following form \cite{SV,GV1,Xu2},
\begin{align*}
f_{Bi}(\Fp,\nabla\Fp)=&\,\frac{1}{2}\Big(K_{1}(\nabla\cdot\nn_{1})^{2}+K_{2}(\nabla\cdot\nn_{2})^{2}+K_{3}(\nabla\cdot\nn_{3})^{2}\\
&\,+K_{4}(\nn_{1}\cdot\nabla\times\nn_{1})^{2}+K_{5}(\nn_{2}\cdot\nabla\times\nn_{2})^{2}+K_{6}(\nn_{3}\cdot\nabla\times\nn_{3})^{2}\\
&\,+K_{7}(\nn_{3}\cdot\nabla\times\nn_{1})^{2}+K_{8}(\nn_{1}\cdot\nabla\times\nn_{2})^{2}+K_{9}(\nn_{2}\cdot\nabla\times\nn_{3})^{2}\\
&\,+K_{10}(\nn_{2}\cdot\nabla\times\nn_{1})^2+K_{11}(\nn_{3}\cdot\nabla\times\nn_{2})^2+K_{12}(\nn_{1}\cdot\nabla\times\nn_{3})^2\\
&\,+\gamma_{1}\nabla\cdot[(\nn_{1}\cdot\nabla)\nn_{1}-(\nabla\cdot\nn_{1})\nn_{1}]+\gamma_{2}\nabla\cdot[(\nn_{2}\cdot\nabla)\nn_{2}-(\nabla\cdot\nn_{2})\nn_{2}]\\
&\,+\gamma_{3}\nabla\cdot[(\nn_{3}\cdot\nabla)\nn_{3}-(\nabla\cdot\nn_{3})\nn_{3}]\Big).
\end{align*}
The elasticity above involves twelve bulk terms and three surface terms.
The coefficients $K_i(i=1,\cdots,12)$ of bulk terms are all non-negative. Each surface term is a null Lagrangian when the frame $\Fp$ at infinity is prescribed by some given value of the frame field, and contributes nothing in variational derivatives, so that we could choose their coefficients $\gamma_i>0(i=1,2,3)$ according to our demand, which we specify later.
Note that the orientational elasticity has multiple equivalent forms \cite{SV,GV1,Xu2}.
The form we give above is one of the most convenient for estimates later.

In order to describe the frame hydrodynamics, we introduce a set of local basis formed by nine second-order tensors: the identity tensor $\Fi$, five symmetric traceless tensors,
\begin{align*}%\label{sss-five}
    \sss_1=\nn^2_1-\frac13\Fi,\quad \sss_2=\nn^2_2-\nn^2_3,\quad \sss_3=\nn_1\nn_2,\quad \sss_4=\nn_1\nn_3,\quad \sss_5=\nn_2\nn_3,
\end{align*}
and three asymmetric traceless tensors,
\begin{align*}%\label{aaa-three}
    \aaa_1=\nn_1\otimes\nn_2-\nn_2\otimes\nn_1,\quad \aaa_2=\nn_3\otimes\nn_1-\nn_1\otimes\nn_3,\quad \aaa_3=\nn_2\otimes\nn_3-\nn_3\otimes\nn_2.
\end{align*}

The frame hydrodynamics consists of an evolution equation for the orthonormal frame field $\Fp(\xx)=\big(\nn_1(\xx),\nn_2(\xx),\nn_3(\xx)\big)$, coupled with the Navier--Stokes equations for the fluid velocity field $\vv(\xx)$.
Let us write it down according to the form in \cite{LX}:
\begin{align}
&\,\chi_1\dot{\nn}_2\cdot\nn_3-\frac{1}{2}\chi_1\BOm\cdot\aaa_3-\eta_1\A\cdot\sss_5+\ML_1\CF_{Bi}=0,\label{frame-equation-n1}\\
&\,\chi_2\dot{\nn}_3\cdot\nn_1-\frac{1}{2}\chi_2\BOm\cdot\aaa_2-\eta_2\A\cdot\sss_4+\ML_2\CF_{Bi}=0,\label{frame-equation-n2}\\
&\,\chi_3\dot{\nn}_1\cdot\nn_2-\frac{1}{2}\chi_3\BOm\cdot\aaa_1-\eta_3\A\cdot\sss_3+\ML_3\CF_{Bi}=0,\label{frame-equation-n3}\\
&\,\Fp=(\nn_1,\nn_2,\nn_3)\in SO(3),\label{frame-SO3}\\
&\,\dot{\vv}=-\nabla p+\eta\Delta\vv+\nabla\cdot\sigma+\mathfrak{F},\label{yuan-frame-equation-v}\\
&\,\nabla\cdot\vv=0,\label{yuan-imcompressible-v}
\end{align}
where $\dot{f}=\partial_tf+\vv\cdot\nabla f$ stands for the material derivative, and  $\ML_k\CF_{Bi} (k=1,2,3)$ are the variational derivatives along the infinitesimal rotation round $\nn_k (k=1,2,3)$, respectively.
The constraint condition (\ref{frame-SO3}) is necessary since we only have equations for three scalars in $\Fp$.
In the equation of the velocity $\vv=(v_1,v_2,v_3)^T$, the pressure $p$ ensures the incompressible condition (\ref{yuan-imcompressible-v}), $\eta$ is the viscous coefficient.
The body force $\mathfrak{F}$ is defined by
\begin{align}\label{external-force-F}
\mathfrak{F}_i=\partial_i\nn_1\cdot\nn_2\ML_3\CF_{Bi}+\partial_i\nn_3\cdot\nn_1\ML_2\CF_{Bi}+\partial_i\nn_2\cdot\nn_3\ML_1\CF_{Bi}.
\end{align}
To write down the stress, we denote by $\A$ and $\BOm$ the
symmetric and skew-symmetric parts of the velocity gradient $\kappa_{ij}=\partial_jv_i$, respectively, i.e.,
\begin{align*}
\A=\frac{1}{2}(\kappa+\kappa^T),\quad\BOm=\frac{1}{2}(\kappa-\kappa^T).
\end{align*}
The stress $\sigma=\sigma(\Fp,\vv)$ is given by
\begin{align}\label{sigma-e}
\sigma(\Fp,\vv)=&\,\beta_1(\A\cdot\sss_1)\sss_1+\beta_0(\A\cdot\sss_2)\sss_1+\beta_0(\A\cdot\sss_1)\sss_2+\beta_2(\A\cdot\sss_2)\sss_2\nonumber\\
&\,+\beta_3(\A\cdot\sss_3)\sss_3-\eta_3\Big(\dot{\nn}_1\cdot\nn_2-\frac{1}{2}\BOm\cdot\aaa_1\Big)\sss_3\nonumber\\
&\,+\beta_4(\A\cdot\sss_4)\sss_4-\eta_2\Big(\dot{\nn}_3\cdot\nn_1-\frac{1}{2}\BOm\cdot\aaa_2\Big)\sss_4\nonumber\\
&\,+\beta_5(\A\cdot\sss_5)\sss_5-\eta_1\Big(\dot{\nn}_2\cdot\nn_3-\frac{1}{2}\BOm\cdot\aaa_3\Big)\sss_5\nonumber\\
&\,+\frac{1}{2}\eta_3(\A\cdot\sss_3)\aaa_1-\frac{1}{2}\chi_3\Big(\dot{\nn}_1\cdot\nn_2-\frac{1}{2}\BOm\cdot\aaa_1\Big)\aaa_1\nonumber\\
&\,+\frac{1}{2}\eta_2(\A\cdot\sss_4)\aaa_2-\frac{1}{2}\chi_2\Big(\dot{\nn}_3\cdot\nn_1-\frac{1}{2}\BOm\cdot\aaa_2\Big)\aaa_2\nonumber\\
&\,+\frac{1}{2}\eta_1(\A\cdot\sss_5)\aaa_3-\frac{1}{2}\chi_1\Big(\dot{\nn}_2\cdot\nn_3-\frac{1}{2}\BOm\cdot\aaa_3\Big)\aaa_3,
\end{align}
where the coefficients in (\ref{sigma-e}) can be derived from molecular parameters and satisfy the following nonnegative definiteness conditions (see {\rm \cite{LX}} for details):
\begin{align}\label{coefficient-conditions}
\left\{
\begin{array}{l}
\beta_i\geq0,~i=1,\cdots,5,\quad \chi_j>0,~j=1,2,3,\quad \eta>0,\vspace{1ex}\\
\beta^2_0\leq\beta_1\beta_2,~~\eta^2_1\leq\beta_5\chi_1,~~\eta^2_2\leq\beta_4\chi_2,~~\eta^2_3\leq\beta_3\chi_3.
\end{array}
  \right.
\end{align}
The relations in (\ref{coefficient-conditions}) ensure that the biaxial hydrodynamics has a basic energy dissipation law (see Proposition \ref{energ-diss-prop}).

For convenience on the study of well-posedness, we derive the equations of ${\nn}_i(i=1,2,3)$ from the frame equations (\ref{frame-equation-n1})--(\ref{frame-equation-n3}) by using the relation (\ref{relation-CD-Fp}), for instance,
\begin{align*}
\dot{\nn}_1=&(\dot{\nn}_1\cdot\nn_2)\nn_2+(\dot{\nn}_1\cdot\nn_3)\nn_3\\
=&\Big(\frac{1}{2}\BOm\cdot\aaa_1+\frac{\eta_3}{\chi_3}\A\cdot\sss_3-\frac{1}{\chi_3}\ML_3\CF_{Bi}\Big)\nn_2\\
&-\Big(\frac{1}{2}\BOm\cdot\aaa_2+\frac{\eta_2}{\chi_2}\A\cdot\sss_4-\frac{1}{\chi_2}\ML_2\CF_{Bi}\Big)\nn_3.
\end{align*}
Therefore, the frame hydrodynamics (\ref{frame-equation-n1})--(\ref{yuan-imcompressible-v}) can be reformulated as
\begin{align}
\dot{\nn}_1=&\Big(\frac{1}{2}\BOm\cdot\aaa_1+\frac{\eta_3}{\chi_3}\A\cdot\sss_3-\frac{1}{\chi_3}\ML_3\CF_{Bi}\Big)\nn_2\nonumber\\
&-\Big(\frac{1}{2}\BOm\cdot\aaa_2+\frac{\eta_2}{\chi_2}\A\cdot\sss_4-\frac{1}{\chi_2}\ML_2\CF_{Bi}\Big)\nn_3,\label{new-frame-equation-n1}\\
\dot{\nn}_2=&-\Big(\frac{1}{2}\BOm\cdot\aaa_1+\frac{\eta_3}{\chi_3}\A\cdot\sss_3-\frac{1}{\chi_3}\ML_3\CF_{Bi}\Big)\nn_1\nonumber\\
&+\Big(\frac{1}{2}\BOm\cdot\aaa_3+\frac{\eta_1}{\chi_1}\A\cdot\sss_5-\frac{1}{\chi_1}\ML_1\CF_{Bi}\Big)\nn_3,\label{new-frame-equation-n2}\\
\dot{\nn}_3=&\Big(\frac{1}{2}\BOm\cdot\aaa_2+\frac{\eta_2}{\chi_2}\A\cdot\sss_4-\frac{1}{\chi_2}\ML_2\CF_{Bi}\Big)\nn_1\nonumber\\
&-\Big(\frac{1}{2}\BOm\cdot\aaa_3+\frac{\eta_1}{\chi_1}\A\cdot\sss_5-\frac{1}{\chi_1}\ML_1\CF_{Bi}\Big)\nn_2,\label{new-frame-equation-n3}\\
\dot{\vv}=&-\nabla p+\eta\Delta\vv+\nabla\cdot\sigma+\mathfrak{F},\label{frame-equation-v}\\
\nabla\cdot\vv=&0.\label{imcompressible-v}
\end{align}
It is easy to verify that the above system automatically satisfy $(\nn_1,\nn_2,\nn_3)\in SO(3)$, so that the explicit constraint can be discarded.

Another remark is that compared with the original model in \cite{LX}, we set several quantities to special values, such as letting the concentration, as well as the product of Boltzmann constant and the absolute temperature, be equal to one, which makes no difference in the following analysis.

\subsection{Main results}

For any frame $\Fp=(\nn_1,\nn_2,\nn_3)\in SO(3)$, we denote
\begin{align*}
|\nabla^{j}\Fp|^{k}=\sum^3_{\alpha=1}|\nabla^{j}\nn_{\alpha}|^{k}.
\end{align*}

Firstly, we give the local well-posedness result and the blow-up criterion of smooth solutions to the biaxial frame system \eqref{new-frame-equation-n1}--\eqref{imcompressible-v}. More precisely, we will show the following theorem.
%The first result is stated as follows.
\begin{theorem}\label{local-posedness-theorem}
Let $s\geq2$ be an integer. Assume that $(\nabla\Fp^{(0)},\vv^{(0)})\in H^{2s}(\mathbb{R}^d)\times H^{2s}(\mathbb{R}^d)(d=2~\text{or}~3)$ is the given initial data satisfying  $\nabla\cdot\vv^{(0)}=0$ and $\Fp^{(0)}=\big(\nn^{(0)}_1,\nn^{(0)}_2,\nn^{(0)}_3\big)\in SO(3)$. Then, there exists $T>0$ and a unique solution $(\Fp,\vv)$ to the biaxial frame system \eqref{new-frame-equation-n1}--\eqref{imcompressible-v} such that
\begin{align*}
\nabla\Fp\in C\big([0,T];H^{2s}(\mathbb{R}^d)\big),\quad \vv\in C\big([0,T];H^{2s}(\mathbb{R}^d)\big)\cap L^2\big(0,T;H^{2s+1}(\mathbb{R}^d)\big).
\end{align*}
Let $T^*$ be the maximal existence time of the solution. If $T^*<+\infty$, then it is necessary that
\begin{align*}
\int^{T^*}_0\big(\|\nabla\times\vv(t)\|_{L^{\infty}}+\|\nabla\Fp\|^2_{L^{\infty}}\big)\ud t=+\infty,\quad \|\nabla\Fp\|^2_{L^{\infty}}=\sum^3_{i=1}\|\nabla\nn_i\|^2_{L^{\infty}}.
\end{align*}
\end{theorem}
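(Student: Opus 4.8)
The plan is to prove Theorem \ref{local-posedness-theorem} by the classical scheme for quasilinear parabolic--hyperbolic coupled systems: construct an approximate system that preserves the energy dissipation structure, derive uniform-in-$\ve$ a priori estimates, pass to the limit, and then establish uniqueness and the blow-up criterion separately. First I would set up the Galerkin-type or regularized approximation alluded to in the introduction (the system (\ref{n1-ve-equation})--(\ref{initial-condition}) built from the equations of all nine coordinates of $\Fp$), which has the advantage that the constraint $\Fp\in SO(3)$ is preserved automatically and the basic energy dissipation law (Proposition \ref{energ-diss-prop}) survives the regularization. On this level the existence of solutions is routine (linear parabolic theory plus a fixed-point/ODE argument in the Galerkin parameter), so the real content is the uniform estimates.

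The core of the argument is the higher-order energy estimate. I would work with the energy-type quantity
\begin{align*}
E_s(t)=\frac12\|\vv\|_{H^{2s}}^2+\CF_{Bi}[\Fp]+(\text{higher-order elastic correction}),
\end{align*}
and differentiate in time. Applying $\nabla^{2s}$ (or, more precisely in the frame variables, the rotational operators $\ML_k$ composed with spatial derivatives) to (\ref{new-frame-equation-n1})--(\ref{new-frame-equation-n3}) and to (\ref{frame-equation-v}), testing against the corresponding quantities, the quadratic principal terms cancel through the same structural identities that give the basic energy law — here one exploits the orthogonal decomposition (\ref{AB-orthogonal-decomposition}) and the relations (\ref{relation-CD-Fp}), together with the rewritten elastic density \eqref{new-elasitic-density}, so that the tangential part of $\nabla\Fp$ (governed by the $\ML_k$) is separated from the $W_\alpha$-part which is algebraically controlled by lower-order terms via $W_\alpha\cdot\CD\Fp=0$. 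The viscous term $\eta\Delta\vv$ supplies the dissipation $\eta\|\nabla\vv\|_{H^{2s}}^2$, and the coefficient conditions (\ref{coefficient-conditions}) guarantee that the coupling terms between $\A,\BOm$ and $\dot\nn_i$ contribute a nonnegative definite quadratic form rather than spoiling the sign. The remaining commutator and lower-order nonlinear terms are estimated by Sobolev embedding, the Moser/Kato--Ponce product and commutator inequalities, and interpolation, yielding a differential inequality of the form $\frac{d}{dt}E_s \le C\,P(E_s)$ for a polynomial $P$; since $s\ge 2$ forces $2s\ge 4 > d/2+1$, all the $L^\infty$ norms appearing are controlled by $E_s$. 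This closes the estimate on a time interval $[0,T]$ with $T$ depending only on the initial norm, and then a standard compactness/weak-limit argument (Aubin--Lions) produces a solution with the stated regularity; the pressure is recovered from the divergence-free condition as usual.

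For uniqueness I would take two solutions with the same data, write the equations for the differences $\delta\vv,\delta\nn_i$, and estimate in the lower norm $L^2$ for $\vv$ and $H^1$ for $\Fp$ (one derivative less than the a priori space, which is the usual loss for quasilinear problems); Gronwall then gives $\delta\vv=0,\delta\Fp=0$. For the blow-up criterion, the observation is that in the differential inequality for $E_s$ one can arrange — by a more careful bookkeeping of where the top-order derivatives land — that the right-hand side is bounded by $C\big(\|\nabla\times\vv\|_{L^\infty}+\|\nabla\Fp\|_{L^\infty}^2\big)E_s$ plus controllable terms, rather than by a genuine polynomial in $E_s$; the vorticity rather than the full $\nabla\vv$ suffices because the symmetric part $\A$ enters the $\Fp$-equations only through the bounded tensors $\sss_i$ and can be absorbed, while $\BOm$ is the vorticity. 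Then if $\int_0^{T^*}\big(\|\nabla\times\vv\|_{L^\infty}+\|\nabla\Fp\|_{L^\infty}^2\big)\,dt<\infty$, Gronwall keeps $E_s$ finite up to $T^*$ and the solution extends past $T^*$, contradicting maximality.

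The main obstacle I expect is the higher-order cancellation: the nonlinear terms coming from the rotational derivatives $\ML_k\CF_{Bi}$ in the frame equations are genuinely of the same differential order as the dissipation one hopes to extract, so a naive estimate loses derivatives. The resolution — and the technical heart of the paper — is the choice of the elastic energy form \eqref{new-elasitic-density} together with the tangential/normal decomposition relative to $T_\Fp SO(3)$: only after rewriting $\CF_{Bi}$ in the "right" variables does the troublesome top-order term reorganize into a perfect-derivative (energy) contribution plus lower-order remainders. Verifying this cancellation term by term, and checking that the surface-term coefficients $\gamma_i$ can indeed be chosen to make the bulk energy density a positive definite quadratic form in the tangential derivatives of $\Fp$ (so that $\CF_{Bi}$ controls $\|\nabla\Fp\|_{H^{2s}}^2$ up to lower order), is where the bulk of the careful computation lies.
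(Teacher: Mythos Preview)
Your overall architecture — mollified approximate system preserving the dissipation law, uniform $H^{2s}$ estimates via the structural cancellations, compactness, then a low-norm uniqueness argument — matches the paper's and is correct in outline. Two points, however, need correction.

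\medskip
\textbf{The blow-up criterion.} Your explanation for why only $\|\nabla\times\vv\|_{L^\infty}$ (rather than $\|\nabla\vv\|_{L^\infty}$) appears is not right. The symmetric part $\A$ does not disappear merely because it is contracted with bounded tensors $\sss_i$: the transport terms $\vv\cdot\nabla\vv$ in (\ref{frame-equation-v}) and $\vv\cdot\nabla\nn_i$ in (\ref{new-frame-equation-n1})--(\ref{new-frame-equation-n3}) generate commutators $[\Delta^s,\vv\cdot]\nabla(\cdot)$ whose standard Kato--Ponce bound costs the full $\|\nabla\vv\|_{L^\infty}$, and the stress terms likewise. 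The paper therefore first proves the intermediate inequality
\[
\frac{\ud}{\ud t}\CE_s(\Fp,\vv)\le C\big(1+\|\nabla\Fp\|_{L^\infty}^2+\|\nabla\vv\|_{L^\infty}\big)\CE_s(\Fp,\vv),
\]
and only then upgrades to vorticity via the Beale--Kato--Majda logarithmic Sobolev inequality
\[
\|\nabla\vv\|_{L^\infty}\le C\big(1+\|\nabla\vv\|_{L^2}+\|\nabla\times\vv\|_{L^\infty}\ln(3+\|\vv\|_{H^k})\big),
\]
which, inserted into the differential inequality and followed by Gronwall on $\ln(3+\CE_s)$, yields the stated criterion. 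Without this logarithmic step your argument does not close.

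\medskip
\textbf{The role of the tangential decomposition.} You frame the $SO(3)$ decomposition as a device for \emph{cancelling} top-order terms into ``perfect-derivative contributions''. In the paper it plays a different role: it provides \emph{coercivity} of the dissipation. The frame equations are gradient-flow-like, and the basic law already dissipates $\sum_k\chi_k^{-1}\|\ML_k\CF_{Bi}\|_{L^2}^2$; at order $2s$ the corresponding term is $\sum_k\chi_k^{-1}\|\CH^{\Delta^s}_k\|_{L^2}^2$ with $\CH^{\Delta^s}_k=V_k\cdot\Delta^s(\delta\CF_{Bi}/\delta\Fp)$. The orthogonal splitting (\ref{AB-orthogonal-decomposition}) together with $W_\alpha\cdot\CD\Fp=0$ is used to show the lower bound
\[
\sum_{k=1}^3\frac{1}{\chi_k}\|\CH^{\Delta^s}_k\|_{L^2}^2\ \ge\ \frac{2\gamma^2}{\chi}\|\Delta^{s+1}\Fp\|_{L^2}^2 - C(1+\|\nabla\Fp\|_{L^\infty}^2)\|\nabla\Fp\|_{H^{2s}}^2,
\]
i.e.\ the tangential components of $\Delta^{s+1}\Fp$ are controlled directly by the dissipation while the normal components are lower order. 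This is what absorbs the dangerous $\|\Delta^{s+1}\Fp\|$ terms appearing on the right-hand side, and it is the mechanism you should articulate rather than a cancellation into exact derivatives.
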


Armed with the reformulation \eqref{new-frame-equation-n1}--\eqref{new-frame-equation-n3}, it is straightforward to construct an approximate system maintaining the energy dissipative relation.
We thus can prove the local well-posedness following the idea in \cite{WZZ1,WW}.
The constraint $\Fp\in SO(3)$ can be inherently attained in the passage of limit to the original PDE system.
For the blow-up criterion, the key ingredient is to estimate the higher order derivative terms with respect to $\ML_k\CF_{Bi}(k=1,2,3)$. We circumvent the difficulty using the orthogonal decomposition \eqref{AB-orthogonal-decomposition} to give the lower bound of the higher order estimate for the dissipation function, that is,
\begin{align}\label{Higher-order-key-estimate}
\int_{\mathbb{R}^d}\sum^3_{k=1}\frac{1}{\chi_k}|\Delta^s(\ML_k\CF_{Bi})|^2\ud\xx\geq \frac{2\gamma^2}{\chi}\int_{\mathbb{R}^d}|\Delta^{s+1}\Fp|^2\ud\xx+\text{lower order terms},
\end{align}
where $\gamma=\min\{\gamma_1,\gamma_2,\gamma_3\}>0$ and $\chi=\max\{\chi_1,\chi_2,\chi_3\}>0$.

The second result of this paper is concerned with the global existence of weak solutions to the biaxial frame system (\ref{new-frame-equation-n1})--(\ref{imcompressible-v}) in $\mathbb{R}^2$. Here, we assume that the solution $(\Fp,\vv)$ satisfies $v_3=0,\frac{\partial\vv}{\partial x_3}=0$ and $\frac{\partial\Fp}{\partial x_3}=\big(\frac{\partial\nn_1}{\partial x_3},\frac{\partial\nn_2}{\partial x_3},\frac{\partial\nn_3}{\partial x_3}\big)=\mathbf{0}$. Moreover, $\nabla\cdot \sigma$ should be understood as (see \cite{WW,HLW})
\begin{align*}
\left(
  \begin{array}{ccc}
    1 & 0& 0 \\
    0 & 1& 0 \\
    0 & 0& 0
  \end{array}
\right)
\cdot(\nabla\cdot\sigma).
\end{align*}
For any given constant orthonormal frame $\Fp^{*}=(\nn^{*}_1,\nn^{*}_2,\nn^{*}_3)\in SO(3)$, we denote
\begin{align*}
    H^1_{\Fp^{*}}\big(\mathbb{R}^2,SO(3)\big)\eqdefa\big\{\Fp=(\nn_1,\nn_2,\nn_3): \Fp-\Fp^{*}\in H^1(\mathbb{R}^2;\mathbb{R}^3),~|\nn_i|=1~{\rm a.e.~ in}~\mathbb{R}^2, i=1,2,3\big\}.
\end{align*}

The second result is stated as follows.
\begin{theorem}\label{global-posedness-theorem}
Let $(\Fp^{(0)}, \vv^{(0)})\in H^1_{\Fp^{*}}\big(\mathbb{R}^2,SO(3)\big)\times L^2(\mathbb{R}^2,\mathbb{R}^2)$ be given initial data with $\nabla\cdot\vv^{(0)}=0$ and $\Fp^{(0)}=\big(\nn^{(0)}_1,\nn^{(0)}_2,\nn^{(0)}_3\big)\in SO(3)$. Then, there exists a global weak solution $(\Fp,\vv):\mathbb{R}^2\times[0,+\infty)\rightarrow SO(3)\times\mathbb{R}^2$ of the biaxial frame system {\rm (\ref{new-frame-equation-n1})--(\ref{imcompressible-v})} such that the solution $(\Fp,\vv)$ is smooth in $\mathbb{R}^2\times((0,+\infty)\setminus\{T_{l}\}^L_{l=1})$ for a finite number of times $\{T_{l}\}^L_{l=1}$. Furthermore, there exists two constants $\ve_0>0$ and $R_0>0$ such that each singular point $(x^l_i,T_l)$ is characterized by the condition
\begin{align*}
    \limsup_{t\nearrow T_l}\int_{B_R(x^l_i)}\big(|\nabla\Fp|^2+|\vv|^2\big)(\cdot,t)\ud t>\ve_0,\quad |\nabla\Fp|^2=\sum^3_{i=1}|\nabla\nn_i|^2,
\end{align*}
for any $R>0$ with $R\leq R_0$.
\end{theorem}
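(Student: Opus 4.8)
The plan is to realize the weak solution as a limit of the smooth solutions provided by Theorem~\ref{local-posedness-theorem}, keeping them under control globally in time by the basic energy dissipation law (Proposition~\ref{energ-diss-prop}) and extracting compactness, away from a small ``bad'' set, by a two-dimensional $\ve$-regularity argument of Struwe type. The blueprint is that of the global weak solution theory for the $2$D Ericksen--Leslie system in \cite{Lin2,Hong,HX,HLW,WW}; the new points are the rotational derivatives $\ML_k\CF_{Bi}$, the $SO(3)$ constraint (handled through the reformulation \eqref{new-frame-equation-n1}--\eqref{new-frame-equation-n3}) and the more elaborate stress \eqref{sigma-e}. Throughout, the reduction hypotheses $v_3=0$, $\partial_{x_3}(\cdot)=0$ are preserved by the flow, so it suffices to treat everything as functions of $(x_1,x_2)$ with $\Fp$ valued in the compact manifold $SO(3)$.

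\textbf{Step 1 (approximation and the global energy bound).} Mollify the data: pick smooth $(\Fp^{(0)}_m,\vv^{(0)}_m)$ with $\Fp^{(0)}_m\in SO(3)$, $\nabla\cdot\vv^{(0)}_m=0$, $\nabla\Fp^{(0)}_m\to\nabla\Fp^{(0)}$ and $\vv^{(0)}_m\to\vv^{(0)}$ in $L^2$, and $E_m(0):=\tfrac12\|\vv^{(0)}_m\|_{L^2}^2+\CF_{Bi}[\Fp^{(0)}_m]$ bounded. Theorem~\ref{local-posedness-theorem} yields smooth solutions $(\Fp_m,\vv_m)$ on a maximal interval. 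Along them Proposition~\ref{energ-diss-prop} gives the energy identity, and, together with the coercivity of the rewritten elastic density \eqref{new-elasitic-density} (this is where the freedom in $\gamma_i>0$ is used) and the conditions \eqref{coefficient-conditions}, this produces the uniform bound
\begin{align*}
\sup_{t\ge0}\Big(\|\nabla\Fp_m(t)\|_{L^2(\mathbb{R}^2)}^2+\|\vv_m(t)\|_{L^2(\mathbb{R}^2)}^2\Big)+\int_0^{\infty}\Big(\eta\|\nabla\vv_m\|_{L^2}^2+\sum_{k=1}^3\tfrac{1}{\chi_k}\|\ML_k\CF_{Bi}\|_{L^2}^2\Big)\ud t\le C\,E(0).
\end{align*}
Feeding this into \eqref{new-frame-equation-n1}--\eqref{new-frame-equation-n3} and into \eqref{frame-equation-v}, \eqref{external-force-F} controls $\partial_t\Fp_m$ and $\partial_t\vv_m$ in suitable negative Sobolev spaces locally in space, which supplies the space-time compactness needed for the limit passage.

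\textbf{Step 2 ($\ve$-regularity; the main obstacle).} Using \eqref{relation-CD-Fp} and the rewriting \eqref{new-elasitic-density}, the frame equations become a quasilinear parabolic system
\begin{align*}
\partial_t\Fp+\vv\cdot\nabla\Fp=\gamma\,\Delta\Fp+N(\Fp,\nabla\Fp)+B(\Fp)\,\nabla\vv,\qquad\gamma=\min_k\gamma_k>0,
\end{align*}
where $N$ is quadratic in $\nabla\Fp$ with bounded $\Fp$-dependent coefficients (the harmonic-map-heat-flow part into $SO(3)$) and $B(\Fp)\nabla\vv$ is linear in $\nabla\vv$; simultaneously \eqref{frame-equation-v}--\eqref{imcompressible-v} is the $2$D Navier--Stokes system where $\eta\Delta\vv$ dominates the $\A$-part of $\nabla\cdot\sigma$ (bounded coefficients) and the remaining forcing $\nabla\cdot\sigma+\mathfrak{F}$ scales like $\nabla\vv\,\nabla\Fp$ and $\nabla\Fp\,\nabla^2\Fp$. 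On this structure I would prove a local energy inequality on parabolic cylinders $Q_R=B_R(x_0)\times(t_0-R^2,t_0]$ and then the key lemma: there are $\ve_0>0$, $R_0>0$ such that if
\begin{align*}
\sup_{t_0-R^2\le t\le t_0}\int_{B_R(x_0)}\big(|\nabla\Fp_m|^2+|\vv_m|^2\big)(\cdot,t)\,\ud\xx\le\ve_0,\qquad R\le R_0,
\end{align*}
then $(\Fp_m,\vv_m)$ is smooth on $Q_{R/2}$ with bounds depending only on $\ve_0,R,E(0)$. The proof uses the $2$D Ladyzhenskaya inequality $\|f\|_{L^4}^2\le C\|f\|_{L^2}\|\nabla f\|_{L^2}$, a Rivi\`ere/Struwe-type iteration to gain a little integrability, and then parabolic $L^p$ bootstrap. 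This is the hard part: the rotational derivatives force one to use the orthogonal decomposition \eqref{AB-orthogonal-decomposition} and the lower bound \eqref{Higher-order-key-estimate} to see that $\ML_k\CF_{Bi}$ is, to leading order, a good elliptic operator, so that the bootstrap closes despite the complicated stress \eqref{sigma-e} and despite $\Fp$ only being constrained to $SO(3)$.

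\textbf{Step 3 (singular set, finiteness of singular times, passage to the limit).} By the uniform energy bound, for each $m$ and each time the set of points carrying more than $\ve_0$ of local energy at scale $R_0$ has at most $E(0)/\ve_0$ elements; a covering and diagonal argument then yields, after extracting a subsequence, $\Fp_m\to\Fp$ in $H^1_{loc}$ and $\vv_m\to\vv$ in $L^2_{loc}$ on $\mathbb{R}^2\times(0,\infty)\setminus\Sigma$, where $\Sigma$ is relatively closed, misses almost every time slice, and (by the standard bubbling mechanism: at a concentration time a nonconstant finite-energy harmonic map into $SO(3)$ splits off, removing at least $\ve_0$ of energy, while $t\mapsto E(t)$ is nonincreasing) meets only finitely many times $\{T_l\}_{l=1}^L$ with $L\le E(0)/\ve_0$. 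Off $\Sigma$ the convergence is strong enough that $(\Fp,\vv)$ is a classical solution, and with Step~1 and the Aubin--Lions lemma it is a global weak solution of \eqref{new-frame-equation-n1}--\eqref{imcompressible-v} across $\Sigma$; the a.e.\ convergence $\Fp_m\to\Fp$ inherited from strong $L^2_{loc}$ convergence preserves $|\nn_i|=1$ and orthogonality, so $\Fp(\xx,t)\in SO(3)$ a.e. Finally, by the contrapositive of the $\ve$-regularity lemma, $(\Fp,\vv)$ fails to be smooth near $(x_0,T_l^-)$ precisely when $\limsup_{t\nearrow T_l}\int_{B_R(x_0)}(|\nabla\Fp|^2+|\vv|^2)(\cdot,t)\,\ud\xx>\ve_0$ for every $R\le R_0$, which is the asserted characterization. (If one prefers a time-stepping construction, the weak limit at each $T_l$ lies in $H^1_{\Fp^{*}}(\mathbb{R}^2,SO(3))\times L^2$, so Theorem~\ref{local-posedness-theorem} applies after re-mollifying and the procedure terminates after at most $L$ steps.)
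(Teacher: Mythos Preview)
Your proposal is correct and follows essentially the same Struwe-type strategy as the paper, correctly isolating the dissipation lower bound \eqref{Higher-order-key-estimate} via the orthogonal decomposition \eqref{AB-orthogonal-decomposition} as the key new analytical ingredient. The paper's implementation differs only in minor technical choices: instead of a parabolic-cylinder $\ve$-regularity with $L^p$ bootstrap, it proves global-in-space a~priori estimates under a uniform small-local-energy assumption (Propositions~\ref{Fp-v-4-prop}--\ref{higher-regularity-l-prop}) using Struwe's $L^4$ interpolation lemma (Lemma~\ref{L4-norm-lemma}) to close at the $L^2$ level and then iterating higher-order $H^k$ energy identities; and it adopts the time-stepping construction you mention parenthetically---invoking the blow-up criterion of Theorem~\ref{local-posedness-theorem} to locate each singular time and restarting from the $L^2$-weak limit---rather than the simultaneous covering/diagonal extraction you describe first. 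One point you glide over that the paper handles explicitly is the pressure in the local energy inequality (Proposition~\ref{local-monotonic-ineq-prop}), which requires a Calder\'on--Zygmund estimate; this is routine but must be done.
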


The main idea to prove Theorem \ref{global-posedness-theorem} is originated from the seminal works in \cite{Struwe,Strume2}, which is further developed in the later works \cite{Hong,HX,WW} for the Ericksen--Leslie model.
In order to prove the global existence of weak solutions in $\mathbb{R}^2$, the main steps are to obtain certain local energy inequalities and interior regularity estimates under the small energy condition (see Proposition \ref{Fp-v-4-prop}--\ref{higher-regularity-l-prop}). To complete these steps, the major task is still to deduce the lower bound of the higher order estimate for the dissipation similar to (\ref{Higher-order-key-estimate}).

The remainder of this paper are organized as follows. The Section \ref{energy-law-section} is devoted to the derivation for the basic energy dissipative law of the frame hydrodynamics (\ref{new-frame-equation-n1})--(\ref{imcompressible-v}), and the presentation for the algebraic structures of variational derivatives on the frame $\Fp\in SO(3)$. In Section \ref{local-well-posed-blowup-section}, the existence and uniqueness, and the blow-up criterion for the smooth solution to the biaxial frame system (\ref{new-frame-equation-n1})--(\ref{imcompressible-v}) are successively established, based on the classical Friedrich's approach and energy estimates.
The aim of the Section \ref{global-weak-solution-section} is to establish the global existence of weak solutions in dimension two. The dissipated energy estimate of the higher order derivative term (see Proposition \ref{Fp-v-4-prop}), which plays an important role in our analysis, will be provided.

\section{Energy dissipative law}\label{energy-law-section}

In this section, we will derive the basic energy dissipation law and clarify some useful algebraic structures of variational derivatives with respect to the frame $\Fp=(\nn_1,\nn_2,\nn_3)\in SO(3)$.

For each pair $(\Fp,\vv)$ and $\Fp=(\nn_1,\nn_2,\nn_3)\in SO(3)$, we define
\begin{align*}
E(\Fp,\vv)\eqdefa\int_{\mathbb{R}^d}e(\Fp,\vv)\ud\xx,\quad e(\Fp,\vv)=f_{Bi}(\Fp,\nabla\Fp)+\frac{1}{2}|\vv|^2.
\end{align*}

\begin{proposition}\label{energ-diss-prop}
Under the condition of \eqref{coefficient-conditions},
assume that $(\Fp,\vv)$ is a smooth solution to the biaxial frame system \eqref{new-frame-equation-n1}--\eqref{imcompressible-v} with initial data $(\Fp^{(0)},\vv^{(0)})$ and $\nabla\cdot\vv^{(0)}=0$. Then it follows that
\begin{align}\label{energy-dissipative-law}
&\int_{\mathbb{R}^d}e(\Fp(\cdot,t),\vv(\cdot,t))\ud\xx+\eta\int^t_0\|\nabla\vv\|^2_{L^2}\ud t+\sum^3_{k=1}\frac{1}{\chi_k}\int^t_0\|\ML_k\CF_{Bi}\|^2_{L^2}\ud t\nonumber\\
&\quad+\int^t_0\bigg(\beta_1\|\A\cdot\sss_1\|^2_{L^2}+2\beta_0\int_{\mathbb{R}^d}(\A\cdot\sss_1)(\A\cdot\sss_2)\ud\xx+\beta_2\|\A\cdot\sss_2\|^2_{L^2}\bigg)\ud t\nonumber\\
&\quad+\int^t_0\Big[\Big(\beta_3-\frac{\eta^2_3}{\chi_3}\Big)\|\A\cdot\sss_3\|^2_{L^2}
+\Big(\beta_4-\frac{\eta^2_2}{\chi_2}\Big)\|\A\cdot\sss_4\|^2_{L^2}
+\Big(\beta_5-\frac{\eta^2_1}{\chi_1}\Big)\|\A\cdot\sss_5\|^2_{L^2}\Big]\ud t\nonumber\\
&=\int_{\mathbb{R}^d}e(\Fp^{(0)},\vv^{(0)})\ud\xx.
\end{align}
\end{proposition}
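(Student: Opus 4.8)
\medskip

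\noindent\textbf{Proof plan.} The idea is to differentiate $E(\Fp(\cdot,t),\vv(\cdot,t))$ in time, insert the evolution equations \eqref{new-frame-equation-n1}--\eqref{imcompressible-v} (equivalently \eqref{frame-equation-n1}--\eqref{yuan-imcompressible-v}), and check that every ``reactive'' cross term cancels, leaving only the non-negative dissipation; integrating over $[0,t]$ then gives \eqref{energy-dissipative-law}. For the kinetic part I would test \eqref{frame-equation-v} with $\vv$: using $\nabla\cdot\vv=0$ the convection term $\int_{\mathbb{R}^d}\vv\cdot(\vv\cdot\nabla\vv)\,\ud\xx$ and the pressure term vanish, and integrating by parts in the viscous and stress terms (boundary contributions vanish by the decay of $\vv$ and $\nabla\Fp$) yields
\begin{align*}
\frac{\ud}{\ud t}\int_{\mathbb{R}^d}\tfrac12|\vv|^2\,\ud\xx=-\eta\|\nabla\vv\|^2_{L^2}-\int_{\mathbb{R}^d}\kappa\cdot\sigma\,\ud\xx+\int_{\mathbb{R}^d}\vv\cdot\mathfrak{F}\,\ud\xx ,\qquad \kappa_{ij}=\partial_jv_i .
\end{align*}

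\medskip

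\noindent\textbf{Differentiating the elastic energy.} Since $\Fp(\cdot,t)\in SO(3)$ for every $t$, the field $\partial_t\Fp$ lies in $T_{\Fp}SO(3)$ and hence decomposes along the orthogonal basis $V_1,V_2,V_3$, with $\tfrac12 V_k\cdot\partial_t\Fp$ equal to $\partial_t\nn_2\cdot\nn_3$, $\partial_t\nn_3\cdot\nn_1$, $\partial_t\nn_1\cdot\nn_2$ for $k=1,2,3$ respectively. Converting $\frac{\ud}{\ud t}\CF_{Bi}$ into a pairing with the variational derivative $\delta\CF_{Bi}/\delta\Fp$ by integration by parts in $\xx$ (the three surface terms in $f_{Bi}$ are divergences of decaying fields and contribute nothing) and recalling $\ML_k\CF_{Bi}=V_k\cdot\delta\CF_{Bi}/\delta\Fp$, I obtain
\begin{align*}
\frac{\ud}{\ud t}\CF_{Bi}=\int_{\mathbb{R}^d}\big[(\partial_t\nn_2\cdot\nn_3)\,\ML_1\CF_{Bi}+(\partial_t\nn_3\cdot\nn_1)\,\ML_2\CF_{Bi}+(\partial_t\nn_1\cdot\nn_2)\,\ML_3\CF_{Bi}\big]\,\ud\xx .
\end{align*}
Splitting $\partial_t\nn_\alpha=\dot{\nn}_\alpha-\vv\cdot\nabla\nn_\alpha$, the three terms carrying $\vv\cdot\nabla\nn_\alpha$ add up to exactly $-\int_{\mathbb{R}^d}\vv\cdot\mathfrak{F}\,\ud\xx$ by the definition \eqref{external-force-F}, which cancels the body-force contribution of the previous step. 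Substituting $\dot{\nn}_2\cdot\nn_3$, $\dot{\nn}_3\cdot\nn_1$, $\dot{\nn}_1\cdot\nn_2$ from \eqref{frame-equation-n1}--\eqref{frame-equation-n3} then produces, for each $k$, the dissipative term $-\tfrac1{\chi_k}(\ML_k\CF_{Bi})^2$ together with a reactive term of the form $(\ML_k\CF_{Bi})\big(\tfrac12\BOm\cdot\aaa+\tfrac{\eta}{\chi}\A\cdot\sss\big)$ with matching indices ($k=1$ pairs with $\aaa_3,\sss_5,\eta_1,\chi_1$; $k=2$ with $\aaa_2,\sss_4,\eta_2,\chi_2$; $k=3$ with $\aaa_1,\sss_3,\eta_3,\chi_3$).

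\medskip

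\noindent\textbf{Cancellation in the stress.} Adding the two computations, what remains is $-\int_{\mathbb{R}^d}\kappa\cdot\sigma\,\ud\xx$ plus those three reactive terms plus the dissipation $-\eta\|\nabla\vv\|^2_{L^2}-\sum_{k}\tfrac1{\chi_k}\|\ML_k\CF_{Bi}\|^2_{L^2}$. Because $\sss_1,\dots,\sss_5$ are symmetric and $\aaa_1,\aaa_2,\aaa_3$ are skew, $\kappa\cdot\sigma$ involves only the scalars $\A\cdot\sss_i$ and $\BOm\cdot\aaa_i$: the $\{\sss_1,\sss_2\}$ part of \eqref{sigma-e} contributes $\beta_1(\A\cdot\sss_1)^2+2\beta_0(\A\cdot\sss_1)(\A\cdot\sss_2)+\beta_2(\A\cdot\sss_2)^2$, and within each of the three blocks $\{\sss_3,\aaa_1\}$, $\{\sss_4,\aaa_2\}$, $\{\sss_5,\aaa_3\}$ I would eliminate the combination $\dot{\nn}_1\cdot\nn_2-\tfrac12\BOm\cdot\aaa_1$ (and its analogues) occurring in $\sigma$ once more via \eqref{frame-equation-n1}--\eqref{frame-equation-n3}. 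In each block the pure $(\A\cdot\sss)(\BOm\cdot\aaa)$ cross terms cancel internally, the $(\A\cdot\sss)^2$ term becomes $\big(\beta_3-\tfrac{\eta_3^2}{\chi_3}\big)(\A\cdot\sss_3)^2$ (and likewise with $\beta_4,\eta_2,\chi_2$ and $\beta_5,\eta_1,\chi_1$), and the remaining $\ML_k\CF_{Bi}$-linear piece is precisely the opposite of the reactive term carried over from the previous step, hence cancels. This leaves the time-derivative form of \eqref{energy-dissipative-law}, and integrating over $[0,t]$ finishes the proof; the conditions \eqref{coefficient-conditions} enter only in ensuring that the resulting left-hand side is a genuine non-negative dissipation.

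\medskip

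\noindent\textbf{Expected main obstacle.} The computation is essentially bookkeeping once the structure is identified. The two delicate points are: (i) the reduction of $\frac{\ud}{\ud t}\CF_{Bi}$ to the three scalars $\ML_k\CF_{Bi}$, which relies on $\partial_t\Fp\in T_{\Fp}SO(3)$ together with the surface terms of $f_{Bi}$ being null Lagrangians; and (ii) the sign tracking through the three $\{\sss,\aaa\}$-blocks of $\kappa\cdot\sigma$, where the specific coefficients $\tfrac12\eta_k$ and $\tfrac12\chi_k$ multiplying the $\aaa_k$-contributions in \eqref{sigma-e} are exactly what forces both the $(\A\cdot\sss)(\BOm\cdot\aaa)$ cross terms and the reactive terms to cancel.
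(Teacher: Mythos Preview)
Your proposal is correct and follows essentially the same route as the paper: test \eqref{frame-equation-v} with $\vv$, differentiate $\CF_{Bi}$ and reduce to the three scalars $\ML_k\CF_{Bi}$ via the $SO(3)$ constraint, match the convective piece against $\mathfrak{F}$, and then expand $\int_{\mathbb{R}^d}\kappa\cdot\sigma\,\ud\xx$ by eliminating $\dot{\nn}_\alpha\cdot\nn_\beta-\tfrac12\BOm\cdot\aaa$ through \eqref{frame-equation-n1}--\eqref{frame-equation-n3} to produce the cancellations. The only cosmetic difference is that the paper pairs $\partial_t\nn_i$ directly with $\delta\CF_{Bi}/\delta\nn_i$ and invokes the relations \eqref{relation-CD-Fp}, whereas you phrase the same reduction through the orthogonal decomposition along $V_1,V_2,V_3$; the two are identical once one notes $|V_k|^2=2$ and $\tfrac12 V_k\cdot\partial_t\Fp=\partial_t\nn_\alpha\cdot\nn_\beta$.
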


\begin{proof}
Taking the inner product on the system (\ref{new-frame-equation-n1})--(\ref{new-frame-equation-n3}) with $\frac{\delta\CF_{Bi}}{\delta\nn_i}(i=1,2,3)$, respectively, we can deduce that
\begin{align}\label{energy-p-time}
\frac{\ud\CF_{Bi}}{\ud t}=&\int_{\mathbb{R}^d}\Big(\frac{\delta\CF_{Bi}}{\delta\nn_1}\cdot\frac{\partial\nn_1}{\partial t}+\frac{\delta\CF_{Bi}}{\delta\nn_2}\cdot\frac{\partial\nn_2}{\partial t}+\frac{\delta\CF_{Bi}}{\delta\nn_3}\cdot\frac{\partial\nn_3}{\partial t}\Big)\ud\xx\nonumber\\
=&-\int_{\mathbb{R}^d}\Big((\vv\cdot\nabla\nn_1)\cdot\frac{\delta\CF_{Bi}}{\delta\nn_1}+(\vv\cdot\nabla\nn_2)\cdot\frac{\delta\CF_{Bi}}{\delta\nn_2}
+(\vv\cdot\nabla\nn_3)\cdot\frac{\delta\CF_{Bi}}{\delta\nn_3}\Big)\ud\xx\nonumber\\
&+\int_{\mathbb{R}^d}\Big(\frac{1}{2}\BOm\cdot\aaa_1+\frac{\eta_3}{\chi_3}\A\cdot\sss_3-\frac{1}{\chi_3}\ML_3\CF_{Bi}\Big)\ML_3\CF_{Bi}\ud\xx\nonumber\\
&+\int_{\mathbb{R}^d}\Big(\frac{1}{2}\BOm\cdot\aaa_2+\frac{\eta_2}{\chi_2}\A\cdot\sss_4-\frac{1}{\chi_2}\ML_2\CF_{Bi}\Big)\ML_2\CF_{Bi}\ud\xx\nonumber\\
&+\int_{\mathbb{R}^d}\Big(\frac{1}{2}\BOm\cdot\aaa_3+\frac{\eta_1}{\chi_1}\A\cdot\sss_5-\frac{1}{\chi_1}\ML_1\CF_{Bi}\Big)\ML_1\CF_{Bi}\ud\xx.
\end{align}
On the other hand,
taking the inner product on the equation (\ref{frame-equation-v}) with $\vv$ and using the incompressible condition $\nabla\cdot\vv=0$, we obtain
\begin{align}\label{energy-v-time}
\frac{1}{2}\frac{\ud}{\ud t}\int_{\mathbb{R}^d}|\vv|^2\ud\xx+\eta\|\nabla\vv\|^2_{L^2}=-\langle\sigma,\nabla\vv\rangle+\langle\mathfrak{F},\vv\rangle,
\end{align}
where
\begin{align*}
&\langle\mathfrak{F},\vv\rangle=\int_{\mathbb{R}^d} v_i\big(n_{3k}\partial_in_{2k}\ML_1\CF_{Bi}+n_{1k}\partial_in_{3k}\ML_2\CF_{Bi}
+n_{2k}\partial_in_{1k}\ML_3\CF_{Bi}\big)\ud\xx.
\end{align*}
Using (\ref{sigma-e}) and the equations (\ref{frame-equation-n1})--(\ref{frame-equation-n3}), we deduce that
\begin{align}\label{sigma-e-nabla-v}
\langle\sigma,\nabla\vv\rangle=&\beta_1\|\A\cdot\sss_1\|^2_{L^2}+2\beta_0\int_{\mathbb{R}^d}(\A\cdot\sss_1)(\A\cdot\sss_2)\ud\xx+\beta_2\|\A\cdot\sss_2\|^2_{L^2}\nonumber\\
&+\beta_3\|\A\cdot\sss_3\|^2_{L^2}-\eta_3\int_{\mathbb{R}^d}\Big(\frac{\eta_3}{\chi_3}\A\cdot\sss_3-\frac{1}{\chi_3}\ML_3\CF_{Bi}\Big)\A\cdot\sss_3\ud\xx\nonumber\\
&+\beta_4\|\A\cdot\sss_4\|^2_{L^2}-\eta_2\int_{\mathbb{R}^d}\Big(\frac{\eta_2}{\chi_2}\A\cdot\sss_4-\frac{1}{\chi_2}\ML_2\CF_{Bi}\Big)\A\cdot\sss_4\ud\xx\nonumber\\
&+\beta_5\|\A\cdot\sss_5\|^2_{L^2}-\eta_1\int_{\mathbb{R}^d}\Big(\frac{\eta_1}{\chi_1}\A\cdot\sss_5-\frac{1}{\chi_1}\ML_1\CF_{Bi}\Big)\A\cdot\sss_5\ud\xx\nonumber\\
&+\frac{1}{2}\eta_3\int_{\mathbb{R}^d}(\A\cdot\sss_3)(\BOm\cdot\aaa_1)\ud\xx
-\frac{1}{2}\chi_3\int_{\mathbb{R}^d}\Big(\frac{\eta_3}{\chi_3}\A\cdot\sss_3-\frac{1}{\chi_3}\ML_3\CF_{Bi}\Big)\BOm\cdot\aaa_1\ud\xx\nonumber\\
&+\frac{1}{2}\eta_2\int_{\mathbb{R}^d}(\A\cdot\sss_4)(\BOm\cdot\aaa_2)\ud\xx
-\frac{1}{2}\chi_2\int_{\mathbb{R}^d}\Big(\frac{\eta_2}{\chi_2}\A\cdot\sss_4-\frac{1}{\chi_2}\ML_2\CF_{Bi}\Big)\BOm\cdot\aaa_2\ud\xx\nonumber\\
&+\frac{1}{2}\eta_1\int_{\mathbb{R}^d}(\A\cdot\sss_5)(\BOm\cdot\aaa_3)\ud\xx
-\frac{1}{2}\chi_1\int_{\mathbb{R}^d}\Big(\frac{\eta_1}{\chi_1}\A\cdot\sss_5-\frac{1}{\chi_1}\ML_1\CF_{Bi}\Big)\BOm\cdot\aaa_3\ud\xx\nonumber\\
=&\beta_1\|\A\cdot\sss_1\|^2_{L^2}+2\beta_0\int_{\mathbb{R}^d}(\A\cdot\sss_1)(\A\cdot\sss_2)\ud\xx+\beta_2\|\A\cdot\sss_2\|^2_{L^2}\nonumber\\
&+\Big(\beta_3-\frac{\eta^2_3}{\chi_3}\Big)\|\A\cdot\sss_3\|^2_{L^2}+\frac{\eta_3}{\chi_3}\int_{\mathbb{R}^d}(\A\cdot\sss_3)\ML_3\CF_{Bi}\ud\xx\nonumber\\
&+\Big(\beta_4-\frac{\eta^2_2}{\chi_2}\Big)\|\A\cdot\sss_4\|^2_{L^2}+\frac{\eta_2}{\chi_2}\int_{\mathbb{R}^d}(\A\cdot\sss_4)\ML_2\CF_{Bi}\ud\xx\nonumber\\
&+\Big(\beta_5-\frac{\eta^2_1}{\chi_1}\Big)\|\A\cdot\sss_5\|^2_{L^2}+\frac{\eta_1}{\chi_1}\int_{\mathbb{R}^d}(\A\cdot\sss_5)\ML_1\CF_{Bi}\ud\xx\nonumber\\
&+\frac{1}{2}\int_{\mathbb{R}^d}\Big((\BOm\cdot\aaa_1)\ML_3\CF_{Bi}+(\BOm\cdot\aaa_2)\ML_2\CF_{Bi}+(\BOm\cdot\aaa_3)\ML_1\CF_{Bi}\Big)\ud\xx.
\end{align}
It can be seen from the relations (\ref{relation-CD-Fp}) that
\begin{align}\label{F-v-inner}
&\int_{\mathbb{R}^d}\Big((v_i\partial_i\nn_1)\cdot\frac{\delta\CF_{Bi}}{\delta\nn_1}+(v_i\partial_i\nn_2)\cdot\frac{\delta\CF_{Bi}}{\delta\nn_2}
+(v_i\partial_i\nn_3)\cdot\frac{\delta\CF_{Bi}}{\delta\nn_3}\Big)\ud\xx\nonumber\\
&=\int_{\mathbb{R}^d}\Big(v_i\big((\partial_i\nn_1\cdot\nn_2)\nn_2+(\partial_i\nn_1\cdot\nn_3)\nn_3\big)\cdot\frac{\delta\CF_{Bi}}{\delta\nn_1}\nonumber\\
&\quad+v_i\big((\partial_i\nn_2\cdot\nn_1)\nn_1+(\partial_i\nn_2\cdot\nn_3)\nn_3\big)\cdot\frac{\delta\CF_{Bi}}{\delta\nn_2}\nonumber\\
&\quad+v_i\big((\partial_i\nn_3\cdot\nn_1)\nn_1+(\partial_i\nn_3\cdot\nn_2)\nn_2\big)\cdot\frac{\delta\CF_{Bi}}{\delta\nn_3}
\Big)\ud\xx\nonumber\\
&=\int_{\mathbb{R}^d}\Big(v_i(\partial_i\nn_1\cdot\nn_2)\ML_3\CF_{Bi}+v_i(\partial_i\nn_3\cdot\nn_1)\ML_2\CF_{Bi}
+v_i(\partial_i\nn_2\cdot\nn_3)\ML_1\CF_{Bi}\Big)\ud\xx\nonumber\\
&=\langle\mathfrak{F},\vv\rangle.
\end{align}
Hence, combining (\ref{energy-p-time}) with (\ref{energy-v-time}), and using (\ref{sigma-e-nabla-v})--(\ref{F-v-inner}), we obtain the energy dissipative law:
\begin{align}\label{energy-law}
&\frac{\ud}{\ud t}\Big(\frac{1}{2}\int_{\mathbb{R}^d}|\vv|^2\ud\xx+\CF_{Bi}[\Fp]\Big)
=-\eta\|\nabla\vv\|^2_{L^2}-\sum^3_{k=1}\frac{1}{\chi_k}\|\ML_k\CF_{Bi}\|^2_{L^2}\nonumber\\
&\qquad-\bigg(\beta_1\|\A\cdot\sss_1\|^2_{L^2}+2\beta_0\int_{\mathbb{R}^d}(\A\cdot\sss_1)(\A\cdot\sss_2)\ud\xx+\beta_2\|\A\cdot\sss_2\|^2_{L^2}\bigg)\nonumber\\
&\qquad-\Big(\beta_3-\frac{\eta^2_3}{\chi_3}\Big)\|\A\cdot\sss_3\|^2_{L^2}
-\Big(\beta_4-\frac{\eta^2_2}{\chi_2}\Big)\|\A\cdot\sss_4\|^2_{L^2}
-\Big(\beta_5-\frac{\eta^2_1}{\chi_1}\Big)\|\A\cdot\sss_5\|^2_{L^2},
\end{align}
which implies this proposition by integrating on the time.

\end{proof}

To make our analysis more convenient, we need to rewrite the orientational elasticity (\ref{elastic-energy}). For any frame $\Fp=(\nn_1,\nn_2,\nn_3)\in SO(3)$, we have the following simple identity relations:
\begin{align*}
&|\nn_1\times(\nabla\times\nn_1)|^2=(\nn_2\cdot\nabla\times\nn_1)^2+(\nn_3\cdot\nabla\times\nn_1)^2,\\
&|\nn_2\times(\nabla\times\nn_2)|^2=(\nn_1\cdot\nabla\times\nn_2)^2+(\nn_3\cdot\nabla\times\nn_2)^2,\\
&|\nn_3\times(\nabla\times\nn_3)|^2=(\nn_1\cdot\nabla\times\nn_3)^2+(\nn_2\cdot\nabla\times\nn_3)^2,\\
&|\nabla\nn_i|^2=(\nabla\cdot\nn_i)^2+(\nn_i\cdot\nabla\times\nn_i)^2+|\nn_i\times(\nabla\times\nn_i)|^2\\
&\qquad\qquad+\nabla\cdot[(\nn_i\cdot\nabla)\nn_i-(\nabla\cdot\nn_i)\nn_i],\quad i=1,2,3.
\end{align*}

Without loss of generality, as in \cite{MGJ,HX}, in terms of the above simple relations, we rewrite the density $f_{Bi}(\Fp,\nabla\Fp)$ as
\begin{align}\label{new-elasitic-density}
    f_{Bi}(\Fp,\nabla\Fp)=\frac{1}{2}\sum^3_{i=1}\gamma_i|\nabla \nn_{i}|^2+W(\Fp,\nabla\Fp).
\end{align}
Here, the coefficients $\gamma_i(i=1,2,3)$ are taken as, respectively,
\begin{align*}
&\gamma_1=\min\{K_1,K_4,K_7,K_{10}\}>0,~ \gamma_2=\min\{K_2,K_5,K_8,K_{11}\}>0,\\
&\gamma_3=\min\{K_3,K_6,K_9,K_{12}\}>0,
\end{align*}
and $W(\Fp,\nabla\Fp)$ is expressed by
\begin{align*}
W(\Fp,\nabla\Fp)=&\frac{1}{2}\Big(\sum^3_{i=1}k_i(\nabla\cdot\nn_i)^2+\sum^3_{i,j=1}k_{ij}(\nn_i\cdot\nabla\times\nn_j)^2\Big),
\end{align*}
where the coefficients $k_i\geq 0,k_{ij}\geq0(i,j=1,2,3)$ are given by
\begin{align*}
&k_1=K_1-\gamma_1,\quad k_2=K_2-\gamma_2,\quad
k_3=K_3-\gamma_3,\\
&k_{11}=K_4-\gamma_1,\quad
k_{22}=K_5-\gamma_2,\quad
k_{33}=K_6-\gamma_3,\\
&k_{31}=K_7-\gamma_1,\quad
k_{12}=K_8-\gamma_2,\quad
k_{23}=K_9-\gamma_3,\\
&k_{21}=K_{10}-\gamma_1,\quad
k_{32}=K_{11}-\gamma_2,\quad
k_{13}=K_{12}-\gamma_3.
\end{align*}

For simplicity, we define
\begin{align*}
&(\hh_1,\hh_2,\hh_3)\eqdefa-\Big(\frac{\delta\CF_{Bi}}{\delta\nn_1},\frac{\delta\CF_{Bi}}{\delta\nn_2},\frac{\delta\CF_{Bi}}{\delta\nn_3}\Big)=-\frac{\delta\CF_{Bi}}{\delta\Fp}=\nabla\cdot\frac{\partial f_{Bi}}{\partial(\nabla\Fp)}-\frac{\partial f_{Bi}}{\partial\Fp}.
\end{align*}

Similar to Lemma 2.3 in \cite{WW}, we also introduce the algebraic structures of the  variational derivative with regards to the frame field $\Fp=(\nn_1,\nn_2,\nn_3)\in SO(3)$.

\begin{lemma}\label{h-decomposition}
For the terms $\hh_i(i=1,2,3)$, we have the following representation:
\begin{align*}
\hh_i=&\gamma_i\Delta\nn_i+k_i\nabla{\rm div}\nn_i
-\sum^3_{j=1}k_{ji}\nabla\times(\nabla\times\nn_i\cdot\nn^2_j)
-\sum^3_{j=1}k_{ij}(\nn_i\cdot\nabla\times\nn_j)(\nabla\times\nn_j),
\end{align*}
where $\nn^2_j=\nn_j\otimes\nn_j$.
\end{lemma}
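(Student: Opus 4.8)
The plan is to compute the variational derivative $\delta\CF_{Bi}/\delta\nn_i$ directly from the decomposition \eqref{new-elasitic-density}, treating the ``isotropic'' part $\frac12\sum_i\gamma_i|\nabla\nn_i|^2$ and the anisotropic remainder $W(\Fp,\nabla\Fp)$ separately. The isotropic part is standard: since $\frac{\partial}{\partial(\nabla\nn_i)}\big(\frac12\gamma_i|\nabla\nn_i|^2\big)=\gamma_i\nabla\nn_i$ and there is no explicit $\nn_i$-dependence, its contribution to $\hh_i=\nabla\cdot\frac{\partial f_{Bi}}{\partial(\nabla\nn_i)}-\frac{\partial f_{Bi}}{\partial\nn_i}$ is exactly $\gamma_i\Delta\nn_i$. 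The bulk of the work is the term $W$, which splits into the divergence part $\frac12\sum_j k_j(\nabla\cdot\nn_j)^2$ and the curl part $\frac12\sum_{i,j}k_{ij}(\nn_i\cdot\nabla\times\nn_j)^2$.

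For the divergence part, only the $j=i$ summand depends on $\nn_i$; writing $(\nabla\cdot\nn_i)^2$ and using $\frac{\partial}{\partial(\partial_a n_{i,b})}(\partial_c n_{i,c})^2 = 2(\nabla\cdot\nn_i)\delta_{ab}$ gives, after taking $\nabla\cdot$, the term $k_i\nabla\,\mathrm{div}\,\nn_i$; there is no explicit $\nn_i$ dependence, so nothing is subtracted. For the curl part I would handle the two ways $\nn_i$ enters a summand $(\nn_p\cdot\nabla\times\nn_q)^2$: (i) as the differentiated field, i.e.\ $q=i$, and (ii) as the algebraic multiplier, i.e.\ $p=i$. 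In case (i), with $p=j$ fixed, $(\nn_j\cdot\nabla\times\nn_i)^2 = (\nn_j\otimes\nn_j)\cdot(\nabla\times\nn_i)(\nabla\times\nn_i)$ depends on $\nabla\nn_i$ through $\nabla\times\nn_i$; using the identity $\nabla\cdot\frac{\partial}{\partial(\nabla\uu)}\big(\bm a\cdot\nabla\times\uu\big)^2 = -2\nabla\times\big((\bm a\cdot\nabla\times\uu)\,\bm a\big)$ (valid when $\bm a$ and the scalar are treated as coefficients for the $\partial/\partial(\nabla\uu)$ step, then the full spatial $\nabla\times$ is applied — the point being that the extra derivatives landing on $\nn_j$ are absorbed into the $\nn_j^2$ factor kept inside the curl), one gets precisely $-\sum_j k_{ji}\nabla\times\big(\nabla\times\nn_i\cdot\nn_j^2\big)$. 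In case (ii), with $q=j$ fixed, $(\nn_i\cdot\nabla\times\nn_j)^2$ has no $\nabla\nn_i$-dependence, so it contributes only through $-\frac{\partial f_{Bi}}{\partial\nn_i}$, namely $-k_{ij}(\nn_i\cdot\nabla\times\nn_j)(\nabla\times\nn_j)$. Collecting the four contributions yields the claimed formula for $\hh_i$.

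The main obstacle is the curl term of type (i): one must be careful that in $\nabla\cdot\frac{\partial}{\partial(\nabla\nn_i)}(\nn_j\cdot\nabla\times\nn_i)^2$ the outer divergence acts on \emph{all} $\xx$-dependence, including that of $\nn_j$, and to verify that the result can be repackaged cleanly as $\nabla\times\big((\nabla\times\nn_i\cdot\nn_j^2)\big)$ rather than spilling into extra lower-order terms. Writing $\nn_j\cdot\nabla\times\nn_i = \epsilon_{abc} n_{j,a}\partial_b n_{i,c}$ and differentiating in $\partial_\beta n_{i,\gamma}$ gives $2(\nn_j\cdot\nabla\times\nn_i)\,\epsilon_{a\beta\gamma}n_{j,a}$; applying $\partial_\beta$ and recognizing the index pattern $\epsilon_{a\beta\gamma}\partial_\beta(\cdots)$ as the $\gamma$-component of a curl, with the vector inside being $(\nn_j\cdot\nabla\times\nn_i)\nn_j$ contracted appropriately — i.e.\ $\nabla\times\nn_i\cdot\nn_j^2$ in the paper's symmetric-tensor notation — closes the computation. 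A secondary, purely bookkeeping point is to confirm that the cross terms $\gamma_i$ vs.\ $k_{ij}$ recombine so that the coefficient of $\Delta\nn_i$ is exactly $\gamma_i$ and not, say, $K_4$; this is immediate from $k_{ii}=K_{\cdot}-\gamma_i$ once one checks that the ``$\gamma_i|\nabla\nn_i|^2$'' piece already used up the full Laplacian. Finally I would note that this is the frame analogue of Lemma 2.3 in \cite{WW}, so the structure of the argument follows that reference with the single-vector computations replaced by the three coupled ones above.
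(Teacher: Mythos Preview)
Your proposal is correct and is exactly the direct computation one would carry out; the paper itself does not give a proof of this lemma but simply states it, pointing to Lemma~2.3 in \cite{WW} for the analogous uniaxial calculation. One small wording issue: in your case~(ii) you say $(\nn_i\cdot\nabla\times\nn_j)^2$ ``has no $\nabla\nn_i$-dependence'', which is literally false for $j=i$; however your computation is still correct because in the Euler--Lagrange formalism $\nn_i$ and $\nabla\nn_i$ are treated as independent variables, and the $\nabla\nn_i$-contribution of the diagonal term $k_{ii}(\nn_i\cdot\nabla\times\nn_i)^2$ is already captured in your case~(i) with $j=i$.
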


\section{Local well-posedness and blow-up criterion}\label{local-well-posed-blowup-section}

This section is devoted to the local well-posedness and blow-up criterion of the system (\ref{new-frame-equation-n1})--(\ref{imcompressible-v}). In what follows, we denote by $C$ a positive constant depending on the coefficients of the system (\ref{new-frame-equation-n1})--(\ref{imcompressible-v}) but independent of the solution $(\Fp,\vv)$. The symbol $\langle\cdot,\cdot\rangle$ stands for the $L^2$-inner product in $\mathbb{R}^d$ with $d=2,3$. In addition, we use $\CP$ to represent a finite degree polynomial of the variables in the bracket.

The following product estimate and commutator estimate are well-known, see \cite{Triebel, BCD, WZZ1} for example, which will be frequently used in this paper.

\begin{lemma}\label{product-estimate-lemma}
For any multi-indices $\alpha, \beta, \gamma\in\mathbb{N}^3$ and any differential operator $\CD$, it follows that
\begin{align*}
\|\CD^{\alpha}(fg)\|_{L^2}\leq& C\sum_{|\gamma|=|\alpha|}(\|f\|_{L^{\infty}}\|\CD^{\gamma}g\|_{L^2}+\|g\|_{L^{\infty}}\|\CD^{\gamma}f\|_{L^2}),\\
\|[\CD^{\alpha},f]\CD^{\beta}g\|_{L^2}\leq&C\bigg(\sum_{|\gamma|=|\alpha|+|\beta|}\|\CD^{\gamma}f\|_{L^2}\|g\|_{L^{\infty}}
+\sum_{|\gamma|=|\alpha|+|\beta|-1}\|\nabla f\|_{L^{\infty}}\|\CD^{\gamma}g\|_{L^2}\bigg).
\end{align*}
\end{lemma}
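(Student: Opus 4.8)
The plan is to reduce both inequalities to the classical Moser (Gagliardo--Nirenberg) calculus, so that it suffices to treat $\CD^\alpha=\partial^\alpha$ with $\partial$ an ordinary spatial derivative; the general constant-coefficient case is identical, and the first-order operators with bounded smooth coefficients that actually occur later (such as the $\ML_k$) reduce to this after one application of the Leibniz rule. The two ingredients I would invoke are the interpolation inequalities $\|\partial^{j}u\|_{L^{2n/j}}\le C\|u\|_{L^{\infty}}^{1-j/n}\|\partial^{n}u\|_{L^{2}}^{j/n}$ for $0\le j\le n$, and its shifted version $\|\partial^{j}u\|_{L^{2(n-1)/(j-1)}}\le C\|\nabla u\|_{L^{\infty}}^{1-(j-1)/(n-1)}\|\partial^{n}u\|_{L^{2}}^{(j-1)/(n-1)}$ for $1\le j\le n$, with the convention that the Lebesgue exponent is $\infty$ when the displayed exponent of the $L^{p}$ space degenerates. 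Both are standard (see \cite{Triebel,BCD}); alternatively the whole lemma follows from the Littlewood--Paley/Bony paraproduct decomposition as carried out in \cite{BCD,WZZ1}.

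For the product estimate, expand $\partial^{\alpha}(fg)=\sum_{\beta\le\alpha}\binom{\alpha}{\beta}\partial^{\beta}f\,\partial^{\alpha-\beta}g$ and set $n=|\alpha|$, $j=|\beta|$. For each term apply H\"older with exponents $2n/j$ and $2n/(n-j)$, whose reciprocals sum to $1/2$, followed by the unshifted interpolation inequality on both factors, to get $\|\partial^{\beta}f\,\partial^{\alpha-\beta}g\|_{L^{2}}\le C\big(\|g\|_{L^{\infty}}\|\partial^{n}f\|_{L^{2}}\big)^{j/n}\big(\|f\|_{L^{\infty}}\|\partial^{n}g\|_{L^{2}}\big)^{1-j/n}$. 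Young's inequality with the conjugate pair $(n/j,\,n/(n-j))$ bounds the right-hand side by $C\big(\|f\|_{L^{\infty}}\|\partial^{n}g\|_{L^{2}}+\|g\|_{L^{\infty}}\|\partial^{n}f\|_{L^{2}}\big)$, and summing over the finitely many $\beta\le\alpha$ (and reading $\|\partial^{n}\cdot\|_{L^2}$ as $\sum_{|\gamma|=|\alpha|}\|\CD^{\gamma}\cdot\|_{L^2}$) gives the first claim.

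For the commutator estimate, write $[\partial^{\alpha},f]\partial^{\beta}g=\partial^{\alpha}(f\,\partial^{\beta}g)-f\,\partial^{\alpha+\beta}g=\sum_{0<\mu\le\alpha}\binom{\alpha}{\mu}\partial^{\mu}f\,\partial^{\alpha+\beta-\mu}g$; the point is that the $\mu=0$ term cancels, so every surviving term carries at least one derivative on $f$. Set $m=|\alpha|+|\beta|$, $j=|\mu|\in\{1,\dots,|\alpha|\}$, $l=m-j\le m-1$. For each term use H\"older with exponents $2(m-1)/(j-1)$ and $2(m-1)/l$ (reciprocals summing to $1/2$ because $j+l=m$), then the shifted interpolation inequality on $\partial^{\mu}f$ and the unshifted one on $\partial^{\alpha+\beta-\mu}g$; since the interpolation exponents $\theta=(j-1)/(m-1)$ and $\sigma=l/(m-1)$ satisfy $\theta+\sigma=1$, the product collapses to $\|\partial^{\mu}f\,\partial^{\alpha+\beta-\mu}g\|_{L^{2}}\le C\big(\|\partial^{m}f\|_{L^{2}}\|g\|_{L^{\infty}}\big)^{\theta}\big(\|\nabla f\|_{L^{\infty}}\|\partial^{m-1}g\|_{L^{2}}\big)^{1-\theta}$. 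Young's inequality and summation over the finitely many $\mu$ yield the stated bound, with the two sums over $|\gamma|=|\alpha|+|\beta|$ and $|\gamma|=|\alpha|+|\beta|-1$ respectively.

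There is no genuine obstacle here beyond the bookkeeping of interpolation exponents; the only structurally essential observation is the cancellation of the zeroth-order term in the commutator, which is precisely what makes $\|\nabla f\|_{L^{\infty}}$, rather than $\|f\|_{L^{\infty}}$, the correct low-order factor in the second estimate.
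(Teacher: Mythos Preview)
Your proof is correct and follows the standard Moser--Gagliardo--Nirenberg calculus. The paper itself does not give a proof of this lemma: it states the estimates as well-known and refers to \cite{Triebel,BCD,WZZ1}. Your argument---Leibniz expansion, H\"older with complementary exponents, the Gagliardo--Nirenberg interpolation $\|\partial^{j}u\|_{L^{2n/j}}\le C\|u\|_{L^\infty}^{1-j/n}\|\partial^{n}u\|_{L^2}^{j/n}$ (and its shifted version), and Young's inequality to convert the geometric mean into a sum---is precisely the classical proof one finds in those references, so there is nothing to compare beyond noting that you have spelled out what the paper leaves to citation. The observation that the $\mu=0$ term drops in the commutator, forcing $\|\nabla f\|_{L^\infty}$ rather than $\|f\|_{L^\infty}$ as the low-order factor, is indeed the only structural point, and you have it right.
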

Besides, we present the commutator estimate with the mollification operator (see \cite{WW}).

\begin{lemma}\label{molifi-lemma}
 For the mollification operator $\CJ_{\ve}$ defined by {\rm (\ref{molifi-operator})} and $1\leq p\leq\infty$, it follows that
 \begin{align*}
 \|[\CJ_{\ve},f]\partial_ig\|_{L^p}\leq C(1+\|\nabla f\|_{L^{\infty}})\|g\|_{L^p}.
 \end{align*}
\end{lemma}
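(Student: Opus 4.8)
The plan is to prove this as the classical Friedrichs commutator lemma. Recall that $\CJ_{\ve}$ acts by convolution with $\rho_{\ve}(\xx)=\ve^{-d}\rho(\xx/\ve)$, where $\rho\in C^{\infty}_c(\mathbb{R}^d)$ is a fixed standard mollifier with $\int\rho=1$. First I would observe that under the hypothesis $\nabla f\in L^{\infty}$ (together with $f\in L^{\infty}$, which holds in every application of the lemma in this paper) the product $f\partial_ig=\partial_i(fg)-(\partial_if)g$ is a well-defined distribution for $g\in L^p$, so that $[\CJ_{\ve},f]\partial_ig$ makes sense; and by density it suffices to establish the asserted bound, with its $\ve$-uniform constant, for smooth $g$, the general case following by approximation.

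The core step is to write $[\CJ_{\ve},f]\partial_ig=\CJ_{\ve}(f\partial_ig)-f\CJ_{\ve}(\partial_ig)$ as a single integral against $\rho_{\ve}(\xx-\yy)$, namely $\int\rho_{\ve}(\xx-\yy)\big(f(\yy)-f(\xx)\big)\partial_{y_i}g(\yy)\,\ud\yy$, and then integrate by parts in $\yy$ to transfer the derivative off $g$. This yields
\[
[\CJ_{\ve},f]\partial_ig(\xx)=\int_{\mathbb{R}^d}(\partial_i\rho_{\ve})(\xx-\yy)\big(f(\yy)-f(\xx)\big)g(\yy)\,\ud\yy-\int_{\mathbb{R}^d}\rho_{\ve}(\xx-\yy)(\partial_if)(\yy)g(\yy)\,\ud\yy.
\]
The second term is bounded pointwise by $\|\nabla f\|_{L^{\infty}}\,(\rho_{\ve}*|g|)$, whose $L^p$ norm is at most $\|\nabla f\|_{L^{\infty}}\|g\|_{L^p}$ by Young's inequality for convolutions, since $\|\rho_{\ve}\|_{L^1}=1$.

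For the first term I would use the mean value estimate $|f(\yy)-f(\xx)|\le\|\nabla f\|_{L^{\infty}}|\xx-\yy|$ together with the self-similar identity $|\xx-\yy|\,\big|(\partial_i\rho_{\ve})(\xx-\yy)\big|=\ve^{-d}\Phi\big((\xx-\yy)/\ve\big)$, where $\Phi(z)=|z|\,|\partial_i\rho(z)|$ is a fixed function in $L^1(\mathbb{R}^d)$, independent of $\ve$. This bounds the first term pointwise by $\|\nabla f\|_{L^{\infty}}\,(\Phi_{\ve}*|g|)(\xx)$ with $\Phi_{\ve}(z)=\ve^{-d}\Phi(z/\ve)$ and $\|\Phi_{\ve}\|_{L^1}=\|\Phi\|_{L^1}$, so a further application of Young's inequality gives the $L^p$ bound $\|\Phi\|_{L^1}\|\nabla f\|_{L^{\infty}}\|g\|_{L^p}$. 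Adding the two contributions yields $\|[\CJ_{\ve},f]\partial_ig\|_{L^p}\le C\|\nabla f\|_{L^{\infty}}\|g\|_{L^p}\le C(1+\|\nabla f\|_{L^{\infty}})\|g\|_{L^p}$ with $C=1+\|\Phi\|_{L^1}$ depending only on $\rho$.

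No genuine difficulty arises here; the only points deserving attention are the uniformity of the constant in $\ve$ — which is exactly why one recasts $|\xx-\yy|\,|(\partial_i\rho_{\ve})(\xx-\yy)|$ in the self-similar form above before invoking Young's inequality — and the legitimacy of the integration by parts in the distributional setting, which is handled by the density reduction to smooth $g$.
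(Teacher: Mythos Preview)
Your argument is the standard Friedrichs commutator lemma and is correct; the paper does not supply its own proof but simply cites \cite{WW} for this estimate, so there is nothing to compare against. One small correction: the paper's $\CJ_{\ve}$ is defined as the Fourier multiplier $\MF^{-1}\big(\phi(\cdot/\ve)\MF f\big)$ with $\phi\in C^\infty_c$ a frequency cutoff, so the convolution kernel $\rho=\MF^{-1}\phi$ is a Schwartz function rather than an element of $C^\infty_c(\mathbb{R}^d)$ as you wrote. This does not affect your proof at all, since the only properties of $\rho$ you use are $\rho\in L^1$ and $|z|\,|\nabla\rho(z)|\in L^1$, both of which hold for any Schwartz kernel; the self-similar rescaling and Young's inequality go through verbatim.
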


\subsection{Local well-posedness}\label{local-well-subsect}

The main framework of our proof follows the strategy in \cite{WZZ1,WW}. We divide the proof of Theorem \ref{local-posedness-theorem} into three steps.

{\bf Step 1}. {\it Construction of approximate solutions}. We construct an approximate system to apply the classical Friedrich's method.
We define the mollification operator
\begin{align}\label{molifi-operator}
\CJ_{\ve}f(\xi)\eqdefa \MF^{-1}\big(\phi(\xi/\ve)\MF f\big),
\end{align}
where $\MF(f)(\xi)=\int_{\mathbb{R}^d}f(\xx)e^{-i\xx\cdot\xi}\ud\xx$ is the Fourier transform and $\phi(\xi)$ is a smooth cut-off function with $\phi=1$ in the unit ball $B(0,1)$ and $\phi=0$ outside $B(0,2)$. Assume that $\mathbb{P}$ is the Leray's projection operator from a vector field $\ww$ into the corresponding divergence-free field, namely, $\mathbb{P}\ww=\ww-\nabla[\Delta^{-1}(\nabla\cdot\ww)]$.

Then, the approximate system associated with (\ref{new-frame-equation-n1})--(\ref{imcompressible-v}) is given by
\begin{align}
&\dot{\nn}^{\ve}_1=\CJ_{\ve}\bigg(\Big(\frac{1}{2}\CJ_{\ve}\BOm^{\ve}\cdot\CJ_{\ve}\aaa^{\ve}_1
+\frac{\eta_3}{\chi_3}\CJ_{\ve}\A^{\ve}\cdot\CJ_{\ve}\sss^{\ve}_3-\frac{1}{\chi_3}\CJ_{\ve}(\ML^{\ve}_3\CF_{Bi})\Big)\CJ_{\ve}\nn^{\ve}_2\bigg)\nonumber\\
&\qquad\quad-\CJ_{\ve}\bigg(\Big(\frac{1}{2}\CJ_{\ve}\BOm^{\ve}\cdot\CJ_{\ve}\aaa^{\ve}_2+\frac{\eta_2}{\chi_2}\CJ_{\ve}\A^{\ve}\cdot\CJ_{\ve}\sss^{\ve}_4
-\frac{1}{\chi_2}\CJ_{\ve}(\ML^{\ve}_2\CF_{Bi})\Big)\CJ_{\ve}\nn^{\ve}_3\bigg),\label{n1-ve-equation}\\
&\dot{\nn}^{\ve}_2=-\CJ_{\ve}\bigg(\Big(\frac{1}{2}\CJ_{\ve}\BOm^{\ve}\cdot\CJ_{\ve}\aaa^{\ve}_1
+\frac{\eta_3}{\chi_3}\CJ_{\ve}\A^{\ve}\cdot\CJ_{\ve}\sss^{\ve}_3-\frac{1}{\chi_3}\CJ_{\ve}(\ML^{\ve}_3\CF_{Bi})\Big)\CJ_{\ve}\nn^{\ve}_1\bigg)\nonumber\\
&\qquad\quad+\CJ_{\ve}\bigg(\Big(\frac{1}{2}\CJ_{\ve}\BOm^{\ve}\cdot\CJ_{\ve}\aaa^{\ve}_3+\frac{\eta_1}{\chi_1}\CJ_{\ve}\A^{\ve}\cdot\CJ_{\ve}\sss^{\ve}_5
-\frac{1}{\chi_1}\CJ_{\ve}(\ML^{\ve}_1\CF_{Bi})\Big)\CJ_{\ve}\nn^{\ve}_3\bigg),\label{n2-ve-equation}\\
&\dot{\nn}^{\ve}_3=\CJ_{\ve}\bigg(\Big(\frac{1}{2}\CJ_{\ve}\BOm^{\ve}\cdot\CJ_{\ve}\aaa^{\ve}_2
+\frac{\eta_2}{\chi_2}\CJ_{\ve}\A^{\ve}\cdot\CJ_{\ve}\sss^{\ve}_4-\frac{1}{\chi_2}\CJ_{\ve}(\ML^{\ve}_2\CF_{Bi})\Big)\CJ_{\ve}\nn^{\ve}_1\bigg)\nonumber\\
&\qquad\quad-\CJ_{\ve}\bigg(\Big(\frac{1}{2}\CJ_{\ve}\BOm^{\ve}\cdot\CJ_{\ve}\aaa^{\ve}_3+\frac{\eta_1}{\chi_1}\CJ_{\ve}\A^{\ve}\cdot\CJ_{\ve}\sss^{\ve}_5
-\frac{1}{\chi_1}\CJ_{\ve}(\ML^{\ve}_1\CF_{Bi})\Big)\CJ_{\ve}\nn^{\ve}_2\bigg),\label{n3-ve-equation}\\
&\frac{\partial\vv_{\ve}}{\partial t}+\CJ_{\ve}\mathbb{P}\big(\CJ_{\ve}\vv_{\ve}\cdot\nabla\CJ_{\ve}\vv_{\ve}\big)=\eta\CJ_{\ve}\Delta\CJ_{\ve}\vv_{\ve}
+\CJ_{\ve}\mathbb{P}\big(\nabla\cdot\CJ_{\ve}\sigma^{\ve}+\CJ_{\ve}\mathfrak{F}^{\ve}\big),\label{v-ve-equation}\\
&\nabla\cdot\vv_{\ve}=0,\label{v-imcomp}\\
&\big(\vv_{\ve},\nn^{\ve}_1,\nn^{\ve}_2,\nn^{\ve}_3\big)\big|_{t=0}=\big(\CJ_{\ve}\vv_0,\CJ_{\ve}\nn^{(0)}_1,\CJ_{\ve}\nn^{(0)}_2,\CJ_{\ve}\nn^{(0)}_3\big).\label{initial-condition}
\end{align}
Here, $\dot{\nn}^{\ve}_k(k=1,2,3)$ are expressed by
\begin{align*}
\dot{\nn}^{\ve}_k=\partial_t\nn^{\ve}_k+\CJ_{\ve}\big(\CJ_{\ve}\vv_{\ve}\cdot\nabla\CJ_{\ve}\nn^{\ve}_k\big),
\end{align*}
and $\CJ_{\ve}\sss^{\ve}_i(i=1,\cdots,5)$ and $\CJ_{\ve}\aaa^{\ve}_j(j=1,2,3)$ are defined as, respectively,
\begin{align*}
\CJ_{\ve}\sss^{\ve}_1=&\CJ_{\ve}\nn^{\ve}_1\otimes\CJ_{\ve}\nn^{\ve}_1-\frac{\Fi}{3},\quad
\CJ_{\ve}\sss^{\ve}_2=\CJ_{\ve}\nn^{\ve}_2\otimes\CJ_{\ve}\nn^{\ve}_2-\CJ_{\ve}\nn^{\ve}_3\otimes\CJ_{\ve}\nn^{\ve}_3,\\
\CJ_{\ve}\sss^{\ve}_3=&\frac{1}{2}\big(\CJ_{\ve}\nn^{\ve}_1\otimes\CJ_{\ve}\nn^{\ve}_2+\CJ_{\ve}\nn^{\ve}_2\otimes\CJ_{\ve}\nn^{\ve}_1\big),\quad
\CJ_{\ve}\sss^{\ve}_4=\frac{1}{2}\big(\CJ_{\ve}\nn^{\ve}_1\otimes\CJ_{\ve}\nn^{\ve}_3+\CJ_{\ve}\nn^{\ve}_3\otimes\CJ_{\ve}\nn^{\ve}_1\big),\\
\CJ_{\ve}\sss^{\ve}_5=&\frac{1}{2}\big(\CJ_{\ve}\nn^{\ve}_2\otimes\CJ_{\ve}\nn^{\ve}_3+\CJ_{\ve}\nn^{\ve}_3\otimes\CJ_{\ve}\nn^{\ve}_2\big),\quad
\CJ_{\ve}\aaa^{\ve}_1=\CJ_{\ve}\nn^{\ve}_1\otimes\CJ_{\ve}\nn^{\ve}_2-\CJ_{\ve}\nn^{\ve}_2\otimes\CJ_{\ve}\nn^{\ve}_1,\\
\CJ_{\ve}\aaa^{\ve}_2=&\CJ_{\ve}\nn^{\ve}_3\otimes\CJ_{\ve}\nn^{\ve}_1-\CJ_{\ve}\nn^{\ve}_1\otimes\CJ_{\ve}\nn^{\ve}_3,\quad
\CJ_{\ve}\aaa^{\ve}_3=\CJ_{\ve}\nn^{\ve}_2\otimes\CJ_{\ve}\nn^{\ve}_3-\CJ_{\ve}\nn^{\ve}_3\otimes\CJ_{\ve}\nn^{\ve}_2.
\end{align*}
The variational derivatives in the approximate system read
\begin{align*}
\CJ_{\ve}\big(\ML^{\ve}_1\CF_{Bi}\big)=&\CJ_{\ve}\nn^{\ve}_2\cdot\CJ_{\ve}\hh^{\ve}_3-\CJ_{\ve}\nn^{\ve}_3\cdot\CJ_{\ve}\hh^{\ve}_2,\\
\CJ_{\ve}\big(\ML^{\ve}_2\CF_{Bi}\big)=&\CJ_{\ve}\nn^{\ve}_3\cdot\CJ_{\ve}\hh^{\ve}_1-\CJ_{\ve}\nn^{\ve}_1\cdot\CJ_{\ve}\hh^{\ve}_3,\\
\CJ_{\ve}\big(\ML^{\ve}_3\CF_{Bi}\big)=&\CJ_{\ve}\nn^{\ve}_1\cdot\CJ_{\ve}\hh^{\ve}_2-\CJ_{\ve}\nn^{\ve}_2\cdot\CJ_{\ve}\hh^{\ve}_1,
\end{align*}
where $\CJ_{\ve}\hh^{\ve}_i(i=1,2,3)$ are respectively defined as
\begin{align}\label{CJ-hi}
\CJ_{\ve}\hh^{\ve}_i=&\gamma_i\Delta\CJ_{\ve}\nn^{\ve}_i+k_i\nabla{\rm div}\CJ_{\ve}\nn^{\ve}_i
-\sum^3_{j=1}k_{ji}\nabla\times\big(\nabla\times\CJ_{\ve}\nn^{\ve}_i\cdot(\CJ_{\ve}\nn^{\ve}_j)^2\big)\nonumber\\
&-\sum^3_{j=1}k_{ij}(\CJ_{\ve}\nn^{\ve}_i\cdot\nabla\times\CJ_{\ve}\nn^{\ve}_j)(\nabla\times\CJ_{\ve}\nn^{\ve}_j).
\end{align}
The stress $\CJ_{\ve}\sigma^{\ve}$ is given by
\begin{align*}
\CJ_{\ve}\sigma^{\ve}=&\beta_1\big(\CJ_{\ve}\A^{\ve}\cdot\CJ_{\ve}\sss^{\ve}_1\big)\CJ_{\ve}\sss^{\ve}_1
+\beta_0\big(\CJ_{\ve}\A^{\ve}\cdot\CJ_{\ve}\sss^{\ve}_2\big)\CJ_{\ve}\sss^{\ve}_1
+\beta_0\big(\CJ_{\ve}\A^{\ve}\cdot\CJ_{\ve}\sss^{\ve}_1\big)\CJ_{\ve}\sss^{\ve}_2\\
&+\beta_2\big(\CJ_{\ve}\A^{\ve}\cdot\CJ_{\ve}\sss^{\ve}_2\big)\CJ_{\ve}\sss^{\ve}_2
+\Big(\beta_3-\frac{\eta^2_3}{\chi_3}\Big)\big(\CJ_{\ve}\A^{\ve}\cdot\CJ_{\ve}\sss^{\ve}_3\big)\CJ_{\ve}\sss^{\ve}_3\\
&+\frac{\eta_3}{\chi_3}\CJ_{\ve}\big(\ML^{\ve}_3\CF_{Bi}\big)\CJ_{\ve}\sss^{\ve}_3
+\Big(\beta_4-\frac{\eta^2_2}{\chi_2}\Big)\big(\CJ_{\ve}\A^{\ve}\cdot\CJ_{\ve}\sss^{\ve}_4\big)\CJ_{\ve}\sss^{\ve}_4\\
&+\frac{\eta_2}{\chi_2}\CJ_{\ve}\big(\ML^{\ve}_2\CF_{Bi}\big)\CJ_{\ve}\sss^{\ve}_4
+\Big(\beta_5-\frac{\eta^2_1}{\chi_1}\Big)\big(\CJ_{\ve}\A^{\ve}\cdot\CJ_{\ve}\sss^{\ve}_5\big)\CJ_{\ve}\sss^{\ve}_5\\
&+\frac{\eta_1}{\chi_1}\CJ_{\ve}\big(\ML^{\ve}_1\CF_{Bi}\big)\CJ_{\ve}\sss^{\ve}_5
+\frac{1}{2}\CJ_{\ve}\big(\ML^{\ve}_3\CF_{Bi}\big)\CJ_{\ve}\aaa^{\ve}_1\\
&+\frac{1}{2}\CJ_{\ve}\big(\ML^{\ve}_2\CF_{Bi}\big)\CJ_{\ve}\aaa^{\ve}_2+\frac{1}{2}\CJ_{\ve}\big(\ML^{\ve}_1\CF_{Bi}\big)\CJ_{\ve}\aaa^{\ve}_3,
\end{align*}
where we have used the equations (\ref{frame-equation-n1})--(\ref{frame-equation-n3}) to eliminate time derivatives.
In addition, the body force $\CJ_{\ve}\mathfrak{F}^{\ve}$ is defined by
\begin{align*}
\CJ_{\ve}\mathfrak{F}^{\ve}_i=&\partial_i\CJ_{\ve}\nn^{\ve}_1\cdot\CJ_{\ve}\nn^{\ve}_2\CJ_{\ve}\big(\ML^{\ve}_3\CF_{Bi}\big)
+\partial_i\CJ_{\ve}\nn^{\ve}_3\cdot\CJ_{\ve}\nn^{\ve}_1\CJ_{\ve}\big(\ML^{\ve}_2\CF_{Bi}\big)\\
&+\partial_i\CJ_{\ve}\nn^{\ve}_2\cdot\CJ_{\ve}\nn^{\ve}_3\CJ_{\ve}\big(\ML^{\ve}_1\CF_{Bi}\big).
\end{align*}

The above approximate system (\ref{n1-ve-equation})--(\ref{initial-condition}) can be regarded as an ODE system on $L^2(\mathbb{R}^d)$. Thus, the Cauchy--Lipschitz theorem implies that there exists a strictly maximal time $T_{\ve}$ and a unique solution $(\Fp^{\ve},\vv^{\ve})\in C\big([0,T_{\ve});H^k(\mathbb{R}^d)\big)$ for any $k\geq0$, where $\Fp^{\ve}=(\nn^{\ve}_1,\nn^{\ve}_2,\nn^{\ve}_3)$.

{\bf Step 2}. {\it Uniform energy estimates}. We define the following energy functional
\begin{align}\label{energy-functional-e}
\CE_s(\Fp^{\ve},\vv^{\ve})\eqdefa&\frac{1}{2}\|\Fp^{\ve}-\Fp^{(0)}\|^2_{L^2}+\CF_{Bi}[\Fp^{\ve}]+\frac{1}{2}\|\vv^{\ve}\|^2_{L^2}+\sum^3_{i=1}\CE^{s}_i(\Fp^{\ve})+\frac{1}{2}\|\Delta^s\vv^{\ve}\|^2_{L^2},
\end{align}
where $\|\Fp^{\ve}-\Fp^{(0)}\|^2_{L^2}=\sum\limits^3_{i=1}\|\nn^{\ve}_i-\nn^{(0)}_i\|^2_{L^2}$,  $\Fp^{(0)}=\big(\nn^{(0)}_1,\nn^{(0)}_2,\nn^{(0)}_3\big)\in SO(3)$ is the initial orthonormal frame, and $\CE^{s}_i(\Fp^{\ve})(i=1,2,3)$ are respectively defined as
\begin{align*}
\CE^s_i(\Fp^{\ve})=&\frac{1}{2}\Big(\gamma_i\|\Delta^s\nabla\nn^{\ve}_i\|^2_{L^2}+k_i\|\Delta^s{\rm div}\nn^{\ve}_i\|^2_{L^2}
+\sum^3_{j=1}k_{ji}\|\Delta^s(\nabla\times\nn^{\ve}_i)\cdot\CJ_{\ve}\nn^{\ve}_j\|^2_{L^2}\Big).
\end{align*}

We first deal with the estimates of lower order terms in (\ref{energy-functional-e}). The approximate system (\ref{n1-ve-equation})--(\ref{v-imcomp}) still satisfies the following energy dissipative law:
\begin{align}\label{ve-energy-law}
&\frac{\ud}{\ud t}\Big(\frac{1}{2}\int_{\mathbb{R}^d}|\vv^{\ve}|^2\ud\xx+\CF_{Bi}[\Fp^{\ve}]\Big)
=-\eta\|\nabla\CJ_{\ve}\vv^{\ve}\|^2_{L^2}-\sum^3_{k=1}\frac{1}{\chi_k}\|\CJ_{\ve}\big(\ML^{\ve}_k\CF_{Bi}\big)\|^2_{L^2}\nonumber\\
&\quad-\bigg(\beta_1\|\CJ_{\ve}\A^{\ve}\cdot\CJ_{\ve}\sss^{\ve}_1\|^2_{L^2}
+2\beta_0\int_{\mathbb{R}^d}(\CJ_{\ve}\A^{\ve}\cdot\CJ_{\ve}\sss^{\ve}_1)(\CJ_{\ve}\A^{\ve}\cdot\CJ_{\ve}\sss^{\ve}_2)\ud\xx
+\beta_2\|\CJ_{\ve}\A^{\ve}\cdot\CJ_{\ve}\sss^{\ve}_2\|^2_{L^2}\bigg)\nonumber\\
&\quad-\Big(\beta_3-\frac{\eta^2_3}{\chi_3}\Big)\|\CJ_{\ve}\A^{\ve}\cdot\CJ_{\ve}\sss^{\ve}_3\|^2_{L^2}
-\Big(\beta_4-\frac{\eta^2_2}{\chi_2}\Big)\|\CJ_{\ve}\A^{\ve}\cdot\CJ_{\ve}\sss^{\ve}_4\|^2_{L^2}\nonumber\\
&\quad-\Big(\beta_5-\frac{\eta^2_1}{\chi_1}\Big)\|\CJ_{\ve}\A^{\ve}\cdot\CJ_{\ve}\sss^{\ve}_5\|^2_{L^2}.
\end{align}
By the Calder$\acute{\rm o}$n--Zygmund inequality,
\begin{align*}
\|\nabla^2\nn_i\|_{L^2}\leq C(\|\Delta\nn_i\|_{L^2}+\|\nn_i-\nn^{(0)}_i\|_{L^2}),\quad i=1,2,3,
\end{align*}
we derive from (\ref{CJ-hi}) that
\begin{align*}
\|\CJ_{\ve}\hh^{\ve}_i\|_{L^2}\leq&C\Big(\|\Delta\nn^{\ve}_i\|_{L^2}+\|\nabla{\rm div}\nn^{\ve}_i\|_{L^2}+\|\nabla^2\nn^{\ve}_i\|_{L^2}\|\Fp^{\ve}\|^2_{L^{\infty}}\\
&+\|\nabla\nn^{\ve}_i\|_{L^2}\|\Fp^{\ve}\|_{L^{\infty}}\|\nabla\Fp^{\ve}\|_{L^{\infty}}\Big)\\
\leq&C\CP\big(\|\Fp^{\ve}\|_{L^{\infty}},\|\nabla\Fp^{\ve}\|_{L^{\infty}}\big)\CE^{\frac{1}{2}}_s(\vv^{\ve},\Fp^{\ve}).
\end{align*}

By means of the equations (\ref{n1-ve-equation})--(\ref{n3-ve-equation}), $\nabla\cdot\vv^{\ve}=0$ and the estimates of $\CJ_{\ve}\hh^{\ve}_i(i=1,2,3)$, we deduce that
\begin{align}\label{L-2-estimate-n1}
&\frac{1}{2}\frac{\ud}{\ud t}\|\nn^{\ve}_1-\nn^{(0)}_1\|^2_{L^2}=\big\langle\partial_t\nn^{\ve}_1,\nn^{\ve}_1-\nn^{(0)}_1\big\rangle\nonumber\\
&\quad=-\big\langle\CJ_{\ve}\vv^{\ve}\cdot\nabla\CJ_{\ve}\nn^{(0)}_1,\CJ_{\ve}(\nn^{\ve}_1-\nn^{(0)}_1)\big\rangle\nonumber\\
&\qquad+\Big\langle\Big(\frac{1}{2}\CJ_{\ve}\BOm^{\ve}\cdot\CJ_{\ve}\aaa^{\ve}_1
+\frac{\eta_3}{\chi_3}\CJ_{\ve}\A^{\ve}\cdot\CJ_{\ve}\sss^{\ve}_3-\frac{1}{\chi_3}\CJ_{\ve}(\ML^{\ve}_3\CF_{Bi})\Big)
\CJ_{\ve}\nn^{\ve}_2,\CJ_{\ve}(\nn^{\ve}_1-\nn^{(0)}_1)\Big\rangle\nonumber\\
&\qquad-\Big\langle\Big(\frac{1}{2}\CJ_{\ve}\BOm^{\ve}\cdot\CJ_{\ve}\aaa^{\ve}_2
+\frac{\eta_2}{\chi_2}\CJ_{\ve}\A^{\ve}\cdot\CJ_{\ve}\sss^{\ve}_4-\frac{1}{\chi_2}\CJ_{\ve}(\ML^{\ve}_2\CF_{Bi})\Big)
\CJ_{\ve}\nn^{\ve}_3,\CJ_{\ve}(\nn^{\ve}_1-\nn^{(0)}_1)\Big\rangle\nonumber\\
&\quad\leq C\Big(\|\nabla\nn^{(0)}_1\|_{L^{\infty}}\|\vv_{\ve}\|_{L^2}+(\|\aaa^{\ve}_1\|_{L^{\infty}}+\|\sss^{\ve}_3\|_{L^{\infty}})\|\nn^{\ve}_2\|_{L^{\infty}}\|\nabla\vv^{\ve}\|_{L^2}\Big)\|\nn^{\ve}_1-\nn^{(0)}_1\|_{L^2}\nonumber\\
&\qquad+C(\|\nn^{\ve}_1\|_{L^{\infty}}+\|\nn^{\ve}_2\|_{L^{\infty}})\|\nn^{\ve}_2\|_{L^{\infty}}\big(\|\CJ_{\ve}\hh^{\ve}_1\|_{L^2}+\|\CJ_{\ve}\hh^{\ve}_2\|_{L^2}\big)\|\nn^{\ve}_1-\nn^{(0)}_1\|_{L^2}\nonumber\\
&\qquad+C(\|\aaa^{\ve}_2\|_{L^{\infty}}+\|\sss^{\ve}_4\|_{L^{\infty}})\|\nn^{\ve}_3\|_{L^{\infty}}\|\nabla\vv^{\ve}\|_{L^2}\|\nn^{\ve}_1-\nn^{(0)}_1\|_{L^2}\nonumber\\
&\qquad+C(\|\nn^{\ve}_1\|_{L^{\infty}}+\|\nn^{\ve}_3\|_{L^{\infty}})\|\nn^{\ve}_3\|_{L^{\infty}}\big(\|\CJ_{\ve}\hh^{\ve}_1\|_{L^2}+\|\CJ_{\ve}\hh^{\ve}_3\|_{L^2}\big)\|\nn^{\ve}_1-\nn^{(0)}_1\|_{L^2}\nonumber\\
&\quad\leq C\CP\big(\|\nabla\nn^{(0)}_1\|_{L^{\infty}},\|\Fp^{\ve}\|_{L^{\infty}},\|\nabla\Fp^{\ve}\|_{L^{\infty}}\big)\CE_s(\Fp^{\ve},\vv^{\ve}).
\end{align}
Similar derivations yield
\begin{align}
\frac{1}{2}\frac{\ud}{\ud t}\|\nn^{\ve}_2-\nn^{(0)}_2\|^2_{L^2}
\leq& C\CP\big(\|\nabla\nn^{(0)}_2\|_{L^{\infty}},\|\Fp^{\ve}\|_{L^{\infty}},\|\nabla\Fp^{\ve}\|_{L^{\infty}}\big)\CE_s(\Fp^{\ve},\vv^{\ve}),\label{L-2-estimate-n2}\\
\frac{1}{2}\frac{\ud}{\ud t}\|\nn^{\ve}_3-\nn^{(0)}_3\|^2_{L^2}
\leq& C\CP\big(\|\nabla\nn^{(0)}_3\|_{L^{\infty}},\|\Fp^{\ve}\|_{L^{\infty}},\|\nabla\Fp^{\ve}\|_{L^{\infty}}\big)\CE_s(\Fp^{\ve},\vv^{\ve}).\label{L-2-estimate-n3}
\end{align}

We now consider the estimate of the higher order derivatives for the frame $\Fp^{\ve}=(\nn^{\ve}_1,\nn^{\ve}_2,\nn^{\ve}_3)$. Using the equation (\ref{n1-ve-equation}) and integrating by parts, we can derive that
\begin{align}\label{n1-higher-grid-L2}
&\frac{1}{2}\frac{\ud}{\ud t}\big\langle\nabla\Delta^s\nn^{\ve}_1,\nabla\Delta^s\nn^{\ve}_1\big\rangle\nonumber\\
&\quad=-\Big\langle\Delta^s\Big[\Big(\frac{1}{2}\CJ_{\ve}\BOm^{\ve}\cdot\CJ_{\ve}\aaa^{\ve}_1+\frac{\eta_3}{\chi_3}\CJ_{\ve}\A^{\ve}\cdot\CJ_{\ve}\sss^{\ve}_3\Big)\CJ_{\ve}\nn^{\ve}_2\Big], \Delta^{s+1}\CJ_{\ve}\nn^{\ve}_1\Big\rangle\nonumber\\
&\qquad+\frac{1}{\chi_3}\big\langle\Delta^s\big(\CJ_{\ve}(\ML^{\ve}_3\CF_{Bi})\CJ_{\ve}\nn^{\ve}_2\big), \Delta^{s+1}\CJ_{\ve}\nn^{\ve}_1\big\rangle\nonumber\\
&\qquad+\Big\langle\Delta^s\Big[\Big(\frac{1}{2}\CJ_{\ve}\BOm^{\ve}\cdot\CJ_{\ve}\aaa^{\ve}_2+\frac{\eta_2}{\chi_2}\CJ_{\ve}\A^{\ve}\cdot\CJ_{\ve}\sss^{\ve}_4\Big)\CJ_{\ve}\nn^{\ve}_3\Big], \Delta^{s+1}\CJ_{\ve}\nn^{\ve}_1\Big\rangle\nonumber\\
&\qquad-\frac{1}{\chi_2}\big\langle\Delta^s\big(\CJ_{\ve}(\ML^{\ve}_2\CF_{Bi})\CJ_{\ve}\nn^{\ve}_3\big), \Delta^{s+1}\CJ_{\ve}\nn^{\ve}_1\big\rangle\nonumber\\
&\qquad-\big\langle\nabla\Delta^s(\CJ_{\ve}\vv^{\ve}\cdot\nabla\CJ_{\ve}\nn^{\ve}_1), \nabla\Delta^s\CJ_{\ve}\nn^{\ve}_1\big\rangle\nonumber\\
&\quad\eqdefa I_1+I_2+I_3+I_4+I_5.
\end{align}
Similarly, we have
\begin{align}\label{n1-higher-div-L2}
&\frac{1}{2}\frac{\ud}{\ud t}\big\langle\Delta^s{\rm div}\nn^{\ve}_1,\Delta^s{\rm div}\nn^{\ve}_1\big\rangle\nonumber\\
&\quad=-\Big\langle\Delta^s\Big[\Big(\frac{1}{2}\CJ_{\ve}\BOm^{\ve}\cdot\CJ_{\ve}\aaa^{\ve}_1+\frac{\eta_3}{\chi_3}\CJ_{\ve}\A^{\ve}\cdot\CJ_{\ve}\sss^{\ve}_3\Big)\CJ_{\ve}\nn^{\ve}_2\Big], \Delta^{s}\nabla{\rm div}\CJ_{\ve}\nn^{\ve}_1\Big\rangle\nonumber\\
&\qquad+\frac{1}{\chi_3}\big\langle\Delta^s\big(\CJ_{\ve}(\ML^{\ve}_3\CF_{Bi})\CJ_{\ve}\nn^{\ve}_2\big), \Delta^{s}\nabla{\rm div}\CJ_{\ve}\nn^{\ve}_1\big\rangle\nonumber\\
&\qquad+\Big\langle\Delta^s\Big[\Big(\frac{1}{2}\CJ_{\ve}\BOm^{\ve}\cdot\CJ_{\ve}\aaa^{\ve}_2+\frac{\eta_2}{\chi_2}\CJ_{\ve}\A^{\ve}\cdot\CJ_{\ve}\sss^{\ve}_4\Big)\CJ_{\ve}\nn^{\ve}_3\Big], \Delta^{s}\nabla{\rm div}\CJ_{\ve}\nn^{\ve}_1\Big\rangle\nonumber\\
&\qquad-\frac{1}{\chi_2}\big\langle\Delta^s\big(\CJ_{\ve}(\ML^{\ve}_2\CF_{Bi})\CJ_{\ve}\nn^{\ve}_3\big), \Delta^{s}\nabla{\rm div}\CJ_{\ve}\nn^{\ve}_1\big\rangle\nonumber\\
&\qquad-\big\langle\Delta^s{\rm div}(\CJ_{\ve}\vv^{\ve}\cdot\nabla\CJ_{\ve}\nn^{\ve}_1), \Delta^s{\rm div}\CJ_{\ve}\nn^{\ve}_1\big\rangle\nonumber\\
&\quad\eqdefa I'_1+I'_2+I'_3+I'_4+I'_5.
\end{align}
Using the incompressibility condition $\nabla\cdot\vv^{\ve}=0$ and Lemma \ref{product-estimate-lemma}, we deduce that
\begin{align}\label{CI+CI1-5}
I_5+I'_5=&-\big\langle[\nabla\Delta^s,\CJ_{\ve}\vv^{\ve}\cdot]\nabla\CJ_{\ve}\nn^{\ve}_1,\Delta^s\nabla\CJ_{\ve}\nn^{\ve}_1\big\rangle\nonumber\\
&-\big\langle[\Delta^s{\rm div},\CJ_{\ve}\vv^{\ve}\cdot]\nabla\CJ_{\ve}\nn^{\ve}_1,\Delta^s{\rm div}\CJ_{\ve}\nn^{\ve}_1\big\rangle\nonumber\\
\leq&\|[\nabla\Delta^s,\CJ_{\ve}\vv^{\ve}\cdot]\nabla\vv^{\ve}\|_{L^2}\|\Delta^s\nabla\CJ_{\ve}\nn^{\ve}_1\|_{L^2}\nonumber\\
&+\|[\Delta^s{\rm div},\CJ_{\ve}\vv^{\ve}\cdot]\nabla\CJ_{\ve}\nn^{\ve}_1\|_{L^2}\|\Delta^s{\rm div}\CJ_{\ve}\nn^{\ve}_1\|_{L^2}\nonumber\\
\leq& C\big(\|\nabla\nn^{\ve}_1\|_{H^{2s}}\|\nabla\vv^{\ve}\|_{L^{\infty}}+\|\nabla\CJ_{\ve}\vv^{\ve}\|_{H^{2s}}\|\nabla\nn^{\ve}_1\|_{L^{\infty}}\big)\|\nabla\nn^{\ve}_1\|_{H^{2s}}\nonumber\\
\leq& C_{\delta}(\|\nabla\vv^{\ve}\|_{L^{\infty}}+\|\nabla\nn^{\ve}_1\|^2_{L^{\infty}})\|\nabla\nn^{\ve}_1\|^2_{H^{2s}}+\delta\|\nabla\CJ_{\ve}\vv^{\ve}\|^2_{H^{2s}},
\end{align}
where $\delta$ represents a small positive constant to be determined later, whose value may vary in different places.

For simplicity, we denote
\begin{align*}
\CJ_{\ve}\CH^{\Delta^s}_{1}=&\CJ_{\ve}\nn^{\ve}_2\cdot\CJ_{\ve}\Delta^s\hh^{\ve}_3-\CJ_{\ve}\nn^{\ve}_3\cdot\CJ_{\ve}\Delta^s\hh^{\ve}_2,\\
\CJ_{\ve}\CH^{\Delta^s}_{2}=&\CJ_{\ve}\nn^{\ve}_3\cdot\CJ_{\ve}\Delta^s\hh^{\ve}_1-\CJ_{\ve}\nn^{\ve}_1\cdot\CJ_{\ve}\Delta^s\hh^{\ve}_3,\\
\CJ_{\ve}\CH^{\Delta^s}_{3}=&\CJ_{\ve}\nn^{\ve}_1\cdot\CJ_{\ve}\Delta^s\hh^{\ve}_2-\CJ_{\ve}\nn^{\ve}_2\cdot\CJ_{\ve}\Delta^s\hh^{\ve}_1.
\end{align*}
Moreover, from (\ref{CJ-hi}), we also define
\begin{align}\label{Deltas-CJ-hh-ve-comp}
\Delta^s\CJ_{\ve}\hh^{\ve}_i\eqdefa\CL^{\ve}_i(\Fp^{\ve})+\CG^{\ve}_i(\Fp^{\ve}),\quad i=1,2,3,
\end{align}
where $\CL^{\ve}_i(\Fp^{\ve})$ and $\CG^{\ve}_i(\Fp^{\ve})$ are higher-order and lower-order derivative terms, respectively,
\begin{align*}
\CL^{\ve}_i(\Fp^{\ve})=&\gamma_i\Delta^{s+1}\CJ_{\ve}\nn^{\ve}_i+k_i\Delta^s\nabla{\rm div}\CJ_{\ve}\nn^{\ve}_i
-\sum^3_{j=1}k_{ji}\Delta^s\nabla\times\big(\nabla\times\CJ_{\ve}\nn^{\ve}_i\cdot(\CJ_{\ve}\nn^{\ve}_j)^2\big),\\
\CG^{\ve}_i(\Fp^{\ve})=&-\sum^3_{j=1}k_{ij}\Delta^s\big[(\CJ_{\ve}\nn^{\ve}_i\cdot\nabla\times\CJ_{\ve}\nn^{\ve}_j)(\nabla\times\CJ_{\ve}\nn^{\ve}_j)\big].
\end{align*}

Using the approximate system (\ref{n1-ve-equation})--(\ref{n3-ve-equation}),
and the expressions of $\CJ_{\ve}\hh^{\ve}_i$, it follows that
\begin{align}\label{CJ-h-Linfty}
\|\partial_t\nn^{\ve}_i\|_{L^{\infty}}\leq& C\CP\big(\|\vv^{\ve}\|_{L^{\infty}},\|\nabla\vv^{\ve}\|_{L^{\infty}},\|\Fp^{\ve}\|_{L^{\infty}},\|\nabla\Fp^{\ve}\|_{L^{\infty}},\|\nabla^2\Fp^{\ve}\|_{L^{\infty}}\big),\quad i=1,2,3.
\end{align}
By a direct calculation,
we obtain from (\ref{CJ-h-Linfty}) that
\begin{align}\label{n1-K2-energy-estimate}
&\frac{1}{2}\frac{\ud}{\ud t}\big\|\CJ_{\ve}\nn^{\ve}_j\cdot\Delta^{s}(\nabla\times\nn^{\ve}_1)\big\|^2_{L^2}\nonumber\\
&\quad=\big\langle\CJ_{\ve}\partial_t\nn^{\ve}_j\cdot\Delta^{s}(\nabla\times\nn^{\ve}_1)+\CJ_{\ve}\nn^{\ve}_j\cdot(\Delta^{s}\nabla\times\dot{\nn}^{\ve}_1), \CJ_{\ve}\nn^{\ve}_j\cdot\Delta^{s}(\nabla\times\nn^{\ve}_1)\big\rangle\nonumber\\
&\qquad-\big\langle\CJ_{\ve}\nn^{\ve}_j\cdot\big(\Delta^{s}\nabla\times\CJ_{\ve}(\CJ_{\ve}\vv^{\ve}\cdot\nabla\CJ_{\ve}\nn^{\ve}_1)\big),\CJ_{\ve}\nn^{\ve}_j\cdot\Delta^{s}(\nabla\times\nn^{\ve}_1)\big\rangle\nonumber\\
&\quad\leq\|\partial_t\nn^{\ve}_j\|_{L^{\infty}}\|\nn^{\ve}_j\|_{L^{\infty}}\|\nabla\nn^{\ve}_1\|^2_{H^{2s}}\nonumber\\
&\qquad+\big\langle\CJ_{\ve}\nn^{\ve}_j\cdot(\Delta^{s}\nabla\times\dot{\nn}^{\ve}_1),\CJ_{\ve}\nn^{\ve}_j\cdot\Delta^{s}(\nabla\times\nn^{\ve}_1)\big\rangle\nonumber\\
&\qquad-\big\langle\CJ_{\ve}\nn^{\ve}_j\cdot\big(\Delta^{s}\nabla\times\CJ_{\ve}(\CJ_{\ve}\vv^{\ve}\cdot\nabla\CJ_{\ve}\nn^{\ve}_1)\big),\CJ_{\ve}\nn^{\ve}_j\cdot\Delta^{s}(\nabla\times\nn^{\ve}_1)\big\rangle\nonumber\\
&\quad\leq C\CP\big(\|\vv^{\ve}\|_{L^{\infty}},\|\nabla\vv^{\ve}\|_{L^{\infty}},\|\Fp^{\ve}\|_{L^{\infty}},\|\nabla\Fp^{\ve}\|_{L^{\infty}},\|\nabla^2\Fp^{\ve}\|_{L^{\infty}}\big)\|\nabla\nn^{\ve}_1\|^2_{H^{2s}}\nonumber\\
&\qquad\underbrace{+\big\langle\CJ_{\ve}\nn^{\ve}_j\cdot(\Delta^{s}\nabla\times\dot{\nn}^{\ve}_1),\CJ_{\ve}\nn^{\ve}_j\cdot\Delta^{s}(\nabla\times\nn^{\ve}_1)\big\rangle}_{J_1}\nonumber\\
&\qquad\underbrace{-\big\langle\Delta^{s}\nabla\times(\CJ_{\ve}\vv^{\ve}\cdot\nabla\CJ_{\ve}\nn^{\ve}_1),[\CJ_{\ve},(\CJ_{\ve}\nn^{\ve}_j)^2\cdot]\Delta^{s}(\nabla\times\nn^{\ve}_1)\big\rangle}_{J_2}\nonumber\\
&\qquad\underbrace{-\big\langle[\Delta^{s}\nabla\times,\CJ_{\ve}\vv^{\ve}\cdot]\nabla\CJ_{\ve}\nn^{\ve}_1,(\CJ_{\ve}\nn^{\ve}_j)^2\cdot(\Delta^{s}\nabla\times\CJ_{\ve}\nn^{\ve}_1)\big\rangle}_{J_3}\nonumber\\
&\qquad\underbrace{-\big\langle\CJ_{\ve}\vv^{\ve}\cdot\nabla(\Delta^{s}\nabla\times\CJ_{\ve}\nn^{\ve}_1),(\CJ_{\ve}\nn^{\ve}_j)^2\cdot(\Delta^{s}\nabla\times\CJ_{\ve}\nn^{\ve}_1)\big\rangle}_{J_4},
\end{align}
where $\dot{\nn}^{\ve}_1=\partial_t\nn^{\ve}_1+\CJ_{\ve}\big(\CJ_{\ve}\vv_{\ve}\cdot\nabla\CJ_{\ve}\nn^{\ve}_1\big)$.

For the term $J_2$, applying Lemma \ref{product-estimate-lemma} and Lemma \ref{molifi-lemma}, we have
\begin{align*}
J_2&\leq\big|\big\langle\Delta^{s-1}\nabla_{k}\nabla\times(\CJ_{\ve}\vv_{\ve}\cdot\nabla\CJ_{\ve}\nn^{\ve}_1),[\CJ_{\ve},(\CJ_{\ve}\nn^{\ve}_j)^2\cdot]\Delta^{s}(\nabla\times\nabla_{k}\nn^{\ve}_1)\big\rangle\big|\\
&\quad+C\|\nabla^{2s}(\CJ_{\ve}\vv_{\ve}\cdot\nabla\CJ_{\ve}\nn^{\ve}_1)\|_{L^2}\|\nn^{\ve}_j\|_{L^{\infty}}\|\nabla\nn^{\ve}_j\|_{L^{\infty}}\|\nabla\nn^{\ve}_1\|_{H^{2s}}\\
&\leq C\|\nabla^{2s}(\CJ_{\ve}\vv_{\ve}\cdot\nabla\CJ_{\ve}\nn^{\ve}_1)\|_{L^2}(1+\|\nabla\nn^{\ve}_j\|_{L^{\infty}}\|\nn^{\ve}_j\|_{L^{\infty}})\|\Delta^{s}\nabla\times\CJ_{\ve}\nn^{\ve}_1\|_{L^2}\\
&\quad+C\big(\|\vv^{\ve}\|_{L^{\infty}}\|\nabla\nn^{\ve}_1\|_{H^{2s}}+\|\nabla\nn^{\ve}_1\|_{L^{\infty}}\|\vv^{\ve}\|_{H^{2s}}\big)\|\nn^{\ve}_j\|_{L^{\infty}}\|\nabla\nn^{\ve}_j\|_{L^{\infty}}\|\nabla\nn^{\ve}_1\|_{H^{2s}}\\
&\leq C\CP\big(\|\vv^{\ve}\|_{L^{\infty}},\|\nabla\vv^{\ve}\|_{L^{\infty}},\|\Fp^{\ve}\|_{L^{\infty}},\|\nabla\Fp^{\ve}\|_{L^{\infty}}\big)(\|\nabla\nn^{\ve}_1\|_{H^{2s}}^2+\|\vv^{\ve}\|^2_{H^{2s}}).
\end{align*}
By Lemma \ref{product-estimate-lemma}, the incompressibility condition $\nabla\cdot\vv^{\ve}=0$ and integration by parts, the terms $J_3$ and $J_4$ can be estimated as follows:
\begin{align*}
J_3&\leq\|[\Delta^{s}\nabla\times,\CJ_{\ve}\vv^{\ve}\cdot]\nabla\CJ_{\ve}\nn^{\ve}_1\|_{L^2}\|(\CJ_{\ve}\nn^{\ve}_j)^2\cdot(\Delta^{s}\nabla\times\CJ_{\ve}\nn^{\ve}_1)\|_{L^2}\\
&\leq C(\|\nabla\vv^{\ve}\|_{L^{\infty}}\|\nabla\nn^{\ve}_1\|_{H^{2s}}+\|\nabla\CJ_{\ve}\vv^{\ve}\|_{H^{2s}}\|\nabla\nn^{\ve}_1\|_{L^{\infty}})(1+\|\nn^{\ve}_j\|_{L^{\infty}}^2)\|\nabla\nn^{\ve}_1\|_{H^{2s}}\\
&\leq C_{\delta}\CP\big(\|\Fp^{\ve}\|_{L^{\infty}},\|\nabla\Fp^{\ve}\|_{L^{\infty}},\|\nabla\vv^{\ve}\|_{L^{\infty}}\big)\|\nabla\nn^{\ve}_1\|_{H^{2s}}^2+\delta\|\nabla\CJ_{\ve}\vv^{\ve}\|_{H^{2s}}^2,\\
J_4&=-\frac{1}{2}\int_{\mathbb{R}^d}\CJ_{\ve}\vv^{\ve}\cdot\nabla\big[\CJ_{\ve}\nn^{\ve}_j\cdot(\Delta^s\nabla\times\CJ_{\ve}\nn^{\ve}_1)\big]^2\ud\xx\\
&\quad+\int_{\mathbb{R}^d}\CJ_{\ve}v^{\ve}_{\alpha}(\Delta^{s}\nabla\times\CJ_{\ve}\nn^{\ve}_1)_{\beta}(\Delta^{s}\nabla\times\CJ_{\ve}\nn^{\ve}_1)_{\gamma}\CJ_{\ve}n^{\ve}_{j\gamma}\partial_{\alpha}(\CJ_{\ve}n^{\ve}_{j\beta})\ud\xx\\
&\leq\|\vv^{\ve}\|_{L^{\infty}}\|\nn^{\ve}_j\|_{L^{\infty}}\|\nabla\nn^{\ve}_j\|_{L^{\infty}}\|\nabla\nn^{\ve}_1\|^2_{H^{2s}}.
\end{align*}

We are now ready to control the term $J_1$. Define $\CU^{\ve}$ by
\begin{align*}
\CU^{\ve}\eqdefa&\Big(\frac{1}{2}\CJ_{\ve}\BOm^{\ve}\cdot\CJ_{\ve}\aaa^{\ve}_1
+\frac{\eta_3}{\chi_3}\CJ_{\ve}\A^{\ve}\cdot\CJ_{\ve}\sss^{\ve}_3-\frac{1}{\chi_3}\CJ_{\ve}(\ML^{\ve}_3\CF_{Bi})\Big)\CJ_{\ve}\nn^{\ve}_2\\
&-\Big(\frac{1}{2}\CJ_{\ve}\BOm^{\ve}\cdot\CJ_{\ve}\aaa^{\ve}_2+\frac{\eta_2}{\chi_2}\CJ_{\ve}\A^{\ve}\cdot\CJ_{\ve}\sss^{\ve}_4
-\frac{1}{\chi_2}\CJ_{\ve}(\ML^{\ve}_2\CF_{Bi})\Big)\CJ_{\ve}\nn^{\ve}_3.
\end{align*}
Then, with the help of (\ref{n1-ve-equation}), Lemma \ref{product-estimate-lemma}, Lemma \ref{molifi-lemma}, and the analogous argument for the term $J_2$, the term $J_1$ can be handled as
\begin{align*}
J_1=&\big\langle\Delta^s\nabla\times\dot{\nn}^{\ve}_1,(\CJ_{\ve}\nn^{\ve}_j)^2\cdot\Delta^s(\nabla\times\nn^{\ve}_1)\big\rangle\\
=&\big\langle\Delta^s\CU^{\ve},\Delta^s\nabla\times\big(\nabla\times\CJ_{\ve}\nn^{\ve}_1\cdot(\CJ_{\ve}\nn^{\ve}_j)^2\big)\big\rangle\\
&+\Big\langle\Delta^s\nabla\times\CU^{\ve},[\CJ_{\ve},(\CJ_{\ve}\nn^{\ve}_j)^2\cdot]\Delta^s(\nabla\times\nn^{\ve}_1)-[\Delta^s,(\CJ_{\ve}\nn^{\ve}_j)^2\cdot](\nabla\times\CJ_{\ve}\nn^{\ve}_1)\Big\rangle\\
\leq&\underbrace{\Big\langle\Delta^s\Big[\Big(\frac{1}{2}\CJ_{\ve}\BOm^{\ve}\cdot\CJ_{\ve}\aaa^{\ve}_1+\frac{\eta_3}{\chi_3}\CJ_{\ve}\A^{\ve}\cdot\CJ_{\ve}\sss^{\ve}_3\Big)\CJ_{\ve}\nn^{\ve}_2\Big], \Delta^s\nabla\times\big(\nabla\times\CJ_{\ve}\nn^{\ve}_1\cdot(\CJ_{\ve}\nn^{\ve}_j)^2\big)\Big\rangle}_{J_{11}}\\
&\underbrace{-\frac{1}{\chi_3}\big\langle\Delta^s\big(\CJ_{\ve}(\ML^{\ve}_3\CF_{Bi})\CJ_{\ve}\nn^{\ve}_2\big), \Delta^s\nabla\times\big(\nabla\times\CJ_{\ve}\nn^{\ve}_1\cdot(\CJ_{\ve}\nn^{\ve}_j)^2\big)\big\rangle}_{J_{12}}\\
&\underbrace{-\Big\langle\Delta^s\Big[\Big(\frac{1}{2}\CJ_{\ve}\BOm^{\ve}\cdot\CJ_{\ve}\aaa^{\ve}_2+\frac{\eta_2}{\chi_2}\CJ_{\ve}\A^{\ve}\cdot\CJ_{\ve}\sss^{\ve}_4\Big)\CJ_{\ve}\nn^{\ve}_3\Big], \Delta^s\nabla\times\big(\nabla\times\CJ_{\ve}\nn^{\ve}_1\cdot(\CJ_{\ve}\nn^{\ve}_j)^2\big)\Big\rangle}_{J_{13}}\\
&+\underbrace{\frac{1}{\chi_2}\big\langle\Delta^s\big(\CJ_{\ve}(\ML^{\ve}_2\CF_{Bi})\CJ_{\ve}\nn^{\ve}_3\big), \Delta^s\nabla\times\big(\nabla\times\CJ_{\ve}\nn^{\ve}_1\cdot(\CJ_{\ve}\nn^{\ve}_j)^2\big)\big\rangle}_{J_{14}}\\
&+C_{\delta}\CP\big(\|\nabla\vv^{\ve}\|_{L^{\infty}},\|\Fp^{\ve}\|_{L^{\infty}},\|\nabla\Fp^{\ve}\|_{L^{\infty}},\|\nabla^2\Fp^{\ve}\|_{L^{\infty}}\big)\|\nabla\Fp^{\ve}\|^2_{H^{2s}}\\
&+\delta\big(\|\nabla\CJ_{\ve}\vv^{\ve}\|^2_{H^{2s}}+\|\CJ_{\ve}\CH^{\Delta^s}_{2}\|^2_{L^2}+\|\CJ_{\ve}\CH^{\Delta^s}_{3}\|^2_{L^2}\big),
\end{align*}
where $\|\nabla\Fp^{\ve}\|^2_{H^{2s}}=\sum^3_{i=1}\|\nabla\nn^{\ve}_i\|^2_{H^{2s}}$, and we have used the following estimates:
\begin{align*}
&\|\Delta^s\CU^{\ve}\|_{L^2}
\leq C\CP\big(\|\nabla\vv^{\ve}\|_{L^{\infty}},\|\Fp^{\ve}\|_{L^{\infty}},\|\nabla\Fp^{\ve}\|_{L^{\infty}},\|\nabla^2\Fp^{\ve}\|_{L^{\infty}}\big)\Big(\|\nabla\Fp^{\ve}\|_{H^{2s}}\\
&\qquad\qquad\qquad+\|\nabla\CJ_{\ve}\vv^{\ve}\|_{H^{2s}}+\|\CJ_{\ve}\CH^{\Delta^s}_{2}\|_{L^2}+\|\CJ_{\ve}\CH^{\Delta^s}_{3}\|_{L^2}\Big),\\
&\Big|\Big\langle\Delta^s\CU^{\ve},\nabla\times\Big([\CJ_{\ve},(\CJ_{\ve}\nn^{\ve}_j)^2\cdot]\Delta^s(\nabla\times\nn^{\ve}_1)-[\Delta^s,(\CJ_{\ve}\nn^{\ve}_j)^2\cdot](\nabla\times\CJ_{\ve}\nn^{\ve}_1)\Big)\Big\rangle\Big|\\
&\quad\leq C\|\Delta^s\CU^{\ve}\|_{L^2}\Big(\|[\CJ_{\ve},(\CJ_{\ve}\nn^{\ve}_j)^2\cdot]\Delta^s\nabla\times(\nabla\times\nn^{\ve}_1)\|_{L^2}\\
&\qquad+\|[\Delta^s,(\CJ_{\ve}\nn^{\ve}_j)^2\cdot]\nabla\times(\nabla\times\CJ_{\ve}\nn^{\ve}_1)\|_{L^2}+\|\nn^{\ve}_j\|_{L^{\infty}}\|\nabla\nn^{\ve}_j\|_{L^{\infty}}\|\nabla\nn_1\|_{H^{2s}}\Big)\\
&\quad\leq C_{\delta}\CP\big(\|\nabla\vv^{\ve}\|_{L^{\infty}},\|\Fp^{\ve}\|_{L^{\infty}},\|\nabla\Fp^{\ve}\|_{L^{\infty}},\|\nabla^2\Fp^{\ve}\|_{L^{\infty}}\big)\|\nabla\Fp^{\ve}\|^2_{H^{2s}}\\
&\qquad+\delta\big(\|\nabla\CJ_{\ve}\vv^{\ve}\|^2_{H^{2s}}+\|\CJ_{\ve}\CH^{\Delta^s}_{2}\|^2_{L^2}+\|\CJ_{\ve}\CH^{\Delta^s}_{3}\|^2_{L^2}\big).
\end{align*}
Consequently, from (\ref{n1-K2-energy-estimate}) and the estimates of the terms $J_i(i=1,\cdots,4)$, one can obtain
\begin{align}\label{nj-Deltas-nn1-L2}
&\frac{1}{2}\frac{\ud}{\ud t}\|\CJ_{\ve}\nn_j\cdot\Delta^s(\nabla\times\nn^{\ve}_1)\|^2_{L^2}\nonumber\\
&\quad\leq J_{11}+J_{12}+J_{13}+J_{14}\nonumber\\
&\qquad+C_{\delta}\CP\big(\|\vv^{\ve}\|_{L^{\infty}},\|\nabla\vv^{\ve}\|_{L^{\infty}},\|\Fp^{\ve}\|_{L^{\infty}},\|\nabla\Fp^{\ve}\|_{L^{\infty}},\|\nabla^2\Fp^{\ve}\|_{L^{\infty}}\big)\CE_s\big(\Fp^{\ve},\vv^{\ve}\big)\nonumber\\
&\qquad+\delta\big(\|\nabla\CJ_{\ve}\vv^{\ve}\|^2_{H^{2s}}+\|\CJ_{\ve}\CH^{\Delta^s}_{2}\|^2_{L^2}+\|\CJ_{\ve}\CH^{\Delta^s}_{3}\|^2_{L^2}\big).
\end{align}
Then, using the definition of $\CJ_{\ve}\hh_1$ and (\ref{Deltas-CJ-hh-ve-comp}), we deduce that
\begin{align}\label{sum-4+beta+CI+CJ1-beta}
&\sum^4_{\beta=1}\Big(\gamma_1I_{\beta}+k_1I'_{\beta}+\sum^3_{j=1}k_{j1}J_{1\beta}\Big)\nonumber\\
&\quad=-\big\langle\Delta^s\CU^{\ve},\Delta^s\CJ_{\ve}\hh^{\ve}_1-\CG^{\ve}_1(\Fp^{\ve})\big\rangle\nonumber\\
&\quad\leq-\Big\langle\Big(\frac{1}{2}\CJ_{\ve}\Delta^s\BOm^{\ve}\cdot\CJ_{\ve}\aaa^{\ve}_1+\frac{\eta_3}{\chi_3}\CJ_{\ve}\Delta^s\A^{\ve}\cdot\CJ_{\ve}\sss^{\ve}_3\Big)\CJ_{\ve}\nn^{\ve}_2, \Delta^s\CJ_{\ve}\hh^{\ve}_1\Big\rangle\nonumber\\
&\qquad+\Big\langle\Big(\frac{1}{2}\CJ_{\ve}\Delta^s\BOm^{\ve}\cdot\CJ_{\ve}\aaa^{\ve}_2+\frac{\eta_2}{\chi_2}\CJ_{\ve}\Delta^s\A^{\ve}\cdot\CJ_{\ve}\sss^{\ve}_4\Big)\CJ_{\ve}\nn^{\ve}_3, \Delta^s\CJ_{\ve}\hh^{\ve}_1\Big\rangle\nonumber\\
&\qquad+\Big\langle-\frac{1}{\chi_2}(\CJ_{\ve}\CH^{\Delta^s}_2)\CJ_{\ve}\nn^{\ve}_3+\frac{1}{\chi_3}(\CJ_{\ve}\CH^{\Delta^s}_3)\CJ_{\ve}\nn^{\ve}_2,\Delta^s\CJ_{\ve}\hh^{\ve}_1\Big\rangle\nonumber\\
&\qquad+C\Big|\Big\langle\nabla_l[\Delta^s,\CJ_{\ve}\aaa^{\ve}_1\otimes\CJ_{\ve}\nn^{\ve}_2\cdot]\BOm^{\ve}+\nabla_l[\Delta^s,\CJ_{\ve}\aaa^{\ve}_2\otimes\CJ_{\ve}\nn^{\ve}_3\cdot]\BOm^{\ve},\Delta^{s-1}\nabla^l\CJ_{\ve}\hh^{\ve}_1\Big\rangle\Big|\nonumber\\
&\qquad+C\Big|\Big\langle\nabla_l[\Delta^s,\CJ_{\ve}\sss^{\ve}_3\otimes\CJ_{\ve}\nn^{\ve}_2\cdot]\A^{\ve}+\nabla_l[\Delta^s,\CJ_{\ve}\sss^{\ve}_4\otimes\CJ_{\ve}\nn^{\ve}_3\cdot]\A^{\ve},\Delta^{s-1}\nabla^l\CJ_{\ve}\hh^{\ve}_1\Big\rangle\Big|\nonumber\\
&\qquad+C\Big|\Big\langle\nabla_l[\Delta^s,\CJ_{\ve}\nn^{\ve}_2]\CJ_{\ve}(\ML^{\ve}_3\CF_{Bi})-\nabla_l[\Delta^s,\CJ_{\ve}\nn^{\ve}_3]\CJ_{\ve}(\ML^{\ve}_2\CF_{Bi}),\Delta^{s-1}\nabla^l\CJ_{\ve}\hh^{\ve}_1\Big\rangle\Big|\nonumber\\
&\qquad+\underbrace{\frac{1}{\chi_3}\Big\langle\big([\Delta^s,\CJ_{\ve}\nn^{\ve}_1\cdot]\hh^{\ve}_2-[\Delta^s,\CJ_{\ve}\nn^{\ve}_2\cdot]\hh^{\ve}_1\big)\CJ_{\ve}\nn^{\ve}_2,\Delta^s\CJ_{\ve}\hh^{\ve}_1\Big\rangle}_{\CB_1}\nonumber\\
&\qquad\underbrace{-\frac{1}{\chi_2}\Big\langle\big([\Delta^s,\CJ_{\ve}\nn^{\ve}_3\cdot]\hh^{\ve}_1-[\Delta^s,\CJ_{\ve}\nn^{\ve}_1\cdot]\hh^{\ve}_3\big)\CJ_{\ve}\nn^{\ve}_3,\Delta^s\CJ_{\ve}\hh^{\ve}_1\Big\rangle}_{\CB_2}\nonumber\\
&\qquad+\|\Delta^s\CU^{\ve}\|_{L^2}\|\CG^{\ve}_{1}(\Fp^{\ve})\|_{L^2}\nonumber\\
&\quad\leq-\Big\langle\Big(\frac{1}{2}\CJ_{\ve}\Delta^s\BOm^{\ve}\cdot\CJ_{\ve}\aaa^{\ve}_1+\frac{\eta_3}{\chi_3}\CJ_{\ve}\Delta^s\A^{\ve}\cdot\CJ_{\ve}\sss^{\ve}_3\Big)\CJ_{\ve}\nn^{\ve}_2, \Delta^s\CJ_{\ve}\hh^{\ve}_1\Big\rangle\nonumber\\
&\qquad+\Big\langle\Big(\frac{1}{2}\CJ_{\ve}\Delta^s\BOm^{\ve}\cdot\CJ_{\ve}\aaa^{\ve}_2+\frac{\eta_2}{\chi_2}\CJ_{\ve}\Delta^s\A^{\ve}\cdot\CJ_{\ve}\sss^{\ve}_4\Big)\CJ_{\ve}\nn^{\ve}_3, \Delta^s\CJ_{\ve}\hh^{\ve}_1\Big\rangle\nonumber\\
&\qquad+\Big\langle-\frac{1}{\chi_2}(\CJ_{\ve}\CH^{\Delta^s}_2)\CJ_{\ve}\nn^{\ve}_3+\frac{1}{\chi_3}(\CJ_{\ve}\CH^{\Delta^s}_3)\CJ_{\ve}\nn^{\ve}_2,\Delta^s\CJ_{\ve}\hh^{\ve}_1\Big\rangle\nonumber\\
&\qquad+\CB_1+\CB_2+C_{\delta}\CP\big(\|\nabla\vv^{\ve}\|_{L^{\infty}},\|\Fp^{\ve}\|_{L^{\infty}},\|\nabla\Fp^{\ve}\|_{L^{\infty}},\|\nabla^2\Fp^{\ve}\|_{L^{\infty}}\big)\|\nabla\Fp^{\ve}\|^2_{H^{2s}}\nonumber\\
&\qquad+\delta\big(\|\nabla\CJ_{\ve}\vv^{\ve}\|^2_{H^{2s}}+\|\CJ_{\ve}\CH^{\Delta^s}_{2}\|^2_{L^2}+\|\CJ_{\ve}\CH^{\Delta^s}_{3}\|^2_{L^2}\big),
\end{align}
where we have employed the estimate
\begin{align*}
\|\nabla^{s-1}\nabla_l\CJ_{\ve}\hh^{\ve}_1\|_{L^2}\leq C\CP\big(\|\Fp^{\ve}\|_{L^{\infty}},\|\nabla\Fp^{\ve}\|_{L^{\infty}},\|\nabla^2\Fp^{\ve}\|_{L^{\infty}}\big)\|\nabla\Fp^{\ve}\|^2_{H^{2s}}.
\end{align*}
Hence, combining (\ref{n1-higher-grid-L2})--(\ref{CI+CI1-5}) with (\ref{nj-Deltas-nn1-L2})--(\ref{sum-4+beta+CI+CJ1-beta}), we arrive at
\begin{align}\label{CE-n1}
&\frac{\ud}{\ud t}\CE^{s}_1(\Fp^{\ve})-\delta\big(\|\nabla\CJ_{\ve}\vv^{\ve}\|^2_{H^{2s}}+\|\CJ_{\ve}\CH^{\Delta^s}_{2}\|^2_{L^2}+\|\CJ_{\ve}\CH^{\Delta^s}_{3}\|^2_{L^2}\big)\nonumber\\
&\quad\leq-\Big\langle\Big(\frac{1}{2}\CJ_{\ve}\Delta^s\BOm^{\ve}\cdot\CJ_{\ve}\aaa^{\ve}_1+\frac{\eta_3}{\chi_3}\CJ_{\ve}\Delta^s\A^{\ve}\cdot\CJ_{\ve}\sss^{\ve}_3\Big)\CJ_{\ve}\nn^{\ve}_2, \Delta^s\CJ_{\ve}\hh^{\ve}_1\Big\rangle\nonumber\\
&\qquad+\Big\langle\Big(\frac{1}{2}\CJ_{\ve}\Delta^s\BOm^{\ve}\cdot\CJ_{\ve}\aaa^{\ve}_2+\frac{\eta_2}{\chi_2}\CJ_{\ve}\Delta^s\A^{\ve}\cdot\CJ_{\ve}\sss^{\ve}_4\Big)\CJ_{\ve}\nn^{\ve}_3, \Delta^s\CJ_{\ve}\hh^{\ve}_1\Big\rangle\nonumber\\
&\qquad+\Big\langle-\frac{1}{\chi_2}(\CJ_{\ve}\CH^{\Delta^s}_2)\CJ_{\ve}\nn^{\ve}_3+\frac{1}{\chi_3}(\CJ_{\ve}\CH^{\Delta^s}_3)\CJ_{\ve}\nn^{\ve}_2,\Delta^s\CJ_{\ve}\hh^{\ve}_1\Big\rangle+\CB_1+\CB_2\nonumber\\
&\qquad+C_{\delta}\CP\big(\|\vv^{\ve}\|_{L^{\infty}},\|\nabla\vv^{\ve}\|_{L^{\infty}},\|\Fp^{\ve}\|_{L^{\infty}},\|\nabla\Fp^{\ve}\|_{L^{\infty}},\|\nabla^2\Fp^{\ve}\|_{L^{\infty}}\big)\CE_s\big(\Fp^{\ve},\vv^{\ve}\big).
\end{align}
By an analysis similar to (\ref{CE-n1}), it holds that
\begin{align}
&\frac{\ud}{\ud t}\CE^{s}_2(\Fp^{\ve})-\delta\big(\|\nabla\CJ_{\ve}\vv^{\ve}\|^2_{H^{2s}}+\|\CJ_{\ve}\CH^{\Delta^s}_{1}\|^2_{L^2}+\|\CJ_{\ve}\CH^{\Delta^s}_{3}\|^2_{L^2}\big)\nonumber\\
&\quad\leq\Big\langle\Big(\frac{1}{2}\CJ_{\ve}\Delta^s\BOm^{\ve}\cdot\CJ_{\ve}\aaa^{\ve}_1+\frac{\eta_3}{\chi_3}\CJ_{\ve}\Delta^s\A^{\ve}\cdot\CJ_{\ve}\sss^{\ve}_3\Big)\CJ_{\ve}\nn^{\ve}_1, \Delta^s\CJ_{\ve}\hh^{\ve}_2\Big\rangle\nonumber\\
&\qquad-\Big\langle\Big(\frac{1}{2}\CJ_{\ve}\Delta^s\BOm^{\ve}\cdot\CJ_{\ve}\aaa^{\ve}_3+\frac{\eta_1}{\chi_1}\CJ_{\ve}\Delta^s\A^{\ve}\cdot\CJ_{\ve}\sss^{\ve}_5\Big)\CJ_{\ve}\nn^{\ve}_3, \Delta^s\CJ_{\ve}\hh^{\ve}_2\Big\rangle\nonumber\\
&\qquad+\Big\langle-\frac{1}{\chi_3}(\CJ_{\ve}\CH^{\Delta^s}_3)\CJ_{\ve}\nn^{\ve}_1+\frac{1}{\chi_1}(\CJ_{\ve}\CH^{\Delta^s}_1)\CJ_{\ve}\nn^{\ve}_3,\Delta^s\CJ_{\ve}\hh^{\ve}_2\Big\rangle+\CB'_1+\CB'_2\nonumber\\
&\qquad+C_{\delta}\CP\big(\|\vv^{\ve}\|_{L^{\infty}},\|\nabla\vv^{\ve}\|_{L^{\infty}},\|\Fp^{\ve}\|_{L^{\infty}},\|\nabla\Fp^{\ve}\|_{L^{\infty}},\|\nabla^2\Fp^{\ve}\|_{L^{\infty}}\big)\CE_s\big(\Fp^{\ve},\vv^{\ve}\big),\label{CE-n2}\\
&\frac{\ud}{\ud t}\CE^{s}_3(\Fp^{\ve})-\delta\big(\|\nabla\CJ_{\ve}\vv^{\ve}\|^2_{H^{2s}}+\|\CJ_{\ve}\CH^{\Delta^s}_{1}\|^2_{L^2}+\|\CJ_{\ve}\CH^{\Delta^s}_{2}\|^2_{L^2}\big)\nonumber\\
&\quad\leq-\Big\langle\Big(\frac{1}{2}\CJ_{\ve}\Delta^s\BOm^{\ve}\cdot\CJ_{\ve}\aaa^{\ve}_2+\frac{\eta_2}{\chi_2}\CJ_{\ve}\Delta^s\A^{\ve}\cdot\CJ_{\ve}\sss^{\ve}_4\Big)\CJ_{\ve}\nn^{\ve}_1, \Delta^s\CJ_{\ve}\hh^{\ve}_3\Big\rangle\nonumber\\
&\qquad+\Big\langle\Big(\frac{1}{2}\CJ_{\ve}\Delta^s\BOm^{\ve}\cdot\CJ_{\ve}\aaa^{\ve}_3+\frac{\eta_1}{\chi_1}\CJ_{\ve}\Delta^s\A^{\ve}\cdot\CJ_{\ve}\sss^{\ve}_5\Big)\CJ_{\ve}\nn^{\ve}_2, \Delta^s\CJ_{\ve}\hh^{\ve}_3\Big\rangle\nonumber\\
&\qquad+\Big\langle-\frac{1}{\chi_1}(\CJ_{\ve}\CH^{\Delta^s}_1)\CJ_{\ve}\nn^{\ve}_2+\frac{1}{\chi_2}(\CJ_{\ve}\CH^{\Delta^s}_2)\CJ_{\ve}\nn^{\ve}_1,\Delta^s\CJ_{\ve}\hh^{\ve}_3\Big\rangle+\CB''_1+\CB''_2\nonumber\\
&\qquad+C_{\delta}\CP\big(\|\vv^{\ve}\|_{L^{\infty}},\|\nabla\vv^{\ve}\|_{L^{\infty}},\|\Fp^{\ve}\|_{L^{\infty}},\|\nabla\Fp^{\ve}\|_{L^{\infty}},\|\nabla^2\Fp^{\ve}\|_{L^{\infty}}\big)\CE_s\big(\Fp^{\ve},\vv^{\ve}\big),\label{CE-n3}
\end{align}
where $\CB'_i,\CB''_i(i=1,2)$ are written as, respectively,
\begin{align*}
\CB'_1=&-\frac{1}{\chi_3}\Big\langle\big([\Delta^s,\CJ_{\ve}\nn^{\ve}_1\cdot]\hh^{\ve}_2-[\Delta^s,\CJ_{\ve}\nn^{\ve}_2\cdot]\hh^{\ve}_1\big)\CJ_{\ve}\nn^{\ve}_1,\Delta^s\CJ_{\ve}\hh^{\ve}_2\Big\rangle,\\
\CB'_2=&\frac{1}{\chi_1}\Big\langle\big([\Delta^s,\CJ_{\ve}\nn^{\ve}_2\cdot]\hh^{\ve}_3-[\Delta^s,\CJ_{\ve}\nn^{\ve}_3\cdot]\hh^{\ve}_2\big)\CJ_{\ve}\nn^{\ve}_3,\Delta^s\CJ_{\ve}\hh^{\ve}_2\Big\rangle,\\
\CB''_1=&-\frac{1}{\chi_1}\Big\langle\big([\Delta^s,\CJ_{\ve}\nn^{\ve}_2\cdot]\hh^{\ve}_3-[\Delta^s,\CJ_{\ve}\nn^{\ve}_3\cdot]\hh^{\ve}_2\big)\CJ_{\ve}\nn^{\ve}_2,\Delta^s\CJ_{\ve}\hh^{\ve}_3\Big\rangle,\\
\CB''_2=&\frac{1}{\chi_2}\Big\langle\big([\Delta^s,\CJ_{\ve}\nn^{\ve}_3\cdot]\hh^{\ve}_1-[\Delta^s,\CJ_{\ve}\nn^{\ve}_1\cdot]\hh^{\ve}_3\big)\CJ_{\ve}\nn^{\ve}_1,\Delta^s\CJ_{\ve}\hh^{\ve}_3\Big\rangle.
\end{align*}

From the definitions of $\CJ_{\ve}\CH^{\Delta^s}_i(i=1,2,3)$, direct calculations yield
\begin{align}\label{CB1-6}
&\sum^2_{\alpha=1}(\CB_{\alpha}+\CB'_{\alpha}+\CB''_{\alpha})\nonumber\\
&\quad=-\frac{1}{\chi_3}\big\langle[\Delta^s,\CJ_{\ve}\nn^{\ve}_1\cdot]\hh^{\ve}_2-[\Delta^s,\CJ_{\ve}\nn^{\ve}_2\cdot]\hh^{\ve}_1,\CJ_{\ve}\CH^{\Delta^s}_3\big\rangle\nonumber\\
&\qquad-\frac{1}{\chi_2}\big\langle[\Delta^s,\CJ_{\ve}\nn^{\ve}_3\cdot]\hh^{\ve}_1-[\Delta^s,\CJ_{\ve}\nn^{\ve}_1\cdot]\hh^{\ve}_3,\CJ_{\ve}\CH^{\Delta^s}_2\big\rangle\nonumber\\
&\qquad-\frac{1}{\chi_1}\big\langle[\Delta^s,\CJ_{\ve}\nn^{\ve}_2\cdot]\hh^{\ve}_3-[\Delta^s,\CJ_{\ve}\nn^{\ve}_3\cdot]\hh^{\ve}_2,\CJ_{\ve}\CH^{\Delta^s}_1\big\rangle\nonumber\\
&\quad\leq C_{\delta}\CP\big(\|\Fp^{\ve}\|_{L^{\infty}},\|\nabla\Fp^{\ve}\|_{L^{\infty}},\|\nabla^2\Fp^{\ve}\|_{L^{\infty}}\big)\|\nabla\Fp^{\ve}\|^2_{H^{2s}}+\delta\sum^3_{i=1}\|\CJ_{\ve}\CH^{\Delta^s}_i\|^2_{L^2}.
\end{align}

Combining (\ref{ve-energy-law})--(\ref{L-2-estimate-n1}) with (\ref{CE-n1})--(\ref{CB1-6}), and choosing $\delta>0$ small enough, we conclude that
\begin{align}\label{Fp-highderivative-L2+end}
&\frac{\ud}{\ud t}\Big(\frac{1}{2}\|\Fp^{\ve}-\Fp^{(0)}\|^2_{L^2}+\CF_{Bi}[\Fp^{\ve}]+\frac{1}{2}\|\vv^{\ve}\|^2_{L^2}+\sum^3_{i=1}\CE^s_i(\Fp^{\ve})\Big)\nonumber\\
&\qquad+\sum^3_{i=1}\frac{1}{2\chi_i}\|\CJ_{\ve}\CH^{\Delta^s}_i\|^2_{L^2}-\delta\|\nabla\CJ_{\ve}\vv_{\ve}\|_{H^{2s}}^2\nonumber\\
&\quad\leq\Big\langle\frac{\eta_3}{\chi_3}(\CJ_{\ve}\CH^{\Delta^s}_3)\CJ_{\ve}\sss^{\ve}_3+\frac{\eta_2}{\chi_2}(\CJ_{\ve}\CH^{\Delta^s}_2)\CJ_{\ve}\sss^{\ve}_4+\frac{\eta_1}{\chi_1}(\CJ_{\ve}\CH^{\Delta^s}_1)\CJ_{\ve}\sss^{\ve}_5,\Delta^s\CJ_{\ve}\A^{\ve}\Big\rangle\nonumber\\
&\qquad+\frac{1}{2}\Big\langle(\CJ_{\ve}\CH^{\Delta^s}_3)\CJ_{\ve}\aaa^{\ve}_1+(\CJ_{\ve}\CH^{\Delta^s}_2)\CJ_{\ve}\aaa^{\ve}_2+(\CJ_{\ve}\CH^{\Delta^s}_1)\CJ_{\ve}\aaa^{\ve}_3,\Delta^s\CJ_{\ve}\BOm^{\ve}\Big\rangle\nonumber\\
&\qquad+C_{\delta}\CP\Big(\|\vv^{\ve}\|_{L^{\infty}},\|\nabla\vv^{\ve}\|_{L^{\infty}},\|\Fp^{\ve}\|_{L^{\infty}},\|\nabla\Fp^{\ve}\|_{L^{\infty}},\|\nabla^2\Fp^{\ve}\|_{L^{\infty}}\Big)\CE_s(\Fp^{\ve},\vv^{\ve}).
\end{align}

Next, we turn to the estimate of the higher order derivatives for $\vv^{\ve}$. Acting the operator $\Delta^s$ on the equation (\ref{v-ve-equation}) and taking the inner product with $\Delta^s\vv^{\ve}$, we deduce that
\begin{align}\label{v-highderivative-L2}
&\frac{1}{2}\frac{\ud}{\ud t}\|\Delta^s\vv^{\ve}\|^2_{L^2}+\eta\|\nabla\Delta^s\CJ_{\ve}\vv^{\ve}\|^2_{L^2}\nonumber\\
&\quad=-\big\langle\Delta^s(\CJ_{\ve}\vv^{\ve}\cdot\nabla\CJ_{\ve}\vv^{\ve}),\Delta^s\nabla\CJ_{\ve}\vv^{\ve}\big\rangle-\big\langle\Delta^s(\CJ_{\ve}\sigma^{\ve}),\Delta^s\nabla\CJ_{\ve}\vv^{\ve}\big\rangle\nonumber\\
&\qquad+\langle\Delta^s\mathfrak{F}^{\ve},\Delta^s\CJ_{\ve}\vv^{\ve}\rangle,\nonumber\\
&\quad\eqdefa I+II+III.
\end{align}
We deal with (\ref{v-highderivative-L2}) term by term. It follows from Lemma \ref{product-estimate-lemma} that
\begin{align*}
I=\langle[\Delta^s,\CJ_{\ve}\vv^{\ve}\cdot]\nabla\CJ_{\ve}\vv^{\ve},\Delta^s\nabla\CJ_{\ve}\vv^{\ve}\rangle
\leq C\|\nabla\vv^{\ve}\|_{L^{\infty}}\|\vv^{\ve}\|^2_{H^{2s}}.
\end{align*}
Using the definition of the stress $\CJ_{\ve}\sigma^{\ve}$, the second term in (\ref{v-highderivative-L2}) can be rewritten as
\begin{align}\label{III-1234}
II\eqdefa&II_1+II_2+II_3+II_4,
\end{align}
where
\begin{align*}
II_1=&-\Big\langle\beta_1\Delta^s\big((\CJ_{\ve}\A^{\ve}\cdot\CJ_{\ve}\sss^{\ve}_1)\CJ_{\ve}\sss^{\ve}_1\big)
+\beta_0\Delta^s\big((\CJ_{\ve}\A^{\ve}\cdot\CJ_{\ve}\sss^{\ve}_2)\CJ_{\ve}\sss^{\ve}_1\big)\\
&\qquad+\beta_0\Delta^s\big((\CJ_{\ve}\A^{\ve}\cdot\CJ_{\ve}\sss^{\ve}_1)\CJ_{\ve}\sss^{\ve}_2\big)+\beta_2\Delta^s\big((\CJ_{\ve}\A^{\ve}\cdot\CJ_{\ve}\sss^{\ve}_2)\CJ_{\ve}\sss^{\ve}_2\big),\Delta^s\CJ_{\ve}\A^{\ve}\Big\rangle,\\
II_2=&-\Big\langle\Big(\beta_3-\frac{\eta^2_3}{\chi_3}\Big)\Delta^s\big((\CJ_{\ve}\A^{\ve}\cdot\CJ_{\ve}\sss^{\ve}_3)\CJ_{\ve}\sss^{\ve}_3\big)+\Big(\beta_4-\frac{\eta^2_2}{\chi_2}\Big)\Delta^s\big((\CJ_{\ve}\A^{\ve}\cdot\CJ_{\ve}\sss^{\ve}_4)\CJ_{\ve}\sss^{\ve}_4\big)\\
&\qquad+\Big(\beta_5-\frac{\eta^2_1}{\chi_1}\Big)\Delta^s\big((\CJ_{\ve}\A^{\ve}\cdot\CJ_{\ve}\sss^{\ve}_5)\CJ_{\ve}\sss^{\ve}_5\big), \Delta^s\CJ_{\ve}\A^{\ve}\Big\rangle,\\
II_3=&-\Big\langle\frac{\eta_3}{\chi_3}\Delta^s\big(\CJ_{\ve}(\ML^{\ve}_3\CF_{Bi})\CJ_{\ve}\sss^{\ve}_3\big)+\frac{\eta_2}{\chi_2}\Delta^s\big(\CJ_{\ve}(\ML^{\ve}_2\CF_{Bi})\CJ_{\ve}\sss^{\ve}_4\big)\\
&\qquad+\frac{\eta_1}{\chi_1}\Delta^s\big(\CJ_{\ve}(\ML^{\ve}_1\CF_{Bi})\CJ_{\ve}\sss^{\ve}_5\big),\Delta^s\CJ_{\ve}\A^{\ve}\Big\rangle,\\
II_4=&-\frac{1}{2}\Big\langle\Delta^s\big(\CJ_{\ve}(\ML^{\ve}_3\CF_{Bi})\CJ_{\ve}\aaa^{\ve}_1+\CJ_{\ve}(\ML^{\ve}_2\CF_{Bi})\CJ_{\ve}\aaa^{\ve}_2+\CJ_{\ve}(\ML^{\ve}_1\CF_{Bi})\CJ_{\ve}\aaa^{\ve}_3\big),\Delta^s\CJ_{\ve}\BOm^{\ve}\Big\rangle.
\end{align*}
By the symmetry of $II_1+II_2$, the relation $\beta^2_0\leq\beta_1\beta_2$ in (\ref{coefficient-conditions}) and Lemma \ref{product-estimate-lemma}, we have
\begin{align*}
&II_1+II_2\nonumber\\
&\quad\leq\underbrace{-\Big(\sum^2_{j=1}\beta_j\|\Delta^s\CJ_{\ve}\A^{\ve}\cdot\CJ_{\ve}\sss^{\ve}_j\|^2_{L^2}+2\beta_0\int_{\mathbb{R}^d}(\Delta^s\CJ_{\ve}\A^{\ve}\cdot\CJ_{\ve}\sss^{\ve}_1)(\Delta^s\CJ_{\ve}\A^{\ve}\cdot\CJ_{\ve}\sss^{\ve}_2)\ud\xx\Big)}_{\leq 0}\\
&\qquad-\Big(\beta_3-\frac{\eta^2_3}{\chi_3}\Big)\|\Delta^s\CJ_{\ve}\A^{\ve}\cdot\CJ_{\ve}\sss^{\ve}_3\|^2_{L^2}
-\Big(\beta_4-\frac{\eta^2_2}{\chi_2}\Big)\|\Delta^s\CJ_{\ve}\A^{\ve}\cdot\CJ_{\ve}\sss^{\ve}_4\|^2_{L^2}\\
&\qquad
-\Big(\beta_5-\frac{\eta^2_1}{\chi_1}\Big)\|\Delta^s\CJ_{\ve}\A^{\ve}\cdot\CJ_{\ve}\sss^{\ve}_5\|^2_{L^2}\\
&\qquad+C\sum^5_{\substack{i,j=1;\\i=j~\text{if}~ i,j\geq3}}\Big(\|[\Delta^s,\CJ_{\ve}\sss^{\ve}_i](\CJ_{\ve}\A^{\ve}\cdot\CJ_{\ve}\sss^{\ve}_j)\|_{L^2}+\|\sss^{\ve}_i\|_{L^{\infty}}\|[\Delta^s,\CJ_{\ve}\sss^{\ve}_j\cdot](\CJ_{\ve}\A^{\ve})\|_{L^2}\Big)\|\nabla\CJ_{\ve}\vv^{\ve}\|_{H^{2s}}\\
&\quad\leq C_{\delta}\CP\big(\|\nabla\vv^{\ve}\|_{L^{\infty}},\|\Fp^{\ve}\|_{L^{\infty}},\|\nabla\Fp^{\ve}\|_{L^{\infty}}\big)\big(\|\vv^{\ve}\|^2_{H^{2s}}+\|\nabla\Fp^{\ve}\|^2_{H^{2s}}\big)+\delta\|\nabla\CJ_{\ve}\vv^{\ve}\|^2_{H^{2s}}.
\end{align*}
The same techniques can be applied to deal with the latter two term in (\ref{III-1234}). Retaining the higher derivative terms and using Lemma \ref{product-estimate-lemma} leads to
\begin{align*}
&II_3+II_4\\
&\quad\leq -\Big\langle\frac{\eta_3}{\chi_3}(\CJ_{\ve}\CH^{\Delta^s}_3)\CJ_{\ve}\sss^{\ve}_3+\frac{\eta_2}{\chi_2}(\CJ_{\ve}\CH^{\Delta^s}_2)\CJ_{\ve}\sss^{\ve}_4+\frac{\eta_1}{\chi_1}(\CJ_{\ve}\CH^{\Delta^s}_1)\CJ_{\ve}\sss^{\ve}_5,\Delta^s\CJ_{\ve}\A^{\ve}\Big\rangle\\
&\qquad-\frac{1}{2}\Big\langle(\CJ_{\ve}\CH^{\Delta^s}_3)\CJ_{\ve}\aaa^{\ve}_1+(\CJ_{\ve}\CH^{\Delta^s}_2)\CJ_{\ve}\aaa^{\ve}_2+(\CJ_{\ve}\CH^{\Delta^s}_1)\CJ_{\ve}\aaa^{\ve}_3,\Delta^s\CJ_{\ve}\BOm^{\ve}\Big\rangle\\
&\qquad+C_{\delta}\CP\big(\|\Fp^{\ve}\|_{L^{\infty}},\|\nabla\Fp^{\ve}\|_{L^{\infty}},\|\nabla^2\Fp^{\ve}\|_{L^{\infty}}\big)\|\nabla\Fp^{\ve}\|^2_{H^{2s}}+\delta\|\nabla\CJ_{\ve}\vv^{\ve}\|^2_{H^{2s}}.
\end{align*}
For the term $III$, integrating by parts and using Lemma \ref{product-estimate-lemma} gives
\begin{align*}
III\leq C_{\delta}\CP\big(\|\Fp^{\ve}\|_{L^{\infty}},\|\nabla\Fp^{\ve}\|_{L^{\infty}}\big)\|\nabla\Fp^{\ve}\|^2_{H^{2s}}+\delta\|\nabla\CJ_{\ve}\vv^{\ve}\|^2_{H^{2s}}.
\end{align*}
Summing up the above estimates from $I$ to $III$, we immediately obtain from (\ref{v-highderivative-L2}) that
\begin{align}\label{v-highderivative-L2+end}
&\frac{1}{2}\frac{\ud}{\ud t}\|\Delta^s\vv^{\ve}\|^2_{L^2}+(\eta-\delta)\|\nabla\Delta^s\CJ_{\ve}\vv^{\ve}\|^2_{L^2}\nonumber\\
&\quad\leq -\Big\langle\frac{\eta_3}{\chi_3}(\CJ_{\ve}\CH^{\Delta^s}_3)\CJ_{\ve}\sss^{\ve}_3+\frac{\eta_2}{\chi_2}(\CJ_{\ve}\CH^{\Delta^s}_2)\CJ_{\ve}\sss^{\ve}_4+\frac{\eta_1}{\chi_1}(\CJ_{\ve}\CH^{\Delta^s}_1)\CJ_{\ve}\sss^{\ve}_5,\Delta^s\CJ_{\ve}\A^{\ve}\Big\rangle\nonumber\\
&\qquad-\frac{1}{2}\Big\langle(\CJ_{\ve}\CH^{\Delta^s}_3)\CJ_{\ve}\aaa^{\ve}_1+(\CJ_{\ve}\CH^{\Delta^s}_2)\CJ_{\ve}\aaa^{\ve}_2+(\CJ_{\ve}\CH^{\Delta^s}_1)\CJ_{\ve}\aaa^{\ve}_3,\Delta^s\CJ_{\ve}\BOm^{\ve}\Big\rangle\nonumber\\
&\qquad+C_{\delta}\CP\big(\|\vv^{\ve}\|_{L^{\infty}},\|\nabla\vv^{\ve}\|_{L^{\infty}},\|\Fp^{\ve}\|_{L^{\infty}},\|\nabla\Fp^{\ve}\|_{L^{\infty}},\|\nabla^2\Fp^{\ve}\|_{L^{\infty}}\big)\CE_s(\Fp^{\ve},\vv^{\ve}).
\end{align}
Therefore, using (\ref{Fp-highderivative-L2+end}) and (\ref{v-highderivative-L2+end}), discarding the cancellation terms and choosing $\delta>0$ small enough,  we may obtain
\begin{align}\label{energy-equality-total}
\frac{\ud}{\ud t}\CE_s(\Fp^{\ve},\vv^{\ve})+\eta\|\nabla\vv^{\ve}\|^2_{L^2}+\eta\|\nabla\Delta^s\vv^{\ve}\|^2_{L^2}\leq F\big(\CE_s(\Fp^{\ve},\vv^{\ve})\big),
\end{align}
where $F$ is an increasing function with $F(0)=0$. For $s\geq 2$, by virtue of (\ref{energy-equality-total}), then there exists $T>0$ depending only on $\CE_s\big(\Fp^{(0)},\vv^{(0)}\big)$ such that, for any $t\in [0,\min(T,T_{\ve})]$,
\begin{align*}
\CE_s(\Fp^{\ve},\vv^{\ve})+\eta\|\nabla\vv^{\ve}\|^2_{L^2}+\eta\|\nabla\Delta^s\vv^{\ve}\|^2_{L^2}\leq 2\CE_s\big(\Fp^{(0)},\vv^{(0)}\big).
\end{align*}
By a continuous argument, we deduce that $T_{\ve}\geq T$. Hence, we get a uniform estimate for the approximate solution on $[0,T]$. Furthermore, the existence of the solution can be obtained by the standard compactness argument.

{\bf Step 3}. {\it Uniqueness of the solution}.
Assume that $\big(\Fp^{(1)},\vv^{(1)}\big)$ and $\big(\Fp^{(2)},\vv^{(2)}\big)$ are two solutions to the system (\ref{new-frame-equation-n1})--(\ref{imcompressible-v}) with the same initial data, where the orthonormal frames $\Fp^{(i)}=\big(\nn^{(i)}_1,\nn^{(i)}_2,\nn^{(i)}_3\big) (i=1,2)$.

We set
\begin{align*}
\delta_{\nn_1}=&\nn^{(1)}_1-\nn^{(2)}_1,\quad\delta_{\nn_2}=\nn^{(1)}_2-\nn^{(2)}_2,\quad\delta_{\nn_3}=\nn^{(1)}_3-\nn^{(2)}_3,\\
\delta_{\vv}=&\vv^{(1)}-\vv^{(2)},\quad\delta_{\A}=\A^{(1)}-\A^{(2)},\quad\delta_{\BOm}=\BOm^{(1)}-\BOm^{(2)},\\
\delta_{\hh_1}=&\hh^{(1)}_1-\hh^{(2)}_1,\quad\delta_{\hh_2}=\hh^{(1)}_2-\hh^{(2)}_2,\quad\delta_{\hh_3}=\hh^{(1)}_3-\hh^{(2)}_3,\\
\delta_{\sss_1}=&\sss^{(1)}_1-\sss^{(2)}_1,\quad\delta_{\sss_2}=\sss^{(1)}_2-\sss^{(2)}_2,\quad\delta_{\sss_3}=\sss^{(1)}_3-\sss^{(2)}_3,\\
\delta_{\sss_4}=&\sss^{(1)}_4-\sss^{(2)}_4,\quad\delta_{\sss_5}=\sss^{(1)}_5-\sss^{(2)}_5,\quad\delta_{\aaa_1}=\aaa^{(1)}_1-\aaa^{(2)}_1,\\
\delta_{\aaa_2}=&\aaa^{(1)}_2-\aaa^{(2)}_2,\quad\delta_{\aaa_3}=\aaa^{(1)}_3-\aaa^{(2)}_3,
\quad\delta_{p}=p^{(1)}-p^{(2)},\\
\delta_{\ML_1}=&\nn^{(1)}_2\cdot\delta_{\hh_3}-\nn^{(1)}_3\cdot\delta_{\hh_2},\quad\delta_{\ML_2}=\nn^{(1)}_3\cdot\delta_{\hh_1}-\nn^{(1)}_1\cdot\delta_{\hh_3},\\
\delta_{\ML_3}=&\nn^{(1)}_1\cdot\delta_{\hh_2}-\nn^{(1)}_2\cdot\delta_{\hh_1}.
\end{align*}
To control the higher order derivative terms, we introduce the following functional:
\begin{align}
\widetilde{\CF}_{Bi}[\Fp^{(1)},\nabla\delta_{\Fp}]=&\int_{\mathbb{R}^d}\Big(\frac{1}{2}\sum^3_{i=1}\gamma_i|\nabla\delta_{\nn_i}|^2+W\big(\Fp^{(1)},\nabla\delta_{\Fp}\big)\Big)\ud\xx,
\end{align}
where $W\big(\Fp^{(1)},\nabla\delta_{\Fp}\big)$ are given by
\begin{align*}
W\big(\Fp^{(1)},\nabla\delta_{\Fp}\big)=\frac{1}{2}\Big(\sum^3_{i=1}k_i(\nabla\cdot\delta_{\nn_i})^2+\sum^3_{i,j=1}k_{ij}\big(\nn^{(1)}_i\cdot\nabla\times\delta_{\nn_j}\big)^2\Big).
\end{align*}
Then we have
\begin{align*}
\delta_{\hh_j}=\hh^{(1)}_j-\hh^{(2)}_j=-\frac{\delta\widetilde{\CF}_{Bi}}{\delta(\delta_{\nn_j})}+\tilde{\delta}_{\hh_j},\quad j=1,2,3,
\end{align*}
where
\begin{align*}
\|(\tilde{\delta}_{\hh_1},\tilde{\delta}_{\hh_2},\tilde{\delta}_{\hh_3})\|_{L^2}\leq C \Big(\|\delta_{\vv}\|_{L^2}+\sum^3_{i=1}\|\delta_{\nn_i}\|_{L^2}\Big).
\end{align*}

By taking the difference between the equations for $\big(\Fp^{(1)},\vv^{(1)}\big)$ and $\big(\Fp^{(2)},\vv^{(2)}\big)$, we find that
\begin{align}
\frac{\partial\delta_{\nn_1}}{\partial t}+\vv^{(1)}\cdot\nabla\delta_{\nn_1}=&\Big(\frac{1}{2}\delta_{\BOm}\cdot\aaa^{(1)}_1+\frac{\eta_3}{\chi_3}\delta_{\A}\cdot\sss^{(1)}_3-\frac{1}{\chi_3}\delta_{\ML_3}\Big)\nn^{(1)}_2\nonumber\\
&-\Big(\frac{1}{2}\delta_{\BOm}\cdot\aaa^{(1)}_2+\frac{\eta_2}{\chi_2}\delta_{\A}\cdot\sss^{(1)}_4-\frac{1}{\chi_2}\delta_{\ML_2}\Big)\nn^{(1)}_3+\delta_{\FF_1},\label{n1-uniquence-equ}\\
\frac{\partial\delta_{\nn_2}}{\partial t}+\vv^{(1)}\cdot\nabla\delta_{\nn_2}=&-\Big(\frac{1}{2}\delta_{\BOm}\cdot\aaa^{(1)}_1+\frac{\eta_3}{\chi_3}\delta_{\A}\cdot\sss^{(1)}_3-\frac{1}{\chi_3}\delta_{\ML_3}\Big)\nn^{(1)}_1\nonumber\\
&+\Big(\frac{1}{2}\delta_{\BOm}\cdot\aaa^{(1)}_3+\frac{\eta_1}{\chi_1}\delta_{\A}\cdot\sss^{(1)}_5-\frac{1}{\chi_1}\delta_{\ML_1}\Big)\nn^{(1)}_3+\delta_{\FF_2},\label{n2-uniquence-equ}\\
\frac{\partial\delta_{\nn_3}}{\partial t}+\vv^{(1)}\cdot\nabla\delta_{\nn_3}=&\Big(\frac{1}{2}\delta_{\BOm}\cdot\aaa^{(1)}_2+\frac{\eta_2}{\chi_2}\delta_{\A}\cdot\sss^{(1)}_4-\frac{1}{\chi_2}\delta_{\ML_2}\Big)\nn^{(1)}_1\nonumber\\
&-\Big(\frac{1}{2}\delta_{\BOm}\cdot\aaa^{(1)}_3+\frac{\eta_1}{\chi_1}\delta_{\A}\cdot\sss^{(1)}_5-\frac{1}{\chi_1}\delta_{\ML_1}\Big)\nn^{(1)}_2+\delta_{\FF_3},\label{n3-uniquence-equ}\\
\frac{\partial\delta_{\vv}}{\partial t}+\vv^{(1)}\cdot\nabla\delta_{\vv}=&-\nabla\delta_{p}+\eta\Delta\delta_{\vv}+\nabla\cdot \sigma(\Fp^{(1)},\delta_{\vv})+\delta_{\mathfrak{F}}+\nabla\cdot\delta_{\FF_4}+\delta_{\FF_5},\label{v-uniquence-equ}\\
\nabla\cdot\delta_{\vv}=&0,\label{incomp-uniquence-equ}
\end{align}
where the stress $\sigma(\Fp^{(1)},\delta_{\vv})$ is expressed by
\begin{align*}
\sigma(\Fp^{(1)},\delta_{\vv})=&\beta_1(\delta_{\A}\cdot\sss^{(1)}_1)\sss^{(1)}_1+\beta_0(\delta_{\A}\cdot\sss^{(1)}_2)\sss^{(1)}_1+\beta_0(\delta_{\A}\cdot\sss^{(1)}_1)\sss^{(1)}_2\\
&+\beta_2(\delta_{\A}\cdot\sss^{(1)}_2)\sss^{(1)}_2+\Big(\beta_3-\frac{\eta^2_3}{\chi_3}\Big)(\delta_{\A}\cdot\sss^{(1)}_3)\sss^{(1)}_3+\Big(\beta_4-\frac{\eta^2_2}{\chi_2}\Big)(\delta_{\A}\cdot\sss^{(1)}_4)\sss^{(1)}_4\\
&+\Big(\beta_5-\frac{\eta^2_1}{\chi_1}\Big)(\delta_{\A}\cdot\sss^{(1)}_5)\sss^{(1)}_5+\frac{\eta_3}{\chi_3}\sss^{(1)}_3\delta_{\ML_3}+\frac{\eta_2}{\chi_2}\sss^{(1)}_4\delta_{\ML_2}+\frac{\eta_1}{\chi_1}\sss^{(1)}_5\delta_{\ML_1}\\
&-\frac{1}{2}\big(\aaa^{(1)}_1\delta_{\ML_3}+\aaa^{(1)}_2\delta_{\ML_2}+\aaa^{(1)}_3\delta_{\ML_1}\big).
\end{align*}
The body force $\delta_{\mathfrak{F}}$ is given by
\begin{align*}
(\delta_{\mathfrak{F}})_i=&\partial_i\nn^{(1)}_1\cdot\nn^{(1)}_2\delta_{\ML_3}+\partial_i\nn^{(1)}_3\cdot\nn^{(1)}_1\delta_{\ML_2}+\partial_i\nn^{(1)}_2\cdot\nn^{(1)}_3\delta_{\ML_1}.
\end{align*}
Moreover, $\delta_{\FF_i}(i=1,\cdots,5)$ can be given by
\begin{align*}
\delta_{\FF_1}=&-\delta_{\vv}\cdot\nabla\nn^{(2)}_1+\Big(\frac{1}{2}\BOm^{(2)}\cdot\delta_{\aaa_1}+\frac{\eta_3}{\chi_3}\A^{(2)}\cdot\delta_{\sss_3}-\frac{1}{\chi_3}(\delta_{\nn_1}\cdot\hh^{(2)}_2-\delta_{\nn_2}\cdot\hh^{(2)}_1)\Big)\nn^{(1)}_2\nonumber\\
&+\Big(\frac{1}{2}\BOm^{(2)}\cdot\aaa^{(2)}_1+\frac{\eta_3}{\chi_3}\A^{(2)}\cdot\sss^{(2)}_3-\frac{1}{\chi_3}\ML^{(2)}_3\CF_{Bi}\Big)\delta_{\nn_2}\nonumber\\
&-\Big(\frac{1}{2}\BOm^{(2)}\cdot\delta_{\aaa_2}+\frac{\eta_2}{\chi_2}\A^{(2)}\cdot\delta_{\sss_4}-\frac{1}{\chi_2}(\delta_{\nn_3}\cdot\hh^{(2)}_1-\delta_{\nn_1}\cdot\hh^{(2)}_3)\Big)\nn^{(1)}_3\nonumber\\
&-\Big(\frac{1}{2}\BOm^{(2)}\cdot\aaa^{(2)}_2+\frac{\eta_2}{\chi_2}\A^{(2)}\cdot\sss^{(2)}_4-\frac{1}{\chi_2}\ML^{(2)}_2\CF_{Bi}\Big)\delta_{\nn_3},\\
\delta_{\FF_2}=&-\delta_{\vv}\cdot\nabla\nn^{(2)}_2-\Big(\frac{1}{2}\BOm^{(2)}\cdot\delta_{\aaa_1}+\frac{\eta_3}{\chi_3}\A^{(2)}\cdot\delta_{\sss_3}-\frac{1}{\chi_3}(\delta_{\nn_1}\cdot\hh^{(2)}_2-\delta_{\nn_2}\cdot\hh^{(2)}_1)\Big)\nn^{(1)}_1\nonumber\\
&-\Big(\frac{1}{2}\BOm^{(2)}\cdot\aaa^{(2)}_1+\frac{\eta_3}{\chi_3}\A^{(2)}\cdot\sss^{(2)}_3-\frac{1}{\chi_3}\ML^{(2)}_3\CF_{Bi}\Big)\delta_{\nn_1}\nonumber\\
&+\Big(\frac{1}{2}\BOm^{(2)}\cdot\delta_{\aaa_3}+\frac{\eta_1}{\chi_1}\A^{(2)}\cdot\delta_{\sss_5}-\frac{1}{\chi_1}(\delta_{\nn_2}\cdot\hh^{(2)}_3-\delta_{\nn_3}\cdot\hh^{(2)}_2)\Big)\nn^{(1)}_3\nonumber\\
&+\Big(\frac{1}{2}\BOm^{(2)}\cdot\aaa^{(2)}_3+\frac{\eta_1}{\chi_1}\A^{(2)}\cdot\sss^{(2)}_5-\frac{1}{\chi_1}\ML^{(2)}_1\CF_{Bi}\Big)\delta_{\nn_3},\\
\delta_{\FF_3}=&-\delta_{\vv}\cdot\nabla\nn^{(2)}_3+\Big(\frac{1}{2}\BOm^{(2)}\cdot\delta_{\aaa_2}+\frac{\eta_2}{\chi_2}\A^{(2)}\cdot\delta_{\sss_4}-\frac{1}{\chi_2}(\delta_{\nn_3}\cdot\hh^{(2)}_1-\delta_{\nn_1}\cdot\hh^{(2)}_3)\Big)\nn^{(1)}_1\nonumber\\
&+\Big(\frac{1}{2}\BOm^{(2)}\cdot\aaa^{(2)}_2+\frac{\eta_2}{\chi_2}\A^{(2)}\cdot\sss^{(2)}_4-\frac{1}{\chi_2}\ML^{(2)}_2\CF_{Bi}\Big)\delta_{\nn_1}\nonumber\\
&-\Big(\frac{1}{2}\BOm^{(2)}\cdot\delta_{\aaa_3}+\frac{\eta_1}{\chi_1}\A^{(2)}\cdot\delta_{\sss_5}-\frac{1}{\chi_1}(\delta_{\nn_2}\cdot\hh^{(2)}_3-\delta_{\nn_3}\cdot\hh^{(2)}_2)\Big)\nn^{(1)}_2\nonumber\\
&-\Big(\frac{1}{2}\BOm^{(2)}\cdot\aaa^{(2)}_3+\frac{\eta_1}{\chi_1}\A^{(2)}\cdot\sss^{(2)}_5-\frac{1}{\chi_1}\ML^{(2)}_1\CF_{Bi}\Big)\delta_{\nn_2},\\
\delta_{\FF_4}=&-\delta_{\vv}\otimes\vv^{(2)}
+\beta_1\big((\A^{(2)}\cdot\delta_{\sss_1})\sss^{(1)}_1+(\A^{(2)}\cdot\sss^{(2)}_1)\delta_{\sss_1}\big)\\
&+\beta_0\big((\A^{(2)}\cdot\delta_{\sss_2})\sss^{(1)}_1+(\A^{(2)}\cdot\sss^{(2)}_2)\delta_{\sss_1}\big)
+\beta_0\big((\A^{(2)}\cdot\delta_{\sss_1})\sss^{(1)}_2+(\A^{(2)}\cdot\sss^{(2)}_1)\delta_{\sss_2}\big)\\
&+\beta_2\big((\A^{(2)}\cdot\delta_{\sss_2})\sss^{(1)}_2+(\A^{(2)}\cdot\sss^{(2)}_2)\delta_{\sss_2}\big)
+\Big(\beta_3-\frac{\eta^2_3}{\chi_3}\Big)\big((\A^{(2)}\cdot\delta_{\sss_3})\sss^{(1)}_3+(\A^{(2)}\cdot\sss^{(2)}_3)\delta_{\sss_3}\big)\\
&+\Big(\beta_4-\frac{\eta^2_2}{\chi_2}\Big)\big((\A^{(2)}\cdot\delta_{\sss_4})\sss^{(1)}_4+(\A^{(2)}\cdot\sss^{(2)}_4)\delta_{\sss_4}\big)\\
&+\Big(\beta_5-\frac{\eta^2_1}{\chi_1}\Big)\big((\A^{(2)}\cdot\delta_{\sss_5})\sss^{(1)}_5+(\A^{(2)}\cdot\sss^{(2)}_5)\delta_{\sss_5}\big)\\
&+\frac{\eta_3}{\chi_3}\big(\sss^{(1)}_3(\delta_{\nn_1}\cdot\hh^{(2)}_2-\delta_{\nn_2}\cdot\hh^{(2)}_1)+\delta_{\sss_3}(\ML^{(2)}_3\CF_{Bi})\big)\\
&+\frac{\eta_2}{\chi_2}\big(\sss^{(1)}_4(\delta_{\nn_3}\cdot\hh^{(2)}_1-\delta_{\nn_1}\cdot\hh^{(2)}_3)+\delta_{\sss_4}(\ML^{(2)}_2\CF_{Bi})\big)\\
&+\frac{\eta_1}{\chi_1}\big(\sss^{(1)}_5(\delta_{\nn_2}\cdot\hh^{(2)}_3-\delta_{\nn_3}\cdot\hh^{(2)}_2)+\delta_{\sss_5}(\ML^{(2)}_1\CF_{Bi})\big)\\
&-\frac{1}{2}\big(\aaa^{(1)}_1(\delta_{\nn_1}\cdot\hh^{(2)}_2-\delta_{\nn_2}\cdot\hh^{(2)}_1)+\delta_{\aaa_1}(\ML^{(2)}_3\CF_{Bi})\big)\\
&-\frac{1}{2}\big(\aaa^{(1)}_2(\delta_{\nn_3}\cdot\hh^{(2)}_1-\delta_{\nn_1}\cdot\hh^{(2)}_3)+\delta_{\aaa_2}(\ML^{(2)}_2\CF_{Bi})\big)\\
&-\frac{1}{2}\big(\aaa^{(1)}_3(\delta_{\nn_2}\cdot\hh^{(2)}_3-\delta_{\nn_3}\cdot\hh^{(2)}_2)+\delta_{\aaa_3}(\ML^{(2)}_1\CF_{Bi})\big),\\
\delta_{\FF_5}=&\,
\partial_i\nn^{(1)}_1\cdot\nn^{(1)}_2(\delta_{\nn_1}\cdot\hh^{(2)}_2-\delta_{\nn_2}\cdot\hh^{(2)}_1)+\partial_i\nn^{(1)}_1\cdot\delta_{\nn_2}(\ML^{(2)}_3\CF_{Bi})+\partial_i\delta_{\nn_1}\cdot\nn^{(2)}_2(\ML^{(2)}_3\CF_{Bi})\\
&+\partial_i\nn^{(1)}_3\cdot\nn^{(1)}_1(\delta_{\nn_3}\cdot\hh^{(2)}_1-\delta_{\nn_1}\cdot\hh^{(2)}_3)+\partial_i\nn^{(1)}_3\cdot\delta_{\nn_1}(\ML^{(2)}_2\CF_{Bi})+\partial_i\delta_{\nn_3}\cdot\nn^{(2)}_1(\ML^{(2)}_2\CF_{Bi})\\
&+\partial_i\nn^{(1)}_2\cdot\nn^{(1)}_3(\delta_{\nn_2}\cdot\hh^{(2)}_3-\delta_{\nn_3}\cdot\hh^{(2)}_2)+\partial_i\nn^{(1)}_2\cdot\delta_{\nn_3}(\ML^{(2)}_1\CF_{Bi})+\partial_i\delta_{\nn_2}\cdot\nn^{(2)}_3(\ML^{(2)}_1\CF_{Bi}),
\end{align*}
where
\begin{align*}
&\ML^{(2)}_1\CF_{Bi}=\nn^{(2)}_2\cdot\hh^{(2)}_3-\nn^{(2)}_3\cdot\hh^{(2)}_2,~~\ML^{(2)}_2\CF_{Bi}=\nn^{(2)}_3\cdot\hh^{(2)}_1-\nn^{(2)}_1\cdot\hh^{(2)}_3,\\
&\ML^{(2)}_3\CF_{Bi}=\nn^{(2)}_1\cdot\hh^{(2)}_2-\nn^{(2)}_2\cdot\hh^{(2)}_1.
\end{align*}
A direct calculation leads to the following estimates:
\begin{align*}
\|(\delta_{\FF_1},\delta_{\FF_2},\delta_{\FF_3},\delta_{\FF_4})\|_{L^2}\leq& C\Big(\|\delta_{\vv}\|_{L^2}+\sum^3_{i=1}\|\delta_{\nn_i}\|_{L^2}\Big),\\
\|\delta_{\FF_5}\|_{L^2}\leq& C\sum^3_{i=1}\|\delta_{\nn_i}\|_{H^1}.
\end{align*}
Applying the expressions of $\frac{\delta\widetilde{\CF}_{Bi}}{\delta(\delta_{\nn_i})}(i=1,2,3)$ and integrating by parts, it follows that
\begin{align}\label{delta-FF-i}
\Big|\Big\langle\delta_{\FF_i},\frac{\delta\widetilde{\CF}_{Bi}}{\delta(\delta_{\nn_i})}\Big\rangle\Big|\leq C_{\delta}\Big(\|\delta_{\vv}\|^2_{L^2}+\sum^3_{j=1}\|\delta_{\nn_j}\|^2_{H^1}\Big)+\delta\|\nabla\delta_{\vv}\|^2_{L^2},\quad i=1,2,3.
\end{align}
Using the system (\ref{n1-uniquence-equ})--(\ref{n3-uniquence-equ}) and making $L^2$-energy estimates for $(\delta_{\nn_1},\delta_{\nn_2},\delta_{\nn_3})$, we immediately obtain
\begin{align}\label{delta-Fp-L2}
\frac{1}{2}\frac{\ud}{\ud t}\Big(\sum^3_{i=1}\|\delta_{\nn_i}\|^2_{L^2}\Big)\leq C_{\delta}\Big(\|\delta_{\vv}\|^2_{L^2}+\sum^3_{i=1}\|\delta_{\nn_i}\|^2_{L^2}\Big)+\delta\|\nabla\delta_{\vv}\|^2_{L^2}.
\end{align}
Similar to the derivation of energy law in (\ref{energy-law}), we can deduce from the system (\ref{n1-uniquence-equ})--(\ref{incomp-uniquence-equ}) and (\ref{delta-FF-i}) that
\begin{align}\label{delta-Fp-v-H1}
&\frac{\ud}{\ud t}\Big(\frac{1}{2}\|\delta_{\vv}\|^2_{L^2}+\widetilde{\CF}_{Bi}(\Fp^{(1)},\nabla\delta_{\Fp})\Big)+\eta\|\nabla\delta_{\vv}\|^2_{L^2}+\sum^3_{j=1}\frac{1}{\chi_j}\|\delta_{\ML_j}\|^2_{L^2}\nonumber\\
&\quad=-\bigg(\beta_1\|\delta_{\A}\cdot\sss^{(1)}_1\|^2_{L^2}+2\beta_0\int_{\mathbb{R}^d}(\delta_{\A}\cdot\sss^{(1)}_1)(\delta_{\A}\cdot\sss^{(1)}_2)\ud\xx+\beta_2\|\delta_{\A}\cdot\sss^{(1)}_2\|^2_{L^2}\bigg)\nonumber\\
&\qquad-\Big(\beta_3-\frac{\eta^2_3}{\chi_3}\Big)\|\delta_{\A}\cdot\sss^{(1)}_3\|^2_{L^2}
-\Big(\beta_4-\frac{\eta^2_2}{\chi_2}\Big)\|\delta_{\A}\cdot\sss^{(1)}_4\|^2_{L^2}
-\Big(\beta_5-\frac{\eta^2_1}{\chi_1}\Big)\|\delta_{\A}\cdot\sss^{(1)}_5\|^2_{L^2}\nonumber\\
&\qquad+\sum_{i=1}^3\Big\langle\frac{\delta\widetilde{\CF}_{Bi}}{\delta(\delta_{\nn_i})},\delta_{\FF_i}\Big\rangle-\big\langle\delta_{\FF_4},\nabla\delta_{\vv}\big\rangle+\langle\delta_{\FF_5},\delta_{\vv}\rangle-\sum_{i=1}^3\big\langle\vv^{(1)}\cdot\nabla\delta_{\nn_i},\tilde{\delta}_{\hh_i}\big\rangle\nonumber\\
&\qquad+\Big\langle\frac{1}{2}\delta_{\BOm}\cdot\aaa^{(1)}_1+\frac{\eta_3}{\chi_3}\delta_{\A}\cdot\sss^{(1)}_3-\frac{1}{\chi_3}\delta_{\ML_3},\nn^{(1)}_2\cdot\tilde{\delta}_{\hh_1}-\nn^{(1)}_1\cdot\tilde{\delta}_{\hh_2}\Big\rangle\nonumber\\
&\qquad+\Big\langle\frac{1}{2}\delta_{\BOm}\cdot\aaa^{(1)}_2+\frac{\eta_2}{\chi_2}\delta_{\A}\cdot\sss^{(1)}_4-\frac{1}{\chi_2}\delta_{\ML_2},\nn^{(1)}_1\cdot\tilde{\delta}_{\hh_3}-\nn^{(1)}_3\cdot\tilde{\delta}_{\hh_1}\Big\rangle\nonumber\\
&\qquad+\Big\langle\frac{1}{2}\delta_{\BOm}\cdot\aaa^{(1)}_3+\frac{\eta_1}{\chi_1}\delta_{\A}\cdot\sss^{(1)}_5-\frac{1}{\chi_1}\delta_{\ML_1},\nn^{(1)}_3\cdot\tilde{\delta}_{\hh_2}-\nn^{(1)}_2\cdot\tilde{\delta}_{\hh_3}\Big\rangle\nonumber\\
&\quad\leq C_{\delta}\Big(\|\delta_{\vv}\|^2_{L^2}+\sum^3_{j=1}\|\delta_{\nn_j}\|^2_{H^1}\Big)+\delta\Big(\|\nabla\delta_{\vv}\|^2_{L^2}+\sum^3_{j=1}\|\delta_{\ML_j}\|^2_{L^2}\Big).
\end{align}
Hence, combining (\ref{delta-Fp-L2}) with (\ref{delta-Fp-v-H1}) yields that
\begin{align*}
\frac{\ud}{\ud t}\Big(\|\delta_{\vv}\|^2_{L^2}+\sum^3_{i=1}\|\delta_{\nn_i}\|^2_{H^1}\Big)\leq C \Big(\|\delta_{\vv}\|^2_{L^2}+\sum^3_{i=1}\|\delta_{\nn_i}\|^2_{H^1}\Big),
\end{align*}
which implies from Gronwall's inequality that $\delta_{\vv}(t)=0$ and $\delta_{\nn_i}(t)=0(i=1,2,3)$ on $[0,T]$.

\subsection{Blow-up criterion}
Assume that $(\Fp,\vv)$ is the local smooth solution to the biaxial frame system (\ref{new-frame-equation-n1})--(\ref{imcompressible-v}) with the initial data $(\nabla\Fp^{(0)},\vv^{(0)})\in H^{2s}(\mathbb{R}^d)\times H^{2s}(\mathbb{R}^d)(d=2~\text{or}~3)$ and $\nabla\cdot\vv^{(0)}=0$.  In this subsection, we will establish the BKM type blow-up criterion: Let $T^*$ be the maximal existence time of the smooth solution; if $T^*<\infty$, then it is necessary to hold that
\begin{align*}
\int^{T^*}_0\big(\|\nabla\times\vv(t)\|_{L^{\infty}}+\|\nabla\Fp(t)\|^2_{L^{\infty}}\big)\ud t=+\infty,\quad \|\nabla\Fp(t)\|^2_{L^{\infty}}=\sum^3_{i=1}\|\nabla\nn_i(t)\|^2_{L^{\infty}}.
\end{align*}

To obtain the blow-up criterion, the main goal here is derive a much finer a prior energy estimate:
\begin{align}\label{energy-estimate-blowup}
\frac{\ud}{\ud t}\CE_s(\Fp,\vv)\leq C(1+\|\nabla\Fp\|^2_{L^{\infty}}+\|\nabla\vv\|_{L^{\infty}})\CE_s(\Fp,\vv),
\end{align}
where $\CE_s(\Fp,\vv)$ is given by
\begin{align*}
\CE_s(\Fp,\vv)=&\frac{1}{2}\|\Fp-\Fp^{(0)}\|^2_{L^2}+\CF_{Bi}[\Fp]+\frac{1}{2}\|\vv\|^2_{L^2}+\widetilde{\CE}_s(\Fp)+\frac{1}{2}\|\Delta^s\vv\|^2_{L^2}.
\end{align*}
Here, the higher-order derivative terms $\widetilde{\CE}_{s}(\Fp)$ are given by
\begin{align*}
\widetilde{\CE}_{s}(\Fp)=&\frac{1}{2}\sum^3_{i=1}\Big(\gamma_i\|\Delta^s\nabla\nn_i\|^2_{L^2}+k_i\|\Delta^s\rm{div}\nn_i\|^2_{L^2}+\sum^3_{j=1}k_{ji}\|\Delta^s(\nabla\times\nn_i)\cdot\nn_j\|^2_{L^2}\Big).
\end{align*}
If the energy inequality (\ref{energy-estimate-blowup}) holds, then from the logarithmic Sobolev inequality (see \cite{BKM} for details)
\begin{align*}
\|\nabla\vv\|_{L^{\infty}}\leq C\big(1+\|\nabla\vv\|_{L^2}+\|\nabla\times\vv\|_{L^{\infty}}\ln(3+\|\vv\|_{H^k})\big)
\end{align*}
for any $k\geq3$, we can infer that
\begin{align}\label{energy-blow-up-infty}
\frac{\ud}{\ud t}\CE_s(\Fp,\vv)\leq C\big(1+\|\nabla\vv\|_{L^{2}}+\|\nabla\times\vv\|_{L^{\infty}}+\|\nabla\Fp\|^2_{L^{\infty}}\big)\ln(3+\CE_s(\Fp,\vv))\CE_s(\Fp,\vv).
\end{align}
Then (\ref{energy-blow-up-infty}) and Gronwall's inequality implies
\begin{align*}
\CE_s(\Fp,\vv)\leq\big(3+\CE_s(\Fp^{(0)},\vv^{(0)})\big)\exp\Big(Ce^{\int_{0}^{t}(1+\|\nabla\vv\|_{L^{2}}+\|\nabla\times\vv\|_{L^{\infty}}+\|\nabla\Fp\|^2_{L^{\infty}})\ud\tau}\Big),
\end{align*}
for any $t\in[0,T^*)$. Furthermore, if $T^{*}<+\infty$ and
\begin{align*}
    \int^{T^*}_0\big(\|\nabla\times\vv\|_{L^{\infty}}+\|\nabla\Fp\|^2_{L^{\infty}}\big)\ud t<+\infty,
\end{align*}
then $\CE_s(\Fp,\vv)(t)\leq C$ for any $t\in[0,T^*)$.
Therefore, the smooth solution $(\Fp,\vv)$ can be extended after $t=T^*$, which contradicts the definition of $T^*$. Consequently, the BKM blow-up criterion holds.

Next, it suffices to prove the a prior energy inequality (\ref{energy-estimate-blowup}). We will omit some non-essential details, since the proof is fairly analogous to the arguments of the previous Subsection \ref{local-well-subsect}. Before we continue, let us give a useful Gagliardo--Sobolev inequality (for example, see \cite{Adams,WW}).
\begin{lemma}[Gagliardo--Sobolev inequality]\label{Gagliardo-Sobolev inequality}
Assume that $\alpha\in\mathbb{N}$ and $\alpha\geq 2s-1$, then for $1\leq j\leq[\frac{\alpha}{2}], [\frac{\alpha}{2}]+1\leq k\leq\alpha$ and $f\in H^{\alpha+1}(\mathbb{R}^d)$, it holds that
\begin{align*}
\|\nabla^jf\|_{L^{\infty}}\leq& C\|\nabla f\|^{\frac{j}{\alpha+1-d/2}}_{H^{\alpha}}\|f\|^{1-\frac{j}{\alpha+1-d/2}}_{L^{\infty}},\\
\|\nabla^kf\|_{L^2}\leq&C\|\nabla f\|^{\frac{k-d/2}{\alpha+1-d/2}}_{H^{\alpha}}\|f\|^{1-\frac{k-d/2}{\alpha+1-d/2}}_{L^{\infty}}.
\end{align*}
\end{lemma}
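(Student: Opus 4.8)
The plan is to recognize both estimates as instances of the Gagliardo--Nirenberg interpolation inequality on $\mathbb{R}^d$ and to reduce the lemma to that classical fact, the only real work being the verification that the allowed ranges of the indices fall within its admissible region. The first step is to replace $\|\nabla f\|_{H^{\alpha}}$ on the right-hand sides by the single top-order homogeneous norm $\|\nabla^{\alpha+1}f\|_{L^2}$: since $\|\nabla^{\alpha+1}f\|_{L^2}\le\|\nabla f\|_{H^{\alpha}}$ directly from the definition of the Sobolev norm, it suffices to prove both bounds with $\|\nabla^{\alpha+1}f\|_{L^2}$ in place of $\|\nabla f\|_{H^{\alpha}}$. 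The correct interpolation weights are then forced by scaling: replacing $f(\xx)$ by $f(\lambda\xx)$ and equating the powers of $\lambda$ on the two sides yields the weight of $\|\nabla^{\alpha+1}f\|_{L^2}$ as $\theta_{1}=\frac{j}{\alpha+1-d/2}$ for the first inequality and $\theta_{2}=\frac{k-d/2}{\alpha+1-d/2}$ for the second, which are precisely the exponents in the statement.

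For the proof itself I would invoke the Gagliardo--Nirenberg inequality $\|\nabla^{\ell}g\|_{L^{p}}\le C\|\nabla^{m}g\|_{L^{2}}^{\theta}\|g\|_{L^{\infty}}^{1-\theta}$, valid for admissible $\theta$ satisfying $\frac1p=\frac{\ell}{d}+\theta\bigl(\frac12-\frac md\bigr)$, applied with $(\ell,m,p)=(j,\alpha+1,\infty)$ for the first bound and $(\ell,m,p)=(k,\alpha+1,2)$ for the second. An equivalent self-contained route is a Littlewood--Paley/Bernstein computation: write $f=S_{0}f+\sum_{q\ge0}\Delta_{q}f$, control the low-frequency part directly, bound each dyadic block $\|\nabla^{\ell}\Delta_{q}f\|_{L^{p}}$ by the minimum of its two natural estimates (one in terms of $\|\nabla^{\alpha+1}f\|_{L^{2}}$ through $2^{q(\alpha+1)}\|\Delta_{q}f\|_{L^{2}}=c_{q}\|\nabla^{\alpha+1}f\|_{L^{2}}$ with $\{c_{q}\}\in\ell^{2}$, the other in terms of $\|f\|_{L^{\infty}}$), and sum the two resulting geometric-type series after cutting the frequency range at the index where the two estimates balance; optimizing in that cutoff reproduces exactly the powers $\theta_{1}$ and $\theta_{2}$. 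The hypotheses $\alpha\ge 2s-1\ge3$ together with $1\le j\le[\frac{\alpha}{2}]$ (respectively $[\frac{\alpha}{2}]+1\le k\le\alpha$) are exactly what guarantees that $\alpha+1-d/2>0$ for $d\in\{2,3\}$ and that $\theta_{1}$ (respectively $\theta_{2}$) lies in the admissible interval, in particular is strictly below $1$, so that the borderline exclusion in the Gagliardo--Nirenberg theorem is never triggered; a density argument then extends the estimates from Schwartz functions to arbitrary $f\in H^{\alpha+1}(\mathbb{R}^{d})$.

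The difficulty here is bookkeeping rather than analysis. One must check carefully, separately for $d=2$ and $d=3$ and uniformly over the stated ranges of $j$ and $k$, that the scaling-determined weight is a genuine interpolation parameter (and strictly less than $1$), and, in the dyadic approach, one must keep the two competing block bounds and the two summation thresholds aligned so that the final exponent on $\|\nabla^{\alpha+1}f\|_{L^2}$ comes out exactly as $\frac{j}{\alpha+1-d/2}$ and $\frac{k-d/2}{\alpha+1-d/2}$ rather than merely as something comparable. Once these index verifications are carried out, the two inequalities follow immediately.
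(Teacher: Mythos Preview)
The paper does not actually prove this lemma: it states the inequalities as a known Gagliardo--Sobolev (interpolation) result and refers the reader to the references \cite{Adams,WW} in lieu of a proof. Your proposal, by contrast, supplies a genuine argument---recognizing both estimates as instances of the Gagliardo--Nirenberg inequality with the scaling-determined exponents $\theta_{1}=\frac{j}{\alpha+1-d/2}$ and $\theta_{2}=\frac{k-d/2}{\alpha+1-d/2}$, reducing $\|\nabla f\|_{H^{\alpha}}$ to $\|\nabla^{\alpha+1}f\|_{L^{2}}$, and checking that the index ranges keep $\theta_{1},\theta_{2}\in(0,1)$ so the borderline case is avoided. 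This is exactly the right reduction, and your alternative Littlewood--Paley sketch would also work; either route yields the stated inequalities. In short, your approach is correct and strictly more detailed than what the paper does, since the paper simply cites the result.
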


Now let us turn to the proof of (\ref{energy-blow-up-infty}). Note that the solution to the system (\ref{new-frame-equation-n1})--(\ref{imcompressible-v}) satisfies the orthonormal frame $\Fp=(\nn_1,\nn_2,\nn_3)\in SO(3)$, that is, $|\nn_i|=1$ and $\nn_i$ being mutually orthogonal for $i=1,2,3$.   Using the equations (\ref{new-frame-equation-n1})--(\ref{new-frame-equation-n3}) and Lemma \ref{h-decomposition}, and integrating by parts leads to
\begin{align*}
&\frac{1}{2}\frac{\ud}{\ud t}\|\Fp-\Fp^{(0)}\|^2_{L^2}=\sum^3_{i=1}\big\langle\partial_t\nn_i,\nn_i-\nn^{(0)}_i\big\rangle\\
&\quad\leq C\Big(\sum^3_{i=1}(\|\nabla\nn^{(0)}_i\|_{L^2}\|\vv\|_{L^2}+\|\nabla\vv\|_{L^2})\|\nn_i-\nn^{(0)}_i\|_{L^2}\\
&\qquad+\sum^3_{i=1}\big(\|\nabla\nn_i\|_{L^{\infty}}\|\nabla\nn_i\|_{L^2}\|\nn_i-\nn^{(0)}_i\|_{L^2}+\|\nabla\nn^{(0)}_i\|_{L^2}\|\nabla\nn_i\|^2_{L^2}\big)\Big)\\
&\quad\leq C_{\delta}(1+\|\nabla\Fp\|^2_{L^{\infty}})\CE_s(\Fp,\vv)+\delta\|\nabla\vv\|^2_{L^2},
\end{align*}
which together with the energy dissipation law (\ref{energy-law}) implies
\begin{align}\label{low-energy-Fp-v-L2}
\frac{\ud}{\ud t}\Big(\frac{1}{2}\|\Fp-\Fp^{(0)}\|^2_{L^2}+\CF_{Bi}[\Fp]+\frac{1}{2}\|\vv\|^2_{L^2}\Big)\leq C(1+\|\nabla\Fp\|^2_{L^{\infty}})\CE_s(\Fp,\vv).
\end{align}

We now consider the estimates of the higher order derivatives of the frame $\Fp\in SO(3)$. First, similar to (\ref{Deltas-CJ-hh-ve-comp}), using the definitions of $\hh_i(i=1,2,3)$ we also write
\begin{align*}
\Delta^s\hh_i\eqdefa&\CL_i(\Fp)+\CG_{i}(\Fp),
\end{align*}
where $\CL_i(\Fp)$ and $\CG_{i}(\Fp)$ are respectively higher-order and lower-order derivative terms,
\begin{align*}
\CL_i(\Fp)=&\gamma_i\Delta^{s+1}\nn_i+k_i\Delta^s\nabla{\rm div}\nn_i-\sum^3_{j=1}k_{ji}\Delta^s\nabla\times(\nabla\times\nn_i\cdot\nn^2_j),\\
\CG_i(\Fp)=&-\sum^3_{j=1}k_{ij}\Delta^s[(\nn_i\cdot\nabla\times\nn_j)(\nabla\times\nn_j)].
\end{align*}
According to the equations (\ref{new-frame-equation-n1})--(\ref{new-frame-equation-n3}), we define
\begin{align*}
\Delta^s\dot{\nn}_1=&\Big(\frac{1}{2}\Delta^s\BOm\cdot\aaa_1+\frac{\eta_3}{\chi_3}\Delta^s\A\cdot\sss_3-\frac{1}{\chi_3}\CH^{\Delta^s}_3\Big)\nn_2\nonumber\\
&-\Big(\frac{1}{2}\Delta^s\BOm\cdot\aaa_2+\frac{\eta_2}{\chi_2}\Delta^s\A\cdot\sss_4-\frac{1}{\chi_2}\CH^{\Delta^s}_2\Big)\nn_3+\CR_1(\Fp),\\
\Delta^s\dot{\nn}_2=&-\Big(\frac{1}{2}\Delta^s\BOm\cdot\aaa_1+\frac{\eta_3}{\chi_3}\Delta^s\A\cdot\sss_3-\frac{1}{\chi_3}\CH^{\Delta^s}_3\Big)\nn_1\nonumber\\
&+\Big(\frac{1}{2}\Delta^s\BOm\cdot\aaa_3+\frac{\eta_1}{\chi_1}\Delta^s\A\cdot\sss_5-\frac{1}{\chi_1}\CH^{\Delta^s}_1\Big)\nn_3+\CR_2(\Fp),\\
\Delta^s\dot{\nn}_3=&\Big(\frac{1}{2}\Delta^s\BOm\cdot\aaa_2+\frac{\eta_2}{\chi_2}\Delta^s\A\cdot\sss_4-\frac{1}{\chi_2}\CH^{\Delta^s}_2\Big)\nn_1\nonumber\\
&-\Big(\frac{1}{2}\Delta^s\BOm\cdot\aaa_3+\frac{\eta_1}{\chi_1}\Delta^s\A\cdot\sss_5-\frac{1}{\chi_1}\CH^{\Delta^s}_1\Big)\nn_2+\CR_3(\Fp),
\end{align*}
where $\CR_i(\Fp)(i=1,2,3)$ are derivative terms that can be easily handled by commutator estimates, for example,
\begin{align*}
\CR_1(\Fp)=&\frac{1}{2}[\Delta^s,\nn_2\otimes\aaa_1\cdot]\BOm+\frac{\eta_3}{\chi_3}[\Delta^s,\nn_2\otimes\sss_3\cdot]\A-\frac{1}{\chi_3}\big([\Delta^s,\nn_2\otimes\nn_1\cdot]\hh_2-[\Delta^s,\nn^2_2\cdot]\hh_1\big)\\
&-\frac{1}{2}[\Delta^s,\nn_3\otimes\aaa_2\cdot]\BOm-\frac{\eta_2}{\chi_2}[\Delta^s,\nn_3\otimes\sss_4\cdot]\A+\frac{1}{\chi_2}\big([\Delta^s,\nn^2_3\cdot]\hh_1-[\Delta^s,\nn_3\otimes\nn_1\cdot]\hh_3\big),
\end{align*}
and $\CH^{\Delta^s}_i(i=1,2,3)$ are denoted by
\begin{align*}
\CH^{\Delta^s}_{1}=&\nn_2\cdot\Delta^s\hh_3-\nn_3\cdot\Delta^s\hh_2,\\
\CH^{\Delta^s}_{2}=&\nn_3\cdot\Delta^s\hh_1-\nn_1\cdot\Delta^s\hh_3,\\
\CH^{\Delta^s}_{3}=&\nn_1\cdot\Delta^s\hh_2-\nn_2\cdot\Delta^s\hh_1.
\end{align*}

By a direct calculation, we obtain
\begin{align}
\frac{1}{2}\frac{\ud}{\ud t}\|\Delta^s\nabla\nn_i\|^2_{L^2}=&-\langle\Delta^s\dot{\nn}_i,\Delta^{s+1}\nn_i\rangle-\langle\nabla\Delta^s(\vv\cdot\nabla\nn_i),\Delta^s\nabla\nn_i\rangle,\label{Deltas-nablani-L2}\\
\frac{1}{2}\frac{\ud}{\ud t}\|\Delta^s{\rm div}\nn_i\|^2_{L^2}=&-\langle\Delta^s\dot{\nn}_i,\Delta^s\nabla{\rm div}\nn_i\rangle-\langle\Delta^s{\rm div}(\vv\cdot\nabla\nn_i),\Delta^s{\rm div}\nn_i\rangle.\label{Deltas-divni-L2}
\end{align}
By the relation
$
\nabla\times(fu)=f\nabla\times u+\nabla f\times u
$
for any scalar field $f$ and any vector field $u$, together with $\nabla\cdot\vv=0$, we can derive that
\begin{align}\label{energy-dot-nn-4term}
&\frac{1}{2}\frac{\ud}{\ud t}\|\nn_j\cdot\Delta^s(\nabla\times\nn_i)\|^2_{L^2}\nonumber\\
&\quad=\big\langle\dot{\nn}_j\cdot\Delta^s(\nabla\times\nn_i)+\nn_j\cdot\Delta^s(\nabla\times\dot{\nn}_i),\nn_j\cdot\Delta^s(\nabla\times\nn_i)\big\rangle\nonumber\\
&\qquad-\big\langle(\vv\cdot\nabla\nn_j)\cdot\Delta^s(\nabla\times\nn_i)+\nn_j\cdot\Delta^s\big(\nabla\times(\vv\cdot\nabla\nn_i)\big),\nn_j\cdot\Delta^s(\nabla\times\nn_i)\big\rangle\nonumber\\
&\quad=\big\langle\Delta^s\dot{\nn}_i,\Delta^s\nabla\times(\nabla\times\nn_i\cdot\nn^2_j)\big\rangle+\big\langle\dot{\nn}_j\cdot\Delta^s(\nabla\times\nn_i),\nn_j\cdot\Delta^s(\nabla\times\nn_i)\big\rangle\nonumber\\
&\qquad+\big\langle\Delta^s\dot{\nn}_i,\nabla\times\big(\Delta^s(\nabla\times\nn_i)\cdot\nn^2_j\big)-\Delta^s\nabla\times(\nabla\times\nn_i\cdot\nn^2_j)\big\rangle\nonumber\\
&\qquad-\big\langle[\Delta^s\nabla\times,\vv\cdot]\nabla\nn_i,\Delta^s(\nabla\times\nn_i)\cdot\nn^2_j\big\rangle.
\end{align}
Then, the equations (\ref{new-frame-equation-n1})--(\ref{new-frame-equation-n3}) and (\ref{Deltas-nablani-L2})--(\ref{energy-dot-nn-4term}) yield
\begin{align}\label{energy-es-fp-tilde}
\frac{\ud}{\ud t}\widetilde{\CE}_s(\Fp)
\eqdefa&\CK_1+\CK_2+\CK_3+\CK_4+\CK_5,
\end{align}
where the terms $\CK_i(i=1,\cdots,5)$ are expressed as, respectively,
\begin{align*}
\CK_1=&-\sum^3_{i=1}\Big(\gamma_i\langle\nabla\Delta^s(\vv\cdot\nabla\nn_i),\Delta^s\nabla\nn_i\rangle+k_i\langle\Delta^s{\rm div}(\vv\cdot\nabla\nn_i),\Delta^s{\rm div}\nn_i\rangle\\
&+\sum^3_{j=1}k_{ji}\langle[\Delta^s\nabla\times,\vv\cdot]\nabla\nn_i,\Delta^s(\nabla\times\nn_i)\cdot\nn^2_j\rangle\Big),\\
\CK_2=&\int_{\mathbb{R}^d}\Big(\frac{1}{2}\Delta^s\BOm\cdot\aaa_1+\frac{\eta_3}{\chi_3}\Delta^s\A\cdot\sss_3-\frac{1}{\chi_3}\CH^{\Delta^s}_3\Big)\CH^{\Delta^s}_3\ud\xx\\
&+\int_{\mathbb{R}^d}\Big(\frac{1}{2}\Delta^s\BOm\cdot\aaa_2+\frac{\eta_2}{\chi_2}\Delta^s\A\cdot\sss_4-\frac{1}{\chi_2}\CH^{\Delta^s}_2\Big)\CH^{\Delta^s}_2\ud\xx\\
&+\int_{\mathbb{R}^d}\Big(\frac{1}{2}\Delta^s\BOm\cdot\aaa_3+\frac{\eta_1}{\chi_1}\Delta^s\A\cdot\sss_5-\frac{1}{\chi_1}\CH^{\Delta^s}_1\Big)\CH^{\Delta^s}_1\ud\xx,\\
\CK_3=&-\Big\langle\frac{1}{2}\Delta^s\BOm\cdot\aaa_1+\frac{\eta_3}{\chi_3}\Delta^s\A\cdot\sss_3-\frac{1}{\chi_3}\CH^{\Delta^s}_3,\nn_1\cdot\CG_2(\Fp)-\nn_2\cdot\CG_1(\Fp)\Big\rangle\\
&-\Big\langle\frac{1}{2}\Delta^s\BOm\cdot\aaa_2+\frac{\eta_2}{\chi_2}\Delta^s\A\cdot\sss_4-\frac{1}{\chi_2}\CH^{\Delta^s}_2,\nn_3\cdot\CG_1(\Fp)-\nn_1\cdot\CG_3(\Fp)\Big\rangle\\
&-\Big\langle\frac{1}{2}\Delta^s\BOm\cdot\aaa_3+\frac{\eta_1}{\chi_1}\Delta^s\A\cdot\sss_5-\frac{1}{\chi_1}\CH^{\Delta^s}_1,\nn_2\cdot\CG_3(\Fp)-\nn_3\cdot\CG_2(\Fp)\Big\rangle,\\
\CK_4=&-\sum^3_{i=1}\big\langle\CR_i(\Fp),\CL_i(\Fp)\big\rangle,\\
\CK_5=&\sum^3_{i,j=1}k_{ji}\Big(\big\langle\dot{\nn}_j\cdot\Delta^s(\nabla\times\nn_i),\nn_j\cdot\Delta^s(\nabla\times\nn_i)\big\rangle\\
&+\big\langle\Delta^s\dot{\nn}_i,\nabla\times\big(\Delta^s(\nabla\times\nn_i)\cdot\nn^2_j\big)-\Delta^s\nabla\times(\nabla\times\nn_i\cdot\nn^2_j)\big\rangle\Big).
\end{align*}

Applying Lemma \ref{product-estimate-lemma}, we easily obtain
\begin{align*}
\CK_1\leq &C\sum^3_{i=1}(\|\nabla\vv\|_{L^{\infty}}\|\nabla\nn_i\|_{H^{2s}}+\|\nabla\nn_i\|_{L^{\infty}}\|\nabla\vv\|_{H^{2s}})\|\nabla\nn_i\|_{H^{2s}}\nonumber\\
\leq&C_{\delta}(\|\nabla\vv\|_{L^{\infty}}+\|\nabla\Fp\|^2_{L^{\infty}})\|\nabla\Fp\|^2_{H^{2s}}+\delta\|\nabla\vv\|^2_{H^{2s}},
\end{align*}
where $\delta$ denotes a small positive constant to be determined later.

Using Lemma \ref{Gagliardo-Sobolev inequality}, for $\alpha\geq2s-1$ with $s\geq2$ and $i=1,2,3$, it follows that
\begin{align}\label{Gagliardo-Sobolev-ineq12}
 \left\{
\begin{array}{l}
\|\nabla^{\alpha+1}\nn_i\|_{L^2}\|\nabla\nn_i\|_{L^{\infty}}+\|\nabla^{\alpha}\nn_i\|_{L^2}\|\nabla^2\nn_i\|_{L^{\infty}}\vspace{1ex}\\
\qquad+\|\nabla^{\alpha}\nn_i\|_{L^2}\|\nabla\nn_i\|^2_{L^{\infty}}
\leq C\|\nabla\nn_i\|_{H^{\alpha+1}},\vspace{1ex}\\
(\|\nabla^2\nn_i\|_{L^{\infty}}+\|\nabla\nn_i\|^2_{L^{\infty}})\|\nabla^{\alpha}\nn_i\|_{L^2}\leq C\|\nabla\nn_i\|_{L^{\infty}}\|\nabla^{\alpha+1}\nn_i\|_{L^2}.
    \end{array}
    \right.
\end{align}
It can seen from Lemma \ref{product-estimate-lemma} and (\ref{Gagliardo-Sobolev-ineq12}) that $\CK_3$ can be estimated as
\begin{align*}
\CK_3\leq&C\Big(\|\Delta^s\nabla\vv\|_{L^2}+\sum^3_{i=1}\|\CH^{\Delta^s}_i\|_{L^2}\Big)\Big(\sum^3_{i=1}\|\CG_i(\Fp)\|_{L^2}\Big)\\
\leq&C\Big(\|\nabla\vv\|_{H^{2s}}+\sum^3_{i=1}\|\CL_i(\Fp)+\CG_i(\Fp)\|_{L^2}\Big)\Big(\sum^3_{i=1}\|\CG_i(\Fp)\|_{L^2}\Big)\\
\leq&\delta(\|\Delta^{s+1}\Fp\|^2_{L^2}+\|\nabla\vv\|^2_{H^{2s}})+C_{\delta}(1+\|\nabla\Fp\|_{L^{\infty}}^2)\|\nabla\Fp\|^2_{H^{2s}},
\end{align*}
where we have used the estimates of $\CL_i(\Fp)$ and $\CG_i(\Fp)(i=1,2,3)$, for example,
\begin{align*}
&\|\Delta^s\nabla\times(\nabla\times\nn_i\cdot\nn_j^2)\|_{L^2}\\
&\quad\leq \|[\Delta^s\nabla\times,\nn_j^2\cdot]\nabla\times\nn_i\|_{L^2}+\|\nn_j^2\cdot\Delta^s\nabla\times(\nabla\times\nn_i)\|_{L^2}\\
&\quad\leq C\big(\|\Delta^s\nabla(\nn_j^2)\|_{L^2}\|\nabla\nn_i\|_{L^{\infty}}+\|\Delta^s\nabla\nn_i\|_{L^2}\|\nabla\nn_j\|_{L^{\infty}}+\|\Delta^{s+1}\nn_i\|_{L^2}\big)\\
%&\leq\big( (\|[\Delta^s\nabla,\nn_2\otimes]\nn_2]\|_{L^2}+\|\nn_2\otimes\Delta^s\nabla\nn_2\|_{L^2})\|\nabla\nn_1\|_{L^{\infty}}+\|\Delta^s\nabla\nn_1\|_{L^2}\|\nabla\nn_2\|_{L^{\infty}}+\|\Delta^{s+1}\nn_1\|_{L^2})\big)\\
&\quad\leq C\big(\|\nabla\Fp\|_{L^{\infty}}\|\nabla\Fp\|_{H^{2s}}+\|\Delta^{s+1}\Fp\|_{L^2}\big),\\
&\|\Delta^s[(\nabla\times\nn_i\cdot\nn_i)(\nabla\times\nn_i)]\|_{L^2}\leq C\|\nabla\Fp\|_{L^{\infty}}\|\nabla\Fp\|_{H^{2s}}.
\end{align*}

For the term $\CK_4$, we only show how to estimate the first term $-\langle\CR_1(\Fp),\CL_1(\Fp)\rangle$, since the estimates of the remaining terms are similar. By virtue of Lemma \ref{product-estimate-lemma} and (\ref{Gagliardo-Sobolev-ineq12}), the definitions of $\hh_i(i=1,2,3)$ and integrating by parts, there holds
\begin{align*}
\big|\langle\CR_1(\Fp),\CL_1(\Fp)\rangle\big|\leq& C\Big(\|\nabla[\Delta^s,\nn_2\otimes\aaa_1\cdot]\BOm\|_{L^2}+\|\nabla[\Delta^s,\nn_3\otimes\aaa_2\cdot]\BOm\|_{L^2}\\
&+\|\nabla[\Delta^s,\nn_2\otimes\sss_3\cdot]\A\|_{L^2}+\|\nabla[\Delta^s,\nn_3\otimes\sss_4\cdot]\A\|_{L^2}\\
&+\|\nabla[\Delta^s,\nn_2\otimes\nn_1\cdot]\hh_2\|_{L^2}+\|\nabla[\Delta^s,\nn_3\otimes\nn_1\cdot]\hh_3\|_{L^2}\\
&+\|\nabla[\Delta^s,\nn^2_2\cdot]\hh_1\|_{L^2}+\|\nabla[\Delta^s,\nn^2_3\cdot]\hh_1\|_{L^2}\Big)\|\nabla\Fp\|_{H^{2s}}\\
\leq&C\Big(\|\nabla\vv\|_{L^{\infty}}\|\nabla\Fp\|_{H^{2s}}+(\|\nabla^2\Fp\|_{L^{\infty}}+\|\nabla\Fp\|^2_{L^{\infty}})\|\vv\|_{H^{2s}}\\
&+\|\nabla\Fp\|_{L^{\infty}}\|\nabla\vv\|_{H^{2s}}+(\|\nabla^2\Fp\|_{L^{\infty}}+\|\nabla\Fp\|^2_{L^{\infty}})\|\nabla\Fp\|_{H^{2s}}\\
&+\|\nabla\Fp\|_{L^{\infty}}\|\Delta^{s+1}\Fp\|_{L^2}\Big)\|\nabla\Fp\|_{H^{2s}}\\
\leq& C_{\delta}(1+\|\nabla\vv\|_{L^{\infty}}+\|\nabla\Fp\|^2_{L^{\infty}})(\|\vv\|_{H^{2s}}+\|\nabla\Fp\|^2_{H^{2s}})\\
&+\delta(\|\nabla\vv\|_{H^{2s}}+\|\Delta^{s+1}\Fp\|^2_{L^2}).
\end{align*}

For the term $\CK_5$, employing the equations (\ref{new-frame-equation-n1})--(\ref{new-frame-equation-n3}), Lemma \ref{product-estimate-lemma} and the inequality (\ref{Gagliardo-Sobolev-ineq12}), we derive that
\begin{align*}
\CK_5\leq&C\sum^3_{i=1}\|\dot{\nn}_i\|_{L^{\infty}}\|\nabla\Fp\|_{H^{2s}}
+C\Big(\sum^3_{i=1}\|\Delta^s\dot{\nn}_i\|_{L^2}\Big)\\
&\times\Big(\sum^3_{\alpha,\beta=1}\big\|\nabla\times\big(\Delta^s(\nabla\times\nn_{\alpha})\cdot\nn^2_{\beta}\big)-\Delta^s\nabla\times(\nabla\times\nn_{\alpha}\cdot\nn^2_{\beta})\big\|_{L^2}\Big)\\
\leq&C\Big(\|\Delta^s\nabla\vv\|_{L^2}+\|\Delta^s\Fp\|_{L^2}\|\nabla\vv\|_{L^{\infty}}\\
&+(\|\nabla\Fp\|^2_{L^{\infty}}+\|\nabla^2\Fp\|_{L^{\infty}})\|\Delta^s\Fp\|_{L^2}+\sum^3_{i=1}\frac{1}{\chi_i}\|\CH^{\Delta^s}_i\|_{L^2}\Big)\\
&\quad\times\Big((1+\|\nabla\Fp\|_{L^{\infty}})\|\Delta^s\nabla\Fp\|_{L^2}+\|\Delta^s\Fp\|_{L^2}\|\nabla^2\Fp\|_{L^{\infty}}\Big)\\
&+C(\|\nabla\vv\|_{L^{\infty}}+\|\nabla\Fp\|^2_{L^{\infty}}+\|\nabla^2\Fp\|_{L^{\infty}})\|\nabla\Fp\|_{H^{2s}}\\
\leq&\delta(\|\nabla\vv\|^2_{H^{2s}}+\|\Delta^{s+1}\Fp\|_{L^2})+C_{\delta}(1+\|\nabla\vv\|_{L^{\infty}}+\|\nabla\Fp\|^2_{L^{\infty}})\|\nabla\Fp\|^2_{H^{2s}},
\end{align*}
where for $\alpha,\beta=1,2,3$, we have used the following fact:
\begin{align*}
&\big\|\nabla\times\big(\Delta^s(\nabla\times\nn_{\alpha})\cdot\nn^2_{\beta}\big)-\Delta^s\nabla\times(\nabla\times\nn_{\alpha}\cdot\nn^2_{\beta})\big\|_{L^2}\\
&\quad\leq\sum^{2s-1}_{k=1}\|\nabla\times[\partial^{2s-k-1}(\nabla\times\nn_{\alpha})\cdot\partial^k(\nn^2_{\beta})]\|_{L^2}+\|\nabla\times(\nabla\times\nn_{\alpha}\cdot\partial^{2s}(\nn^2_{\beta}))\|_{L^2}\\
&\quad\leq C\Big((1+\|\nabla\Fp\|_{L^{\infty}})\|\Delta^s\nabla\Fp\|_{L^2}+\|\Delta^s\Fp\|_{L^2}\|\nabla^2\Fp\|_{L^{\infty}}\Big).
\end{align*}

In order to control the higher order derivative term $\|\Delta^{s+1}\Fp\|_{L^2}$, we need to provide a higher order dissipated estimate for $\sum\limits^3_{i=1}\frac{1}{\chi_i}\|\CH^{\Delta^s}_i\|^2_{L^2}$, that is,
\begin{align*}
\sum^3_{i=1}\frac{1}{\chi_i}\|\CH^{\Delta^s}_i\|^2_{L^2}\geq\frac{2\gamma^2}{\chi}\|\Delta^{s+1}\Fp\|^2_{L^2}+L.O.T.,
\end{align*}
where $\gamma=\min\{\gamma_1,\gamma_2,\gamma_3\}$, $\chi=\max\{\chi_1,\chi_2,\chi_3\}$, and L.O.T represents the lower order derivative terms.

Note that for any matrix $A\in\mathbb{R}^{3\times3}$, we could write
\begin{align}\label{A-DFp-orthogonal-decomp-Blowup}
A\cdot\Delta^{s+1}\Fp=\sum^3_{k=1}\frac{1}{|V_k|^2}(A\cdot V_k)(\Delta^{s+1}\Fp\cdot V_k)+\sum^6_{k=1}\frac{1}{|W_k|^2}(A\cdot W_k)(\Delta^{s+1}\Fp\cdot W_k)
\end{align}
by using \eqref{AB-orthogonal-decomposition}, where we recall the definition of $V_k(k=1,2,3)$ and $W_k(k=1,\cdots,6)$ before \eqref{ML-123}.

Taking $A=\Delta^{s+1}\Fp$ in (\ref{A-DFp-orthogonal-decomp-Blowup}), and then integrating over $\mathbb{R}^d$, we deduce that
\begin{align*}
\|\Delta^{s+1}\Fp\|^2_{L^2}\leq\frac{1}{2}\sum^3_{k=1}\|\Delta^{s+1}\Fp\cdot V_k\|^2_{L^2}+C_{\delta}\|\nabla\Fp\|^2_{L^{\infty}}\|\nabla\Fp\|^2_{H^{2s}}+\delta\|\Delta^{s+1}\Fp\|^2_{L^2}.
\end{align*}
Furthermore, taking
\begin{align*}
A=\Delta^s\nabla\cdot\frac{\partial f_{Bi}}{\partial(\nabla\Fp)}-\gamma
\Delta^{s+1}\Fp
\end{align*}
into (\ref{A-DFp-orthogonal-decomp-Blowup}) and then integrating over $\mathbb{R}^d$, we obtain
\begin{align*}
&\int_{\mathbb{R}^d}\Big(\Delta^s\nabla\cdot\frac{\partial f_{Bi}}{\partial(\nabla\Fp)}-\gamma
\Delta^{s+1}\Fp\Big)\cdot\Delta^{s+1}\Fp\ud\xx\\
&\quad\leq\frac{1}{2}\int_{\mathbb{R}^d}\sum^3_{k=1}\Big[\Big(\Delta^s\nabla\cdot\frac{\partial f_{Bi}}{\partial(\nabla\Fp)}-\gamma
\Delta^{s+1}\Fp\Big)\cdot V_k\Big](\Delta^{s+1}\Fp\cdot V_k)\ud\xx\\
&\qquad+C_{\delta}\|\nabla\Fp\|^2_{L^{\infty}}\|\nabla\Fp\|^2_{H^{2s}}+\delta\|\Delta^{s+1}\Fp\|^2_{L^2}.
\end{align*}
Using Lemma \ref{h-decomposition} and integrating by parts, one can give
\begin{align*}
&\int_{\mathbb{R}^d}\Big(\Delta^s\nabla\cdot\frac{\partial f_{Bi}}{\partial(\nabla\Fp)}-\gamma
\Delta^{s+1}\Fp\Big)\cdot\Delta^{s+1}\Fp\ud\xx\\
&\quad\geq\sum^3_{i=1}\int_{\mathbb{R}^d}\Big(k_i|\Delta^s\nabla{\rm div}\nn_i|^2+\sum^3_{j=1}k_{ji}|\Delta^s\nabla\times(\nabla\times\nn_i\cdot\nn_j)|^2\Big)\ud\xx\\
&\qquad-C_{\delta}(1+\|\nabla\Fp\|^2_{L^{\infty}})\|\nabla\Fp\|^2_{H^{2s}}-\delta\|\Delta^{s+1}\Fp\|^2_{L^2}.
\end{align*}
Thus, by virtue of the definitions of $\CH^{\Delta^s}_k$ and $\ML_k(k=1,2,3)$, we have
\begin{align*}
&\sum^3_{k=1}\frac{1}{\chi_k}\|\CH^{\Delta^s}_k\|^2_{L^2}\geq\sum^3_{k=1}\frac{1}{\chi_k}\|\Delta^s(\ML_k\CF_{Bi})\|^2_{L^2}-C\|\nabla\Fp\|^2_{L^{\infty}}\|\nabla\Fp\|^2_{H^{2s}}\nonumber\\
&\quad=\sum^3_{k=1}\frac{1}{\chi_k}\int_{\mathbb{R}^d}\Big[\Delta^s\Big(V_k\cdot\frac{\delta\CF_{Bi}}{\delta\Fp}\Big)\Big]^2\ud\xx-C\|\nabla\Fp\|^2_{L^{\infty}}\|\nabla\Fp\|^2_{H^{2s}}\nonumber\\
&\quad\geq\frac{1}{\chi}\int_{\mathbb{R}^d}\sum^3_{k=1}\Big[V_k\cdot\Big(\Delta^s\nabla\cdot\frac{\partial f_{Bi}}{\partial(\nabla\Fp)}\Big)\Big]^2\ud\xx-C_{\delta}\|\nabla\Fp\|^2_{L^{\infty}}\|\nabla\Fp\|^2_{H^{2s}}-\delta\|\Delta^{s+1}\Fp\|^2_{L^2}\nonumber\\
&\quad\geq\frac{2\gamma}{\chi}\int_{\mathbb{R}^d}\sum^3_{k=1}\Big[\Big(\Delta^s\nabla\cdot\frac{\partial f_{Bi}}{\partial(\nabla\Fp)}-\gamma\Delta^{s+1}\Fp\Big)\cdot V_{k}\Big](\Delta^{s+1}\Fp\cdot V_{k})\ud\xx\nonumber\\
&\qquad+\frac{\gamma^2}{\chi}\int_{\mathbb{R}^d}\sum^3_{k=1}(\Delta^{s+1}\Fp\cdot V_{k})^2\ud\xx-C_{\delta}\|\nabla\Fp\|^2_{L^{\infty}}\|\nabla\Fp\|^2_{H^{2s}}-\delta\|\Delta^{s+1}\Fp\|^2_{L^2}\nonumber\\
&\quad\geq \sum^3_{i=1}\int_{\mathbb{R}^d}\Big(k_i|\Delta^s\nabla{\rm div}\nn_i|^2+\sum^3_{j=1}k_{ji}|\Delta^s\nabla\times(\nabla\times\nn_i\cdot\nn_j)|^2\Big)\ud\xx\nonumber\\
&\qquad+\frac{2\gamma}{\chi}\|\Delta^{s+1}\Fp\|^2_{L^2}-C_{\delta}(1+\|\nabla\Fp\|^2_{L^{\infty}})\|\nabla\Fp\|^2_{H^{2s}}-\delta\|\Delta^{s+1}\Fp\|^2_{L^2},
\end{align*}
which implies
\begin{align}\label{Deltas+1-Fp-L2-estimate}
-\Big(\frac{2\gamma}{\chi}-\delta\Big)\|\Delta^{s+1}\Fp\|^2_{L^2}\leq-\sum^3_{k=1}\frac{1}{\chi_k}\|\CH^{\Delta^s}_k\|^2_{L^2}+C_{\delta}(1+\|\nabla\Fp\|^2_{L^{\infty}})\|\nabla\Fp\|^2_{H^{2s}}.
\end{align}
Consequently, plugging the estimates of $\CK_i(i=1,\cdots,5)$ into (\ref{energy-es-fp-tilde}), we obtain
\begin{align}\label{Wtilde-CE-energy-higher-L2}
&\frac{\ud}{\ud t}\widetilde{\CE}_s(\Fp)+\Big(\frac{2\gamma}{\chi}-\delta\Big)\|\Delta^{s+1}\Fp\|^2_{L^2}+\delta\|\Delta^s\nabla\vv\|^2_{L^2}\nonumber\\
&\quad\leq\Big\langle\frac{\eta_3}{\chi_3}\CH^{\Delta^s}_3\sss_3+\frac{\eta_2}{\chi_2}\CH^{\Delta^s}_2\sss_4+\frac{\eta_1}{\chi_1}\CH^{\Delta^s}_1\sss_5,\Delta^s\A\Big\rangle\nonumber\\
&\qquad+\frac{1}{2}\big\langle\CH^{\Delta^s}_3\aaa_1+\CH^{\Delta^s}_2\aaa_2+\CH^{\Delta^s}_1\aaa_3,\Delta^s\BOm\big\rangle\nonumber\\
&\qquad+C_{\delta}(1+\|\nabla\vv\|_{L^{\infty}}+\|\nabla\Fp\|^2_{L^{\infty}})(\|\vv\|_{H^{2s}}+\|\nabla\Fp\|^2_{H^{2s}}).
\end{align}

On the other hand, we need to deal with estimates of higher order derivatives of $\vv$. Similar to (\ref{v-highderivative-L2}), we have
\begin{align}\label{v-highderivative-L2-blowup}
&\frac{1}{2}\frac{\ud}{\ud t}\|\Delta^s\vv\|^2_{L^2}+\eta\|\nabla\Delta^s\vv\|^2_{L^2}\nonumber\\
&\quad=-\big\langle\Delta^s(\vv\cdot\nabla\vv),\Delta^s\nabla\vv\big\rangle-\big\langle\Delta^s\sigma,\Delta^s\nabla\vv\big\rangle+\langle\Delta^s\mathfrak{F},\Delta^s\vv\rangle,\nonumber\\
&\quad\eqdefa I+II+III.
\end{align}
It can be easily derived from Lemma \ref{product-estimate-lemma} that
\begin{align*}
I\leq&C\|\nabla\vv\|_{L^{\infty}}\|\vv\|^2_{H^{2s}},\\
III\leq& C_{\delta}(1+\|\nabla\Fp\|^2_{L^{\infty}})\|\nabla\Fp\|^2_{H^{2s}}+\delta\|\nabla\vv\|^2_{H^{2s}},
\end{align*}
while the term $II$ can be estimated as
\begin{align*}
%II\leq&-\int_{\mathbb{R}^d}\big((\sss_1,\cdots,\sss_5)P(\Delta^s\A\cdot\sss_1,\cdots,\Delta^s\A\cdot\sss_5)^T\big)\cdot\Delta^s\A\ud\xx\\
%&+C_{\delta}(\|\nabla\vv\|_{L^{\infty}}+\|\nabla\Fp\|^2_{L^{\infty}})(\|\vv\|^2_{H^{2s}}+\|\nabla\Fp\|^2_{H^{2s}})+\delta\|\nabla\vv\|^2_{H^{2s}},\\
II\leq&-\bigg(\sum^2_{j=1}\beta_j\|\Delta^s\A\cdot\sss_j\|^2_{L^2}+2\beta_0\int_{\mathbb{R}^d}(\Delta^s\A\cdot\sss_1)(\Delta^s\A\cdot\sss_2)\ud\xx\bigg)\\
&-\Big(\beta_3-\frac{\eta^2_3}{\chi_3}\Big)\|\Delta^s\A\cdot\sss_3\|^2_{L^2}
-\Big(\beta_4-\frac{\eta^2_2}{\chi_2}\Big)\|\Delta^s\A\cdot\sss_4\|^2_{L^2}
-\Big(\beta_5-\frac{\eta^2_1}{\chi_1}\Big)\|\Delta^s\A\cdot\sss_5\|^2_{L^2}\\
&-\Big\langle\frac{\eta_3}{\chi_3}\CH^{\Delta^s}_3\sss_3+\frac{\eta_2}{\chi_2}\CH^{\Delta^s}_2\sss_4+\frac{\eta_1}{\chi_1}\CH^{\Delta^s}_1\sss_5,\Delta^s\A\Big\rangle\nonumber\\
&-\frac{1}{2}\big\langle\CH^{\Delta^s}_3\aaa_1+\CH^{\Delta^s}_2\aaa_2+\CH^{\Delta^s}_1\aaa_3,\Delta^s\BOm\big\rangle+\delta\|\nabla\vv\|^2_{H^{2s}}\\
&+C_{\delta}(1+\|\nabla\vv\|_{L^{\infty}}+\|\nabla\Fp\|^2_{L^{\infty}})(\|\vv\|_{H^{2s}}+\|\nabla\Fp\|^2_{H^{2s}}).
\end{align*}

Hence, substituting the estimates of $I-III$ into (\ref{v-highderivative-L2-blowup}), together with (\ref{low-energy-Fp-v-L2}) and (\ref{Wtilde-CE-energy-higher-L2}), and choosing $\delta>0$ sufficiently small, we arrive at
\begin{align*}
\frac{\ud}{\ud t}\CE_s(\Fp,\vv)\leq C(1+\|\nabla\Fp\|^2_{L^{\infty}}+\|\nabla\vv\|_{L^{\infty}})\CE_s(\Fp,\vv).
\end{align*}
The proof of (\ref{energy-estimate-blowup}) is completed.

\section{Global existence of weak solutions }\label{global-weak-solution-section}

The main purpose of this section is to prove global existence of weak solutions to the system (\ref{new-frame-equation-n1})--(\ref{imcompressible-v}) in dimension two.

For two constants $\tau$ and $T$ with $0\leq\tau<T$, we define two spaces $V(\tau,T)$ and $H(\tau,T)$ as
\begin{align*}
V(\tau,T)\eqdefa&\bigg\{\Fp=(\nn_1,\nn_2,\nn_3):\mathbb{R}^2\times[\tau,T]\rightarrow SO(3)\Big|\Fp(t)\in H^1_{\Fp^*}\big(\mathbb{R}^2,SO(3)\big)~\text{for a.e.}~ t\in[\tau,T]\\
&\quad\text{and satisfies}~\mathop{\mathrm{ess\sup}}_{\tau\leq t\leq T}\int_{\mathbb{R}^2}|\nabla\Fp(\cdot,t)|^2\ud\xx+\int^T_{\tau}\int_{\mathbb{R}^2}(|\nabla^2\Fp|^2+|\partial_t\Fp|^2)\ud\xx\ud t<\infty,\\
&\quad\text{where}~|\nabla\Fp(\cdot,t)|^2=\sum^3_{i=1}|\nabla\nn_i(\cdot,t)|^2,~|\nabla^2\Fp|^2=\sum^3_{i=1}|\nabla^2\nn_i|^2,~|\partial_t\Fp|^2=\sum^3_{i=1}|\partial_t\nn_i|^2\bigg\},\\
H(\tau,T)\eqdefa&\bigg\{\vv:\mathbb{R}^2\times[\tau,T]\rightarrow\mathbb{R}^2\Big| \vv~\text{is measurable and satisfies}\\
&\quad \mathop{\mathrm{ess\sup}}_{\tau\leq t\leq T}\int_{\mathbb{R}^2}|\vv(\cdot,t)|^2\ud\xx
+\int^T_{\tau}\int_{\mathbb{R}^2}|\nabla\vv|^2\ud\xx\ud t<\infty\bigg\}.
\end{align*}

\subsection{A priori regularity estimates}
We first present useful estimates for the $L^4$-norm of the spatial gradient of $u\in V(0,T)$ and the $L^4$-norm of $f\in H(0,T)$, which can be found in \cite{Struwe}.
\begin{lemma}\label{L4-norm-lemma}
There exists constants $C, R_0>0$ such that for any $u\in V(0,T)$ and any $R\in(0,R_0]$, it holds
\begin{align*}
    \int_{\mathbb{R}^\times[0,T]}|\nabla u|^4\ud\xx\ud t\leq& C \mathop{\mathrm{ess\sup}}_{\xx\in\mathbb{R}^2,\tau\leq t\leq T}\int_{B_R(\xx)}|\nabla u(\cdot,t)|^2\ud\xx\\
    &\cdot\bigg(\int_{\mathbb{R}^2\times[0,T]}|\nabla^2 u|^2\ud\xx\ud t+R^{-2}\int_{\mathbb{R}^2\times[0,T]}|\nabla u|^2\ud\xx\ud t\bigg).
\end{align*}
Moreover, there exists a constant $C_1>0$ such that for any $f\in H(0,T)$ and any $R>0$, it follows that
\begin{align*}
    \int_{\mathbb{R}^\times[0,T]}|f|^4\ud\xx\ud t\leq& C_1 \mathop{\mathrm{ess\sup}}_{\xx\in\mathbb{R}^2,\tau\leq t\leq T}\int_{B_R(\xx)}|f(\cdot,t)|^2\ud\xx\\
    &\cdot\bigg(\int_{\mathbb{R}^2\times[0,T]}|\nabla f|^2\ud\xx\ud t+R^{-2}\int_{\mathbb{R}^2\times[0,T]}|f|^2\ud\xx\ud t\bigg).
\end{align*}
\end{lemma}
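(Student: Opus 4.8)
The plan is to deduce both inequalities from a scaled (localized) two-dimensional Ladyzhenskaya--Gagliardo--Nirenberg inequality together with a finite-overlap covering of $\mathbb{R}^2$ by balls of radius $R$, following \cite{Struwe}. Since the two statements have the same structure, I would treat them uniformly by setting $w=\nabla u$ in the first case (so that $\nabla w=\nabla^2u$ and the $L^2$ bounds come from the definition of $V(0,T)$) and $w=f$ in the second (with the $L^2$ bounds coming from $H(0,T)$), and I would work first at a.e.\ fixed time $t$. The elementary building block is: for every ball $B_R(\xx)\subset\mathbb{R}^2$ and every $w\in H^1(B_R(\xx))$,
\[
\int_{B_R(\xx)}|w|^4\,\ud\yy\le C\Big(\int_{B_R(\xx)}|w|^2\,\ud\yy\Big)\Big(\int_{B_R(\xx)}|\nabla w|^2\,\ud\yy+R^{-2}\int_{B_R(\xx)}|w|^2\,\ud\yy\Big),
\]
with $C$ an absolute constant. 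This is obtained by rescaling $B_R(\xx)$ to the unit ball and applying the embedding $\|w\|_{L^4(B_1)}^2\le C\|w\|_{L^2(B_1)}\|w\|_{H^1(B_1)}$; the coefficient $R^{-2}$ on the lower-order term is exactly what the scaling produces.

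Next I would fix a lattice cover $\{B_R(\xx_j)\}_j$ of $\mathbb{R}^2$, say with $\xx_j\in(R/\sqrt2)\,\mathbb{Z}^2$, which has an overlap number $N_0$ depending only on the dimension. Summing the local inequality over $j$: on the left $\sum_j\int_{B_R(\xx_j)}|w|^4\ge\int_{\mathbb{R}^2}|w|^4$; on the right, each factor $\int_{B_R(\xx_j)}|w|^2$ is bounded by $\mathop{\mathrm{ess\sup}}_{\xx\in\mathbb{R}^2}\int_{B_R(\xx)}|w|^2$, which can be pulled out of the sum, while the finite overlap gives $\sum_j\big(\int_{B_R(\xx_j)}|\nabla w|^2+R^{-2}\int_{B_R(\xx_j)}|w|^2\big)\le N_0\big(\|\nabla w\|_{L^2(\mathbb{R}^2)}^2+R^{-2}\|w\|_{L^2(\mathbb{R}^2)}^2\big)$. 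This yields, for a.e.\ $t$,
\[
\int_{\mathbb{R}^2}|w(\cdot,t)|^4\,\ud\xx\le C\Big(\mathop{\mathrm{ess\sup}}_{\xx\in\mathbb{R}^2}\int_{B_R(\xx)}|w(\cdot,t)|^2\,\ud\xx\Big)\Big(\int_{\mathbb{R}^2}|\nabla w(\cdot,t)|^2\,\ud\xx+R^{-2}\int_{\mathbb{R}^2}|w(\cdot,t)|^2\,\ud\xx\Big).
\]

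Finally I would integrate this in $t$ over $[0,T]$: bound the spatial supremum further by $\mathop{\mathrm{ess\sup}}_{\xx\in\mathbb{R}^2,\,0\le t\le T}\int_{B_R(\xx)}|w(\cdot,t)|^2\,\ud\xx$, take it outside the integral, and identify the time-integrated remainder with $\int_{\mathbb{R}^2\times[0,T]}|\nabla w|^2\,\ud\xx\,\ud t+R^{-2}\int_{\mathbb{R}^2\times[0,T]}|w|^2\,\ud\xx\,\ud t$. Choosing $w=\nabla u$ gives the first estimate for $u\in V(0,T)$, and $w=f$ gives the second for $f\in H(0,T)$, all the right-hand quantities being finite by the definitions of $V(0,T)$ and $H(0,T)$. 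I do not expect any genuine obstacle: the only points needing care are getting the scaling of the local $L^4$--$H^1$ inequality right (in particular the $R^{-2}$ coefficient) and checking that the overlap number, hence the final constant, is dimensional and independent of $R$; the restriction $R\le R_0$ in the first statement is only there to keep $C$ uniform and is harmless, whereas for $f$ no restriction on $R$ is needed. I would therefore state the local inequality, carry out the covering sum, and otherwise refer to \cite{Struwe} for the details.
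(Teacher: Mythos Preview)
Your proposal is correct and is precisely the standard Struwe argument; the paper itself does not supply a proof but simply cites \cite{Struwe}, so your sketch in fact goes beyond what the paper provides.
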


\begin{proposition}\label{Fp-v-4-prop}
Let $(\Fp,\vv)\in V(0,T)\times H(0,T)$ be a solution to the system {\rm (\ref{new-frame-equation-n1})--(\ref{imcompressible-v})} with initial data $(\Fp^{(0)},\vv^{(0)})\in H^1_{\Fp^{*}}\big(\mathbb{R}^2,SO(3)\big)\times L^2(\mathbb{R}^2,\mathbb{R}^2)$. Then there exists constants $\ve_1, R_0>0$ such that if
\begin{align*}
  \mathop{\mathrm{ess\sup}}_{\xx\in\mathbb{R}^2,\tau\leq t\leq T}\int_{B_R(\xx)}|\nabla \Fp(\cdot,t)|^2+|\vv(\cdot,t)|^2\ud\xx<\ve_1
\end{align*}
for any $R\in(0,R_0]$, then
\begin{align}
&\int_{\mathbb{R}^2\times[0,T]}\big(|\nabla^2\Fp|^2+|\nabla\vv|^2\big)\ud\xx\ud t\leq C(1+TR^{-2})E\big(\Fp^{(0)},\vv^{(0)}\big),\label{nabla2Fp-v-L2}\\
&\int_{\mathbb{R}^2\times[0,T]}\big(|\nabla\Fp|^4+|\vv|^4\big)\ud\xx\ud t\leq C\ve_1(1+TR^{-2})E\big(\Fp^{(0)},\vv^{(0)}\big).\label{nablaFp-v-L4}
\end{align}
\end{proposition}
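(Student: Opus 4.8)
The plan is to combine the basic energy dissipation law (Proposition~\ref{energ-diss-prop}), the coercivity of the rotational part of the dissipation already isolated in the blow-up argument, and Struwe's interpolation inequality (Lemma~\ref{L4-norm-lemma}), closing the estimates by choosing $\ve_1$ small. Write $E_0\eqdefa E\big(\Fp^{(0)},\vv^{(0)}\big)$. Since the conditions \eqref{coefficient-conditions} ($\beta_0^2\le\beta_1\beta_2$, $\beta_i\ge\eta_i^2/\chi_i$) make each of the three $\A\cdot\sss_i$ brackets on the left of \eqref{energy-dissipative-law} nonnegative, integrating the energy law in time yields
\begin{align*}
\eta\int_0^T\|\nabla\vv\|_{L^2}^2\,\ud t+\sum_{k=1}^3\frac{1}{\chi_k}\int_0^T\|\ML_k\CF_{Bi}\|_{L^2}^2\,\ud t\le E_0,
\end{align*}
and, because $f_{Bi}=\tfrac12\sum_i\gamma_i|\nabla\nn_i|^2+W$ with $W\ge0$ by \eqref{new-elasitic-density}, also $\sup_{[0,T]}\big(\|\nabla\Fp\|_{L^2}^2+\|\vv\|_{L^2}^2\big)\le CE_0$, hence $\int_0^T\big(\|\nabla\Fp\|_{L^2}^2+\|\vv\|_{L^2}^2\big)\,\ud t\le CTE_0$. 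In particular the $|\nabla\vv|^2$ part of \eqref{nabla2Fp-v-L2} is immediate, and it remains to bound $\int_{\mathbb{R}^2\times[0,T]}|\nabla^2\Fp|^2$.

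The decisive ingredient — and the main obstacle — is the lowest-order coercivity estimate
\begin{align*}
\sum_{k=1}^3\frac{1}{\chi_k}\|\ML_k\CF_{Bi}\|_{L^2}^2\ge\frac{2\gamma^2}{\chi}\|\Delta\Fp\|_{L^2}^2-C\big(\|\nabla\Fp\|_{L^4}^4+\|\nabla\Fp\|_{L^2}^2\big),\qquad \gamma=\min_i\gamma_i,\ \chi=\max_i\chi_i,
\end{align*}
i.e.\ the $s=0$ case of \eqref{Higher-order-key-estimate}. It is obtained by the same mechanism as in the blow-up section leading to \eqref{Deltas+1-Fp-L2-estimate}: write $\ML_k\CF_{Bi}=V_k\cdot\tfrac{\delta\CF_{Bi}}{\delta\Fp}$, insert Lemma~\ref{h-decomposition}, and apply the pointwise orthogonal decomposition \eqref{AB-orthogonal-decomposition} first to $A=\Delta\Fp$ (for which $W_\alpha\cdot\Delta\Fp=O(|\nabla\Fp|^2)$ since $|\nn_\alpha|\equiv1$) and then to $A=\nabla\cdot\tfrac{\partial f_{Bi}}{\partial(\nabla\Fp)}-\gamma\Delta\Fp$; after integration by parts the $k_i$- and $k_{ji}$-terms appear with a favorable sign, while every remaining contribution is of the form $O\big(|\nabla^2\Fp|\,|\nabla\Fp|^2+|\nabla\Fp|^2\big)$ and is disposed of by Young's inequality, producing the stated $\|\nabla\Fp\|_{L^4}^4$ and $\|\nabla\Fp\|_{L^2}^2$ errors (note that no $L^\infty$ norm of $\nabla\Fp$ appears at this order, in contrast to the $s\ge2$ case). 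Since $\nn_i-\nn^{*}_i\in H^2(\mathbb{R}^2)$ for a.e.\ $t$ by the definition of $V(0,T)$, one has $\|\nabla^2\nn_i\|_{L^2}=\|\Delta\nn_i\|_{L^2}$, so this controls $\|\nabla^2\Fp\|_{L^2}^2$ up to lower-order terms.

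Integrating the coercivity estimate in $t$ and using the first step gives $\int_0^T\|\nabla^2\Fp\|_{L^2}^2\,\ud t\le C(1+T)E_0+C\int_0^T\|\nabla\Fp\|_{L^4}^4\,\ud t$. Now invoke Lemma~\ref{L4-norm-lemma} with $R_0$ from that lemma and the smallness hypothesis of the proposition:
\begin{align*}
\int_{\mathbb{R}^2\times[0,T]}|\nabla\Fp|^4\,\ud\xx\,\ud t&\le C\ve_1\Big(\int_0^T\|\nabla^2\Fp\|_{L^2}^2\,\ud t+R^{-2}\int_0^T\|\nabla\Fp\|_{L^2}^2\,\ud t\Big)\\
&\le C\ve_1\int_0^T\|\nabla^2\Fp\|_{L^2}^2\,\ud t+C\ve_1 TR^{-2}E_0,
\end{align*}
and choose $\ve_1$ so small that $C\ve_1\le\tfrac12$; absorbing $\int_0^T\|\nabla^2\Fp\|_{L^2}^2\,\ud t$ into the left-hand side gives \eqref{nabla2Fp-v-L2} (using $1+T+TR^{-2}\le C(1+TR^{-2})$ for $R\le R_0$). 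Feeding this bound back into Lemma~\ref{L4-norm-lemma} for $\nabla\Fp$, and applying its second part to $\vv$ together with $\int_0^T\|\nabla\vv\|_{L^2}^2\,\ud t\le E_0/\eta$ and $\int_0^T\|\vv\|_{L^2}^2\,\ud t\le CTE_0$, yields \eqref{nablaFp-v-L4}. All of this should first be carried out for the regularized/approximate solutions (as constructed in Section~\ref{local-well-posed-blowup-section}), where the integrations by parts are justified, with constants independent of the regularization, and then transferred to $(\Fp,\vv)$ by weak lower semicontinuity; the genuinely delicate point throughout is the sign structure behind the coercivity estimate — everything else is bookkeeping.
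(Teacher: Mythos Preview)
Your proposal is correct and follows essentially the same route as the paper: energy dissipation from Proposition~\ref{energ-diss-prop}, the $s=0$ coercivity estimate for $\sum_k\chi_k^{-1}\|\ML_k\CF_{Bi}\|_{L^2}^2$ via the orthogonal decomposition \eqref{AB-orthogonal-decomposition} (this is exactly the paper's \eqref{higher-deriv-estimate}), and then Lemma~\ref{L4-norm-lemma} with $\ve_1$ small to close. The only cosmetic discrepancy is your extra $\|\nabla\Fp\|_{L^2}^2$ error in the coercivity step, which does not actually appear (the lower-order remainder is $O(|\nabla\Fp|^2|\nabla^2\Fp|+|\nabla\Fp|^4)$ and, after Young, contributes only $\delta\|\nabla^2\Fp\|_{L^2}^2+C_\delta\|\nabla\Fp\|_{L^4}^4$); this is harmless but makes your intermediate $(1+T)$ factor unnecessary.
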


\begin{proof}
It immediately follows from Proposition \ref{energ-diss-prop} that
\begin{align}\label{dissip-ineq-time}
\sum^3_{k=1}\frac{1}{\chi_k}\int_{\mathbb{R}^2\times[0,T]}|\ML_k\CF_{Bi}|^2\ud\xx\ud t+\eta\int_{\mathbb{R}^2\times[0,T]}|\nabla\vv|^2\ud\xx\ud t
\leq E\big(\Fp^{(0)},\vv^{(0)}\big).
\end{align}
It remains to estimate the first term of the left-side of (\ref{dissip-ineq-time}). First, we recall again the following equality with the help of orthogonal decomposition:
\begin{align}\label{A-DFp-orthogonal-decomp}
A\cdot\Delta\Fp=\sum^3_{k=1}\frac{1}{|V_k|^2}(A\cdot V_k)(\Delta\Fp\cdot V_k)+\sum^6_{k=1}\frac{1}{|W_k|^2}(A\cdot W_k)(\Delta\Fp\cdot W_k),
\end{align}
for any matrix $A\in\mathbb{R}^{3\times3}$, where $V_k(k=1,2,3)$ and $W_k(k=1,\cdots,6)$ are an orthogonal basis of the tangent space $T_{\Fp}SO(3)$ and its associated orthogonal complement space, respectively. According the definitions of $W_k(k=1,\cdots,6)$, we have
\begin{align*}
\Delta\Fp\cdot W_k=\nabla\cdot(\nabla\Fp\cdot W_k)-\nabla\Fp\cdot\nabla W_k=-\nabla\Fp\cdot\nabla W_k.
\end{align*}
Taking $A=\Delta\Fp$ in (\ref{A-DFp-orthogonal-decomp}), we deduce that
\begin{align}\label{DeFp-decomp}
\int_{\mathbb{R}^2\times[0,T]}|\Delta\Fp|^2\ud\xx\ud t\leq&\frac{1}{2}\int_{\mathbb{R}^2\times[0,T]}\sum^3_{k=1}(\Delta\Fp\cdot V_k)^2\ud\xx\ud t\nonumber\\
&+C\int_{\mathbb{R}^2\times[0,T]}|\nabla\Fp|^2(|\nabla^2\Fp|+|\nabla\Fp|^2)\ud\xx\ud t.
\end{align}
Similarly, taking
\begin{align*}
A=\nabla\cdot\frac{\partial f_{Bi}}{\partial(\nabla\Fp)}-\gamma\Delta\Fp,\quad \gamma=\min\{\gamma_1,\gamma_2,\gamma_3\}
\end{align*}
in (\ref{A-DFp-orthogonal-decomp}), we obtain
\begin{align}\label{Y-DeFp-decomp}
&\int_{\mathbb{R}^2\times[0,T]}\Big(\nabla\cdot\frac{\partial f_{Bi}}{\partial(\nabla\Fp)}-\gamma\Delta\Fp\Big)\cdot\Delta\Fp\ud\xx\ud t\nonumber\\
&\quad\leq\frac{1}{2}\int_{\mathbb{R}\times[0,T]}\sum^3_{k=1}\Big[\Big(\nabla\cdot\frac{\partial f_{Bi}}{\partial(\nabla\Fp)}-\gamma\Delta\Fp\Big)\cdot V_k\Big](\Delta\Fp\cdot V_k)\ud\xx\ud t\nonumber\\
&\qquad+C\int_{\mathbb{R}^2\times[0,T]}|\nabla\Fp|^2(|\nabla^2\Fp|+|\nabla\Fp|^2)\ud\xx\ud t.
\end{align}
Moreover, by Lemma \ref{h-decomposition} and integration by parts, it follows that
\begin{align}\label{Y-DeFp-decomp-neq}
&\int_{\mathbb{R}^2\times[0,T]}\Big(\nabla\cdot\frac{\partial f_{Bi}}{\partial(\nabla\Fp)}-\gamma\Delta\Fp\Big)\cdot\Delta\Fp\ud\xx\ud t\nonumber\\
&\quad\geq\sum^3_{i=1}\int_{\mathbb{R}^2\times[0,T]}\Big(k_i|\nabla{\rm div}\nn_i|^2+\sum^3_{j=1}k_{ji}|\nabla(\nn_j\cdot(\nabla\times\nn_i))|^2\Big)\ud\xx\ud t\nonumber\\
&\qquad-C\int_{\mathbb{R}^2\times[0,T]}|\nabla\Fp|^2(|\nabla^2\Fp|+|\nabla\Fp|^2)\ud\xx\ud t.
\end{align}

We let $\chi^{-1}=\min\{\chi^{-1}_1,\chi^{-1}_2,\chi^{-1}_3\}$, and recall
\begin{align*}
\frac{\delta\CF_{Bi}}{\delta\Fp}=-\nabla\cdot\frac{\partial f_{Bi}}{\partial(\nabla\Fp)}+\frac{\partial f_{Bi}}{\partial\Fp},\quad \frac{\delta\CF_{Bi}}{\delta\Fp}=\Big(\frac{\delta\CF_{Bi}}{\delta\nn_1},\frac{\delta\CF_{Bi}}{\delta\nn_2},\frac{\delta\CF_{Bi}}{\delta\nn_3}\Big),
\end{align*}
where $\frac{\partial f_{Bi}}{\partial\Fp}$ consists of the lower-order derivatives. Then, from the definitions of $\ML_k(k=1,2,3)$ and (\ref{DeFp-decomp})--(\ref{Y-DeFp-decomp-neq}), there holds
\begin{align}\label{higher-deriv-estimate}
&\sum^3_{k=1}\frac{1}{\chi_k}\int_{\mathbb{R}^2\times[0,T]}|\ML_k\CF_{Bi}|^2\ud\xx\ud t
=\sum^3_{k=1}\frac{1}{\chi_k}\int_{\mathbb{R}^2\times[0,T]}\Big(V_k\cdot\frac{\delta\CF_{Bi}}{\delta\Fp}\Big)^2\ud\xx\ud t\nonumber\\
&\quad\geq\frac{1}{\chi}\int_{\mathbb{R}^2\times[0,T]}\sum^3_{k=1}\Big(V_k\cdot\Big(\nabla\cdot\frac{\partial f_{Bi}}{\partial(\nabla\Fp)}\Big)\Big)^2\ud\xx\ud t-C\int_{\mathbb{R}^2\times[0,T]}|\nabla\Fp|^2(|\nabla^2\Fp|+|\nabla\Fp|^2)\ud\xx\ud t\nonumber\\
&\quad\geq\frac{2\gamma}{\chi}\int_{\mathbb{R}^2\times[0,T]}\sum^3_{k=1}\Big[\Big(\nabla\cdot\frac{\partial f_{Bi}}{\partial(\nabla\Fp)}-\gamma\Delta\Fp\Big)\cdot V_{k}\Big](\Delta\Fp\cdot V_{k})\ud\xx\ud t\nonumber\\
&\qquad+\frac{\gamma^2}{\chi}\int_{\mathbb{R}^2\times[0,T]}\sum^3_{k=1}(\Delta\Fp\cdot V_{k})^2\ud\xx\ud t-C\int_{\mathbb{R}^2\times[0,T]}|\nabla\Fp|^2(|\nabla^2\Fp|+|\nabla\Fp|^2)\ud\xx\ud t\nonumber\\
&\quad\geq\frac{2\gamma}{\chi}\sum^3_{i=1}\int_{\mathbb{R}^2\times[0,T]}\Big(k_i|\nabla{\rm div}\nn_i|^2+\sum^3_{j=1}k_{ji}|\nabla(\nn_j\cdot(\nabla\times\nn_i))|^2\Big)\ud\xx\ud t\nonumber\\
&\qquad+\frac{2\gamma^2}{\chi}\int_{\mathbb{R}^2\times[0,T]}|\Delta\Fp|^2\ud\xx\ud t-C\int_{\mathbb{R}^2\times[0,T]}|\nabla\Fp|^2(|\nabla^2\Fp|+|\nabla\Fp|^2)\ud\xx\ud t.
\end{align}
Thus, a combination of (\ref{dissip-ineq-time}) and (\ref{higher-deriv-estimate}) leads to
\begin{align*}
&\frac{2\gamma^2}{\chi}\int_{\mathbb{R}^2\times[0,T]}|\Delta\Fp|^2\ud\xx\ud t+\eta\int_{\mathbb{R}^2\times[0,T]}|\nabla\vv|^2\ud\xx\ud t\nonumber\\
&\quad\leq \frac{\gamma^2}{\chi}\int_{\mathbb{R}^2\times[0,T]}|\nabla^2\Fp|^2\ud\xx\ud t+C\int_{\mathbb{R}^2\times[0,T]}|\nabla\Fp|^4\ud\xx\ud t+E\big(\Fp^{(0)},\vv^{(0)}\big).
\end{align*}

Notice that
\begin{align*}
    \int_{\mathbb{R}^2\times[0,T]}|\Delta\Fp|^2\ud\xx\ud t=\int_{\mathbb{R}^2\times[0,T]}|\nabla^2\Fp|^2\ud\xx\ud t.
\end{align*}
Applying Lemma \ref{L4-norm-lemma},
%and choosing sufficiently small $\ve_1>0$ {\color{red}[CHECK]},
we arrive at
\begin{align*}
\int_{\mathbb{R}^2\times[0,T]}(|\nabla\Fp|^4+|\vv|^4)\ud\xx\ud t\leq& C\ve_1\int_{\mathbb{R}^2\times[0,T]}(|\nabla^2\Fp|^2+|\nabla\vv|^2)\ud\xx\ud t\\
&+C\ve_1R^{-2}\int_{\mathbb{R}^2\times[0,T]}(|\nabla\Fp|^2+|\vv|^2)\ud\xx\ud t.
\end{align*}
Therefore, (\ref{nabla2Fp-v-L2}) and (\ref{nablaFp-v-L4}) hold by using the above two estimates and choosing $\ve_1>0$ to be sufficiently small.

\end{proof}

The following proposition gives a local energy inequality for a smooth solution $(\Fp,\vv)$ under the condition of uniformly small local energy.

\begin{proposition}\label{local-monotonic-ineq-prop}
Let $(\Fp,\vv)\in V(0,T)\times H(0,T)$ be a solution to the system {\rm (\ref{new-frame-equation-n1})--(\ref{imcompressible-v})} with initial data $(\Fp^{(0)},\vv^{(0)})\in H^1_{\Fp^{*}}\big(\mathbb{R}^2,SO(3)\big)\times L^2(\mathbb{R}^2,\mathbb{R}^2)$. Assume that there exists constants $\ve_1, R_0>0$ such that
\begin{align*}
 \mathop{\mathrm{ess\sup}}_{\xx\in\mathbb{R}^2,\tau\leq t\leq T}\int_{B_{R_0}(\xx)}\big(|\nabla \Fp(\cdot,t)|^2+|\vv(\cdot,t)|^2\big)\ud\xx<\ve_1.
\end{align*}
Then, for all $s\in[0,T], \xx_0\in\mathbb{R}^2$ and $R\leq R_0$, it follows that
\begin{align*}
&\int_{B_R(\xx_0)}e(\Fp,\vv)(\cdot,s)\ud\xx+\frac{\eta}{2}\int^s_0\int_{B_R(\xx_0)}|\nabla\vv|^2\ud\xx\ud t+\sum^3_{k=1}\frac{1}{2\chi_k}\int^s_0\int_{B_R(\xx_0)}|\ML_k\CF_{Bi}|^2\ud\xx\ud t\\
&\quad\leq\int_{B_{2R}(\xx_0)}e\big(\Fp^{(0)},\vv^{(0)}\big)\ud\xx+C_2\frac{s^{\frac{1}{2}}}{R}\Big(1+\frac{s}{R^2}\Big)^{\frac{1}{2}}E\big(\Fp^{(0)},\vv^{(0)}\big),
\end{align*}
where $C_2$ is a uniform positive constant.
\end{proposition}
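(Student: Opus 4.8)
The plan is to localize the energy identity behind Proposition~\ref{energ-diss-prop} with a spatial cutoff and then to control the error terms created by the localization using the a priori bounds of Propositions~\ref{energ-diss-prop} and~\ref{Fp-v-4-prop}. Fix $\xx_0\in\mathbb{R}^2$ and $R\in(0,R_0]$, and choose $\varphi\in C^\infty_c(B_{2R}(\xx_0))$ with $\varphi\equiv1$ on $B_R(\xx_0)$, $0\le\varphi\le1$ and $|\nabla\varphi|\le C/R$. Repeating the computation of Proposition~\ref{energ-diss-prop}, but testing \eqref{new-frame-equation-n1}--\eqref{new-frame-equation-n3} against $\varphi^2\frac{\delta\CF_{Bi}}{\delta\nn_i}$ and \eqref{frame-equation-v} against $\varphi^2\vv$, one obtains
\begin{align*}
\frac{\ud}{\ud t}\int_{\mathbb{R}^2}\varphi^2e(\Fp,\vv)\,\ud\xx+\int_{\mathbb{R}^2}\varphi^2\Big(\eta|\nabla\vv|^2+\sum^3_{k=1}\frac1{\chi_k}|\ML_k\CF_{Bi}|^2+Q(\A)\Big)\ud\xx=\mathcal{E}(t),
\end{align*}
where $Q(\A)\ge0$ is the same nonnegative quadratic form in $\A$ as in \eqref{energy-law} (nonnegativity being exactly \eqref{coefficient-conditions}), and $\mathcal{E}(t)$ collects the terms where a derivative falls on $\varphi$: $\int p\,\vv\cdot\nabla(\varphi^2)$, $\frac12\int|\vv|^2\vv\cdot\nabla(\varphi^2)$, $-\eta\int\nabla\vv:(\nabla(\varphi^2)\otimes\vv)$, $-\int\sigma:(\nabla(\varphi^2)\otimes\vv)$ and $-\sum_i\int\nabla(\varphi^2)\cdot\frac{\partial f_{Bi}}{\partial(\nabla\nn_i)}\cdot\partial_t\nn_i$. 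It is essential here that the cancellation of the body force $\mathfrak{F}$ against the transport term in \eqref{F-v-inner} and the algebraic identity \eqref{sigma-e-nabla-v} for $\sigma:\nabla\vv$ are pointwise (they rely only on \eqref{relation-CD-Fp}), hence they survive multiplication by $\varphi^2$ and produce no new $\nabla\varphi$ terms.

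Next I would bound $\int^s_0\mathcal{E}(t)\,\ud t$. Using the frame equations, $|\sigma|\le C(|\nabla\vv|+\sum_k|\ML_k\CF_{Bi}|)$, $|\partial_t\Fp|\le C(|\vv|\,|\nabla\Fp|+|\nabla\vv|+\sum_k|\ML_k\CF_{Bi}|)$ and $|\partial f_{Bi}/\partial(\nabla\Fp)|\le C|\nabla\Fp|$; since also $|\nabla\varphi|\le C/R$ and $\operatorname{supp}\nabla\varphi\subset B_{2R}(\xx_0)$, each term of $\mathcal{E}$ is, after time integration, a $C/R$-multiple of a space--time integral over $B_{2R}(\xx_0)\times[0,s]$ of a product of factors each controlled either by Proposition~\ref{energ-diss-prop} in $L^2(\mathbb{R}^2\times[0,T])$ (the dissipative ones $|\nabla\vv|,|\ML_k\CF_{Bi}|$) or in $L^\infty_tL^2_x$ (the factors $|\vv|,|\nabla\Fp|$, with $\sup_t(\|\vv\|^2_{L^2}+\|\nabla\Fp\|^2_{L^2})\le CE(\Fp^{(0)},\vv^{(0)})$), or by Proposition~\ref{Fp-v-4-prop} applied on $[0,s]$ in $L^4(\mathbb{R}^2\times[0,s])$ (again $|\vv|,|\nabla\Fp|$, with $\int_{\mathbb{R}^2\times[0,s]}(|\nabla\Fp|^4+|\vv|^4)\le C\ve_1(1+sR^{-2})E(\Fp^{(0)},\vv^{(0)})$). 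For the viscous and stress errors and the $|\nabla\Fp|(|\nabla\vv|+\sum_k|\ML_k\CF_{Bi}|)$ part of the elastic error, Young's inequality splits off a piece $\delta\int^s_0\int\varphi^2(\eta|\nabla\vv|^2+\sum_k\frac1{\chi_k}|\ML_k\CF_{Bi}|^2)$, to be absorbed on the left, plus $\frac{C_\delta}{R^2}\int^s_0\int_{B_{2R}(\xx_0)}(|\vv|^2+|\nabla\Fp|^2)\le\frac{C_\delta s}{R^2}E(\Fp^{(0)},\vv^{(0)})$, and I use the elementary bound $\frac{s}{R^2}\le\frac{s^{1/2}}{R}(1+\frac{s}{R^2})^{1/2}$. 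For the genuinely cubic errors (the convection term and the $|\vv|\,|\nabla\Fp|^2$ part of the elastic error), Hölder in space--time with exponents $(4,4,2)$ (and $\|\vv\|^3_{L^3}\le\|\vv\|_{L^2}\|\vv\|^2_{L^4}$ for the convection term) together with the above bounds gives a contribution $C\ve_1^{1/2}\frac{s^{1/2}}{R}(1+\frac{s}{R^2})^{1/2}E(\Fp^{(0)},\vv^{(0)})$.

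The remaining and most delicate term is the pressure error $\int^s_0\int p\,\vv\cdot\nabla(\varphi^2)$; handling it is the main obstacle. One needs a quantitative $L^2$-bound on $p$. The pressure Poisson equation reads $-\Delta p=\partial_i\partial_j(v_iv_j-\sigma_{ij})-\partial_i\mathfrak{F}_i$, and the key is the structural identity $\mathfrak{F}_i=\partial_j\sigma^{el}_{ij}+\partial_i f_{Bi}$ with $\sigma^{el}_{ij}=-\frac{\partial f_{Bi}}{\partial(\partial_j\nn_\alpha)}\partial_i\nn_\alpha$, so $|\sigma^{el}|\le C|\nabla\Fp|^2$; this identity (the biaxial analogue of the Ericksen-stress identity) follows from \eqref{relation-CD-Fp} and the definition of $\hh_i$, exactly as in the derivation of \eqref{external-force-F}, and verifying it for the full biaxial elasticity $f_{Bi}$ is what requires care. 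Then $-\Delta(p-f_{Bi})=\partial_i\partial_j(v_iv_j-\sigma_{ij}-\sigma^{el}_{ij})$, and the Calderón--Zygmund inequality yields $\|p\|_{L^2(\mathbb{R}^2)}\le C(\|\vv\|^2_{L^4}+\|\nabla\vv\|_{L^2}+\sum_k\|\ML_k\CF_{Bi}\|_{L^2}+\|\nabla\Fp\|^2_{L^4})$, hence $\|p\|^2_{L^2(\mathbb{R}^2\times[0,s])}\le C(1+sR^{-2})E(\Fp^{(0)},\vv^{(0)})$ by Propositions~\ref{energ-diss-prop} and~\ref{Fp-v-4-prop}. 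Since $\int\vv\cdot\nabla(\varphi^2)=0$, the pressure term is independent of an additive constant in $p$, and Cauchy--Schwarz over $B_{2R}(\xx_0)\times[0,s]$ with $\|\vv\|^2_{L^2(B_{2R}(\xx_0)\times[0,s])}\le CsE(\Fp^{(0)},\vv^{(0)})$ bounds it by $C\frac{s^{1/2}}{R}(1+\frac{s}{R^2})^{1/2}E(\Fp^{(0)},\vv^{(0)})$.

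Finally, I would choose $\delta>0$ small enough to absorb the dissipative pieces of $\mathcal{E}$ into the left-hand side, integrate the localized identity over $[0,s]$, and use $\varphi\equiv1$ on $B_R(\xx_0)$, $0\le\varphi\le1$, $\operatorname{supp}\varphi\subset B_{2R}(\xx_0)$, $e\ge0$ and $Q(\A)\ge0$; after these steps exactly half of the localized dissipation survives on $B_R(\xx_0)$, and the claimed inequality follows with $C_2$ depending only on the coefficients of the system (and on the fixed constant $\ve_1$).
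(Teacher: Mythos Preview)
Your proposal is correct and follows the same overall strategy as the paper: localize the basic energy identity with a cutoff $\varphi$, then control the resulting $\nabla\varphi$-errors using Propositions~\ref{energ-diss-prop} and~\ref{Fp-v-4-prop} together with a Calder\'on--Zygmund estimate for the pressure. Two minor differences are worth noting. First, you correctly isolate the elastic boundary term $-\sum_i\int\nabla(\varphi^2)\cdot\frac{\partial f_{Bi}}{\partial(\nabla\nn_i)}\,\partial_t\nn_i$, which the paper's display \eqref{Fp-L-2-phi} in fact omits; this is a harmless oversight there, since the term is handled exactly as your $\CA_5$-type terms are. Second, for the pressure the paper directly bounds $\|p\|_{L^2_{t,\xx}}^2$ by $\int(|\nabla\Fp|^4+|\vv|^4+|\nabla\vv|^2+|\nabla^2\Fp|^2)$, relying on Proposition~\ref{Fp-v-4-prop}'s control of $\nabla^2\Fp$, whereas you invoke the Ericksen-stress identity $\mathfrak{F}_i=\partial_j\sigma^{el}_{ij}+\partial_i f_{Bi}$ to put the right-hand side of the pressure Poisson equation entirely in double-divergence form; both routes give the same bound, and yours avoids needing the $\nabla^2\Fp$ estimate for this particular step.
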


\begin{proof}
Let $\phi\in C^{\infty}_c(B_{2R}(\xx_0))$ be a cut-off function with $\phi\equiv 1$ on $B_R(\xx_0)$ and $|\nabla\phi|\leq \frac{C}{R}$, $|\nabla^2\phi|\leq\frac{C}{R^2}$ for all $R\leq R_0$.

Multiplying the equation (\ref{frame-equation-v}) by $\vv\phi^2$ and integrating by parts over the space $\mathbb{R}^2$, we obtain
\begin{align}\label{v-L-2-phi}
&\frac{1}{2}\frac{\ud}{\ud t}\int_{\mathbb{R}^2}|\vv|^2\phi^2\ud\xx+\eta\int_{\mathbb{R}^2}|\nabla\vv|^2\phi^2\ud\xx\nonumber\\
&\quad=\int_{\mathbb{R}^2}(|\vv|^2+2p)\phi\vv\cdot\nabla\phi\ud\xx+\eta\int_{\mathbb{R}^2}|\vv|^2(|\nabla\phi|^2+\phi\Delta\phi)\ud\xx\nonumber\\
&\qquad\underbrace{-\langle\sigma,\nabla(\vv\phi^2)\rangle}_{\CA_1}
%\underbrace{-\langle\sigma_2,\nabla(\vv\phi^2)\rangle}_{\CA_2}
+\underbrace{\langle\mathfrak{F},\vv\phi^2\rangle}_{\CA_2},
\end{align}
where we have used the following facts:
\begin{align*}
\big\langle\partial_t\vv-\eta\Delta\vv,\vv\phi\big\rangle=&\frac{1}{2}\frac{\ud}{\ud t}\int_{\mathbb{R}^2}|\vv|^2\phi^2\ud\xx+\eta\int_{\mathbb{R}^2}|\nabla\vv|^2\phi^2\ud\xx\\
&-\eta\int_{\mathbb{R}^2}|\vv|^2(|\nabla\phi|^2+\phi\Delta\phi)\ud\xx,\\
\langle\vv\cdot\nabla\vv+\nabla p,\vv\phi^2\rangle=&-\int_{\mathbb{R}^2}(|\vv|^2+2p)\phi\vv\cdot\nabla\phi\ud\xx.
\end{align*}
Using (\ref{sigma-e-nabla-v}) and integrating by parts, we deduce that
\begin{align*}
%&\CA_1+\int_{\mathbb{R}^2}(\sigma_1)_{ij}v_i\partial_j(\phi^2)\ud\xx=-\int_{\mathbb{R}^2}\omega^T_sP\omega_s\phi^2\ud\xx\leq 0,\\
&\CA_1+\int_{\mathbb{R}^2}\sigma_{ij}v_i\partial_j(\phi^2)\ud\xx\\
&\quad\leq-\Big(\frac{\eta_3}{\chi_3}\int_{\mathbb{R}^2}(\A\cdot\sss_3)\ML_3\CF_{Bi}\phi^2\ud\xx+\frac{\eta_2}{\chi_2}\int_{\mathbb{R}^2}(\A\cdot\sss_4)\ML_2\CF_{Bi}\phi^2\ud\xx+\frac{\eta_1}{\chi_1}\int_{\mathbb{R}^2}(\A\cdot\sss_5)\ML_1\CF_{Bi}\phi^2\ud\xx\Big)\\
&\qquad-\frac{1}{2}\int_{\mathbb{R}^2}\Big((\BOm\cdot\aaa_1)\ML_3\CF_{Bi}+(\BOm\cdot\aaa_2)\ML_2\CF_{Bi}+(\BOm\cdot\aaa_3)\ML_1\CF_{Bi}\Big)\phi^2\ud\xx.
\end{align*}
From the definition of $\mathfrak{F}$ in \eqref{external-force-F}, and \eqref{F-v-inner}, it holds that
\begin{align*}
\CA_2=-\int_{\mathbb{R}^2}\phi^2\big((v_i\partial_i\nn_1)\cdot\hh_1+(v_i\partial_i\nn_2)\cdot\hh_2+(v_i\partial_i\nn_3)\cdot\hh_3\big)\ud\xx.
\end{align*}
Recalling \eqref{energy-p-time},  we have
\begin{align}\label{Fp-L-2-phi}
\frac{\ud}{\ud t}\int_{\mathbb{R}^2}f_{Bi}(\Fp,\nabla\Fp)\phi^2\ud\xx
&=-\CA_2+\int_{\mathbb{R}^2}\Big(\frac{1}{2}\BOm\cdot\aaa_1+\frac{\eta_3}{\chi_3}\A\cdot\sss_3-\frac{1}{\chi_3}\ML_3\CF_{Bi}\Big)\ML_3\CF_{Bi}\phi^2\ud\xx\nonumber\\
&\quad+\int_{\mathbb{R}^2}\Big(\frac{1}{2}\BOm\cdot\aaa_2+\frac{\eta_2}{\chi_2}\A\cdot\sss_4-\frac{1}{\chi_2}\ML_2\CF_{Bi}\Big)\ML_2\CF_{Bi}\phi^2\ud\xx\nonumber\\
&\quad+\int_{\mathbb{R}^2}\Big(\frac{1}{2}\BOm\cdot\aaa_3+\frac{\eta_1}{\chi_1}\A\cdot\sss_5-\frac{1}{\chi_1}\ML_1\CF_{Bi}\Big)\ML_1\CF_{Bi}\phi^2\ud\xx.
\end{align}
Thus, combining \eqref{v-L-2-phi} and \eqref{Fp-L-2-phi}, we arrive at
\begin{align}\label{energy-L2-phi}
&\frac{\ud}{\ud t}\int_{\mathbb{R}^2}e(\Fp,\vv)\phi^2\ud\xx+\eta\int_{\mathbb{R}^2}|\nabla\vv|^2\phi^2\ud\xx+\sum^3_{k=1}\frac{1}{\chi_k}\int_{\mathbb{R}^2}|\ML_k\CF_{Bi}|^2\phi^2\ud\xx\nonumber\\
&\quad\leq\underbrace{\int_{\mathbb{R}^2}(|\vv|^2+2p)\phi\vv\cdot\nabla\phi\ud\xx}_{\CA_3}+\underbrace{\eta\int_{\mathbb{R}^2}|\vv|^2(|\nabla\phi|^2+\phi\Delta\phi)\ud\xx}_{\CA_4}\underbrace{-\int_{\mathbb{R}^2}\sigma_{ij}v_i\partial_j(\phi^2)\ud\xx}_{\CA_5}.
\end{align}
Making use of the equations (\ref{frame-equation-n1})--(\ref{frame-equation-n3}) and the definition of the stress $\sigma$, it follows that
\begin{align*}
\CA_5\leq &C\int_{\mathbb{R}^2}\Big(|\nabla\vv|+\sum^3_{k=1}|\ML_k\CF_{Bi}|\Big)|\vv||\phi||\nabla\phi|\ud\xx\\
\leq&\frac{\eta}{4}\int_{\mathbb{R}^2}|\nabla\vv|^2\phi^2\ud\xx+\sum^3_{k=1}\frac{1}{2\chi_k}\int_{\mathbb{R}^2}|\ML_k\CF_{Bi}|^2\phi^2\ud\xx+C\int_{\mathbb{R}^2}|\vv|^2|\nabla\phi^2|\ud\xx.
\end{align*}
Consequently, using the estimate of $\CA_5$, \eqref{energy-L2-phi} immediately becomes
\begin{align*}
&\frac{\ud}{\ud t}\int_{\mathbb{R}^2}e(\Fp,\vv)\phi^2\ud\xx+\frac{\eta}{2}\int_{\mathbb{R}^2}|\nabla\vv|^2\phi^2\ud\xx+\sum^3_{k=1}\frac{1}{2\chi_k}\int_{\mathbb{R}^2}|\ML_k\CF_{Bi}|^2\phi^2\ud\xx\nonumber\\
&\quad\leq C(\CA_3+\CA_4).
\end{align*}

It can be deduced from Proposition \ref{Fp-v-4-prop} that
\begin{align*}
\int^s_0|\CA_4|\ud t\leq C\frac{s}{R^2}E\big(\Fp^{(0)},\vv^{(0)}\big).
\end{align*}
Notice that from \eqref{frame-equation-v}, there has
\begin{align*}
\Delta p=\partial_i\partial_j\big(\sigma_{ij}-v_iv_j\big)+\partial_i\mathfrak{F}_i\quad \text{on}~\mathbb{R}^2\times[0,T].
\end{align*}
Applying the Calder$\acute{\rm o}$n--Zygmund estimate (see \cite{CKN}), Proposition \ref{energ-diss-prop} and Proposition \ref{Fp-v-4-prop}, we obtain
\begin{align*}
\int_{\mathbb{R}^2\times[0,s]}|p|^2\ud\xx\ud t\leq& C\int_{\mathbb{R}^2\times[0,s]}\big(|\nabla\Fp|^4+|\vv|^4+|\nabla\vv|^2+|\nabla^2\Fp|^2\big)\ud\xx\ud t\\
\leq &C\ve_1\Big(1+\frac{s}{R^2}\Big)E\big(\Fp^{(0)},\vv^{(0)}\big).
\end{align*}
By H$\ddot{\rm o}$lder inequality, Proposition \ref{energ-diss-prop} and Proposition \ref{Fp-v-4-prop}, it follows that
\begin{align*}
\int^s_0|\CA_3|\ud t\leq& C\bigg[\bigg(\int_{\mathbb{R}^2\times[0,s]}|\vv|^4\ud\xx\ud t\bigg)^{\frac{1}{2}}+\bigg(\int_{\mathbb{R}^2\times[0,s]}|p|^2\ud\xx\ud t\bigg)^{\frac{1}{2}}\bigg]\bigg(\int_{\mathbb{R}^2\times[0,s]}\frac{|\vv|^2}{R^2}\ud\xx\ud t\bigg)^{\frac{1}{2}}\\
\leq& C\ve^{\frac{1}{2}}_1\frac{s^{\frac{1}{2}}}{R}\Big(1+\frac{s}{R^2}\Big)^{\frac{1}{2}}E\big(\Fp^{(0)},\vv^{(0)}\big).
\end{align*}
The proposition follows from the above estimates.

\end{proof}

The following proposition will be devoted to studying the higher regularity of the solution $(\Fp,\vv)$ to the system (\ref{new-frame-equation-n1})--(\ref{imcompressible-v}).

\begin{proposition}\label{higher-regularity-l-prop}
Let $(\Fp,\vv)\in V(0,T)\times H(0,T)$ be a solution to the system {\rm (\ref{new-frame-equation-n1})--(\ref{imcompressible-v})} with initial data $(\Fp^{(0)},\vv^{(0)})\in H^1_{\Fp^{*}}\big(\mathbb{R}^2,SO(3)\big)\times L^2(\mathbb{R}^2,\mathbb{R}^2)$. Assume that there exists constants $\ve_1, R_0>0$ such that
\begin{align*}
 \mathop{\mathrm{ess\sup}}_{\xx\in\mathbb{R}^2,\tau\leq t\leq T}\int_{B_{R}(\xx)}\big(|\nabla \Fp(\cdot,t)|^2+|\vv(\cdot,t)|^2\big)\ud\xx<\ve_1.
\end{align*}
Then for all $t\in[\tau,T]$ with $\tau\in(0,T)$ and for all $l\geq 1$, it follows that
\begin{align}\label{higher-regularity-ineq}
&\int_{\mathbb{R}^2}\big(|\nabla^{l+1}\Fp(\cdot,t)|^2+|\nabla^l\vv(\cdot,t)|^2\big)\ud\xx+\int^t_{\tau}\int_{\mathbb{R}^2}\big(|\nabla^{l+2}\Fp(\cdot,s)|^2+|\nabla^{l+1}\vv(\cdot,s)|^2\big)\ud\xx\ud s\nonumber\\
&\quad\leq C\Big(l,\ve_1,E_0,\tau,T,\frac{T}{R^2}\Big),
\end{align}
where $E_0=E(\Fp^{(0)},\vv^{(0)})$.
Moreover, the solution $(\Fp,\vv)$ is regular for all $t\in(0,T)$.
\end{proposition}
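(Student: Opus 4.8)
The plan is to argue by induction on $l$, using a bootstrap that parallels the higher-order energy estimates developed in Section \ref{local-well-posed-blowup-section}, but now localized to the small-energy regime of dimension two where the Ladyzhenskaya-type inequality of Lemma \ref{L4-norm-lemma} and the $L^4$-bounds \eqref{nablaFp-v-L4} from Proposition \ref{Fp-v-4-prop} are available. The base case $l=1$ is the heart of the matter; the inductive step is then a routine (if lengthy) repetition with one extra derivative. First I would fix $\tau\in(0,T)$ and a smooth temporal cut-off $\theta(t)$ with $\theta\equiv 0$ for $t\le \tau/2$ and $\theta\equiv 1$ for $t\ge\tau$, so that the initial-time singularity is absorbed and the $t=\tau/2$ boundary terms are controlled by the already-established $L^2_tH^1_x$ bounds \eqref{nabla2Fp-v-L2} on $(\nabla^2\Fp,\nabla\vv)$.

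For the base case, I would test the $\nn_i$-equations \eqref{new-frame-equation-n1}--\eqref{new-frame-equation-n3} against $\theta^2\Delta^2\nn_i$ (equivalently, work with $\|\nabla^2\Fp\|_{L^2}^2$) and the velocity equation \eqref{frame-equation-v} against $\theta^2\Delta\vv$, exactly as in \eqref{Deltas-nablani-L2}--\eqref{energy-dot-nn-4term} and \eqref{v-highderivative-L2} but with $s$ replaced by the role of "one derivative". The cancellation structure is the same as in the derivation of \eqref{Fp-highderivative-L2+end} and \eqref{v-highderivative-L2+end}: the cross terms coupling $\A,\BOm$ to $\ML_k\CF_{Bi}$ at top order cancel between the frame block and the stress $\sigma$, while the dissipation $\sum_k\chi_k^{-1}\|\CH^{\Delta}_k\|_{L^2}^2$ (with $s=1$) together with $\eta\|\nabla^2\vv\|_{L^2}^2$ provides the good terms. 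The key point, as advertised in \eqref{Higher-order-key-estimate} and carried out in detail in \eqref{Deltas+1-Fp-L2-estimate}, is that the orthogonal decomposition \eqref{AB-orthogonal-decomposition} gives
\begin{align*}
\sum^3_{k=1}\frac{1}{\chi_k}\|\CH^{\Delta}_k\|^2_{L^2}\geq\frac{2\gamma^2}{\chi}\|\Delta^{2}\Fp\|^2_{L^2}+\text{L.O.T.},
\end{align*}
so that the genuinely third-order term $\|\nabla^3\Fp\|_{L^2}^2$ is recovered on the left. All the error terms produced by commutators are, via Lemma \ref{product-estimate-lemma}, bounded by products of the shape $\|\nabla\Fp\|_{L^\infty}^2$ or $\|\nabla\vv\|_{L^\infty}$ times $\|\nabla^2\Fp\|_{L^2}^2+\|\nabla^2\Fp\|_{L^2}\|\nabla^3\Fp\|_{L^2}+\|\nabla\vv\|_{L^2}\|\nabla^2\vv\|_{L^2}$; since in two dimensions $\|\nabla\Fp\|_{L^\infty}$ and $\|\nabla\vv\|_{L^\infty}$ are not directly controlled, I would instead interpolate, writing e.g. $\|\nabla\Fp\|_{L^4}^2\|\nabla^2\Fp\|_{L^2}\|\nabla^3\Fp\|_{L^2}$ and similar, using Gagliardo--Nirenberg to trade $L^\infty$ for $L^4$ at the cost of one more derivative in $L^2$ that is then absorbed by the good terms with a small constant $\delta$. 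After integrating in time on $[\tau/2,t]$, the right-hand side is controlled by $\int_0^T\|\nabla^2\Fp\|_{L^2}^2\,\|\nabla\Fp\|_{L^4}^2\,dt$ and $\int_0^T\|\nabla\vv\|_{L^2}^2\|\nabla\vv\|_{L^4}^2\,dt$-type integrals, which by \eqref{nabla2Fp-v-L2}, \eqref{nablaFp-v-L4} and Lemma \ref{L4-norm-lemma} are finite and bounded by $C(\ve_1,E_0,\tau,T,T/R^2)$; a Gronwall argument in the quantity $\int_{\mathbb{R}^2}(|\nabla^2\Fp|^2+|\nabla\vv|^2)\theta^2\,d\xx$ then yields \eqref{higher-regularity-ineq} for $l=1$.

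For the inductive step, assuming \eqref{higher-regularity-ineq} for $l-1$ (so that $(\nabla^l\Fp,\nabla^{l-1}\vv)\in L^\infty_tL^2_x$ and $(\nabla^{l+1}\Fp,\nabla^l\vv)\in L^2_tL^2_x$ on $[\tau',T]$ for every $\tau'>0$), I would repeat the testing against $\theta^2\Delta^{l+1}\nn_i$ and $\theta^2\Delta^l\vv$; all nonlinear error terms now involve at most $\lfloor(l+1)/2\rfloor+1$ derivatives on the "low" factors, which by the inductive hypothesis and the Gagliardo--Nirenberg interpolation of Lemma \ref{Gagliardo-Sobolev inequality} land in $L^\infty$ or in $L^2_tL^2_x$, and the top-order term $\|\nabla^{l+2}\Fp\|_{L^2}^2$ is again recovered from the dissipation by the orthogonal-decomposition estimate as in \eqref{Deltas+1-Fp-L2-estimate}. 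This gives \eqref{higher-regularity-ineq} for general $l$. Finally, the regularity of $(\Fp,\vv)$ on $(0,T)$: since \eqref{higher-regularity-ineq} holds for every $l$ and every $\tau\in(0,T)$, one has $(\nabla\Fp,\vv)\in L^\infty_{loc}((0,T);H^l)$ for all $l$, hence $(\Fp,\vv)\in C^\infty(\mathbb{R}^2\times(0,T))$ by Sobolev embedding and the equation for $\partial_t$; the pressure is then recovered smoothly from the elliptic equation $\Delta p=\partial_i\partial_j(\sigma_{ij}-v_iv_j)+\partial_i\mathfrak{F}_i$.

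The main obstacle I anticipate is precisely the two-dimensional loss of the $L^\infty$-control on $\nabla\Fp$ and $\nabla\vv$ that was freely used in Section \ref{local-well-posed-blowup-section}: every commutator estimate from Lemma \ref{product-estimate-lemma} must be re-done so that the "low" factor appears in $L^4$ (paid for by Lemma \ref{L4-norm-lemma} and Proposition \ref{Fp-v-4-prop}) rather than $L^\infty$, and the borderline terms of the form $\|\nabla^2\Fp\|_{L^2}\|\nabla^3\Fp\|_{L^2}$ must be split by Young's inequality with a small coefficient in front of $\|\nabla^3\Fp\|_{L^2}^2$ and absorbed — which is only possible because the orthogonal-decomposition lower bound \eqref{Higher-order-key-estimate} makes $\|\nabla^{s+1}\Fp\|_{L^2}^2$ a genuine good term with a fixed positive coefficient $2\gamma^2/\chi$. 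Keeping careful track of the $\delta$'s so that the absorbed terms do not overwhelm the dissipation is the delicate bookkeeping, but it is structurally identical to the computations already carried out in \eqref{Wtilde-CE-energy-higher-L2}--\eqref{Deltas+1-Fp-L2-estimate}.
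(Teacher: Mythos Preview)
Your proposal is correct and follows essentially the same route as the paper: induction on $l$, with the base case handled by the $L^4$-machinery of Lemma \ref{L4-norm-lemma} and Proposition \ref{Fp-v-4-prop} in place of $L^\infty$-bounds, the top-order dissipation recovered via the orthogonal decomposition \eqref{AB-orthogonal-decomposition} exactly as in \eqref{higher-deriv-estimate}, and Gronwall to close. Two minor implementation differences: the paper avoids your temporal cut-off $\theta$ by instead selecting (via the $L^2_t$-bound \eqref{nabla2Fp-v-L2}) a good time $\tau'\in(0,\tau)$ at which $\int(|\nabla^2\Fp|^2+|\nabla\vv|^2)(\cdot,\tau')\,\ud\xx$ is finite and using it as initial data; and the paper treats $l=1,2,3$ as separate base cases before the clean inductive step, since only after $l=3$ does one have the Sobolev embedding $|\nabla\Fp|+|\nabla^2\Fp|+|\vv|+|\nabla\vv|\in L^\infty_{t,\xx}$ that makes the higher-$l$ commutator estimates straightforward---your interpolation-based inductive step from $l=1$ would need the same $L^4$-care as the base case for the first couple of steps.
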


\begin{proof}
The proof is an induction on $l$, which will be divided into two steps.

{\bf Step 1}. For $l=1,2,3$,
we only provide here the arguments of (\ref{higher-regularity-ineq}) for the case $l=1$. We
relegate the proof of the case $l=2$ in (\ref{higher-regularity-ineq}) to the appendix so as not to distract
from the main body of this paper. The case of $l=3$ will be omitted due to the high similarity with the proof of the case $l=2$.

Multiplying the equation (\ref{frame-equation-v}) by $\Delta\vv$ and integrating by parts, we obtain
\begin{align}\label{Delta-v-L2}
&\frac{1}{2}\frac{\ud}{\ud t}\int_{\mathbb{R}^2}|\nabla\vv|^2\ud\xx+\eta\int_{\mathbb{R}^2}|\Delta\vv|^2\ud\xx\nonumber\\
&\quad=\langle\vv\cdot\nabla\vv,\Delta\vv\rangle-\langle\nabla\cdot\sigma,\Delta\vv\rangle-\langle\mathfrak{F},\Delta\vv\rangle\nonumber\\
&\quad\leq\frac{\eta}{4}\int_{\mathbb{R}^2}|\Delta\vv|^2\ud\xx+C\int_{\mathbb{R}^2}|\vv\cdot\nabla\vv|^2\ud\xx+C\int_{\mathbb{R}^2}\big(|\nabla^2\Fp|^2+|\nabla\Fp|^4\big)|\nabla\Fp|^2\ud\xx\nonumber\\
&\qquad-\langle\nabla\cdot\sigma,\Delta\vv\rangle,
\end{align}
where
\begin{align*}
|\nabla\Fp|^2=\sum^3_{i=1}|\nabla\nn_i|^2,\quad|\nabla\Fp|^4=\sum^3_{i=1}|\nabla\nn_i|^4,\quad |\nabla^2\Fp|^2=\sum^3_{i=1}|\nabla^2\nn_i|^2.
\end{align*}
Using the definition of the stress $\sigma$ and the equations (\ref{frame-equation-n1})--(\ref{frame-equation-n3}), it gives
\begin{align*}
 &-\langle\nabla\cdot\sigma,\Delta\vv\rangle=-\int_{\mathbb{R}^2}\partial_k\sigma\cdot\partial_k(\A+\BOm)\ud\xx\\
 &\quad\leq-\Big(\frac{\eta_3}{\chi_3}\int_{\mathbb{R}^2}(\partial_k\A\cdot\sss_3)\CH^{\nabla}_3\ud\xx+\frac{\eta_2}{\chi_2}\int_{\mathbb{R}^2}(\partial_k\A\cdot\sss_4)\CH^{\nabla}_2\ud\xx+\frac{\eta_1}{\chi_1}\int_{\mathbb{R}^2}(\partial_k\A\cdot\sss_5)\CH^{\nabla}_1\ud\xx\Big)\\
&\qquad-\frac{1}{2}\int_{\mathbb{R}^2}\Big((\partial_k\BOm\cdot\aaa_1)\CH^{\nabla}_3+(\partial_k\BOm\cdot\aaa_2)\CH^{\nabla}_2+(\partial_k\BOm\cdot\aaa_3)\CH^{\nabla}_1\Big)\ud\xx\\
&\qquad+C\int_{\mathbb{R}^2}|\nabla\Fp||\nabla\vv||\Delta\vv|\ud\xx+C\int_{\mathbb{R}^2}(|\nabla^2\Fp|+|\nabla\Fp|^2)|\nabla\Fp||\Delta\vv|\ud\xx,
\end{align*}
where
\begin{align*}
\CH^{\nabla}_1=\nn_2\cdot\partial_k\hh_3-\nn_3\cdot\partial_k\hh_2,\quad
\CH^{\nabla}_2=\nn_3\cdot\partial_k\hh_1-\nn_1\cdot\partial_k\hh_3,\quad
\CH^{\nabla}_3=\nn_1\cdot\partial_k\hh_2-\nn_2\cdot\partial_k\hh_1.
\end{align*}
Then, it follows from (\ref{Delta-v-L2}) and the above estimates that
\begin{align}\label{Delta-v-L2-Final}
&\frac{1}{2}\frac{\ud}{\ud t}\int_{\mathbb{R}^2}|\nabla\vv|^2\ud\xx+\frac{\eta}{2}\int_{\mathbb{R}^2}|\Delta\vv|^2\ud\xx\nonumber\\
&\quad\leq-\Big(\frac{\eta_3}{\chi_3}\int_{\mathbb{R}^2}(\partial_k\A\cdot\sss_3)\CH^{\nabla}_3\ud\xx+\frac{\eta_2}{\chi_2}\int_{\mathbb{R}^2}(\partial_k\A\cdot\sss_4)\CH^{\nabla}_2\ud\xx+\frac{\eta_1}{\chi_1}\int_{\mathbb{R}^2}(\partial_k\A\cdot\sss_5)\CH^{\nabla}_1\ud\xx\Big)\nonumber\\
&\qquad-\frac{1}{2}\int_{\mathbb{R}^2}\Big((\partial_k\BOm\cdot\aaa_1)\CH^{\nabla}_3+(\partial_k\BOm\cdot\aaa_2)\CH^{\nabla}_2+(\partial_k\BOm\cdot\aaa_3)\CH^{\nabla}_1\Big)\ud\xx\nonumber\\
&\qquad+C\int_{\mathbb{R}^2}(|\vv|^2+|\nabla\Fp|^2)(|\nabla^2\Fp|^2+|\nabla\Fp|^4+|\nabla\vv|^2)\ud\xx.
\end{align}

On the other hand, we first denote
\begin{align*}
\CF^{\nabla}_{Bi}(\Fp)\eqdefa&\frac{1}{2}\sum^3_{i=1}\Big(\gamma_i\|\Delta\nn_i\|^2_{L^2}+k_i\|\nabla{\rm div}\nn_i\|^2_{L^2}+\sum^3_{j=1}k_{ji}\|\nabla(\nn_j\cdot\nabla\times\nn_i)\|^2_{L^2}\Big).
\end{align*}
Using the definitions of $\hh_i(i=1,2,3)$ and the equations (\ref{new-frame-equation-n1})--(\ref{new-frame-equation-n3}), we have the following facts:
\begin{align*}
&-\nn_i\cdot\Delta\nn_i=|\nabla\nn_i|^2,\quad |\hh_i|\leq C(|\nabla\Fp|^2+|\nabla^2\Fp|),\\
&|\nabla\hh_i|\leq C(|\nabla\Fp|^3+|\nabla^2\Fp||\nabla\Fp|+|\nabla^3\Fp|),\\
&|\nabla\partial_t\nn_i|\leq C\Big(|\Delta\vv|+|\nabla\vv||\nabla\Fp|+\sum^3_{i=1}|\hh_i||\nabla\Fp|+\sum^3_{i=1}|\CH^{\nabla}_i|\Big).
\end{align*}
Differentiating the equations (\ref{new-frame-equation-n1})--(\ref{new-frame-equation-n3}) in $x_{\alpha}$, respectively, multiplying its by $-\partial_{\alpha}\hh_i(i=1,2,3)$ and then integrating by parts, one can obtain
 \begin{align}
&\frac{\ud}{\ud t}\CF^{\nabla}_{Bi}(\Fp)-\sum^3_{i=1}\int_{\mathbb{R}^2}[(\partial_{\alpha}\vv\cdot\nabla)\nn_i+(\vv\cdot\nabla)\partial_{\alpha}\nn_i]\cdot\partial_{\alpha}\hh_i\ud\xx\nonumber\\
&\quad\leq\int_{\mathbb{R}^2}\Big(\frac{1}{2}\partial_{\alpha}\BOm\cdot\aaa_1+\frac{\eta_3}{\chi_3}\partial_{\alpha}\A\cdot\sss_3-\frac{1}{\chi_3}\CH^{\nabla}_3\Big)\CH^{\nabla}_3\ud\xx\nonumber\\
&\qquad+\int_{\mathbb{R}^2}\Big(\frac{1}{2}\partial_{\alpha}\BOm\cdot\aaa_2+\frac{\eta_2}{\chi_2}\partial_{\alpha}\A\cdot\sss_4-\frac{1}{\chi_2}\CH^{\nabla}_2\Big)\CH^{\nabla}_2\ud\xx\nonumber\\
&\qquad+\int_{\mathbb{R}^2}\Big(\frac{1}{2}\partial_{\alpha}\BOm\cdot\aaa_3+\frac{\eta_1}{\chi_1}\partial_{\alpha}\A\cdot\sss_5-\frac{1}{\chi_1}\CH^{\nabla}_1\Big)\CH^{\nabla}_1\ud\xx\nonumber\\
&\qquad+\delta\sum^3_{i=1}\int_{\mathbb{R}^2}|\nabla\partial_t\nn_i|^2\ud\xx+C_{\delta}\int_{\mathbb{R}^2}|\nabla\Fp|^2(|\nabla^2\Fp|^2+|\nabla\Fp|^4)\ud\xx\nonumber\\
&\qquad+C\sum^3_{i=1}\int_{\mathbb{R}^2}|\nabla\vv||\nabla\Fp|||\nabla\hh_i|\ud\xx,
 \end{align}
where $\delta>0$ is a small parameter to be determined later.
Similar to the estimate of (\ref{higher-deriv-estimate}), we have
\begin{align}\label{higher-deriv-estimate-CHn}
-\sum^3_{i=1}\frac{1}{\chi_i}\int_{\mathbb{R}^2}|\CH^{\nabla}_i|^2\ud\xx\leq&-\frac{2\gamma^2}{\chi}\int_{\mathbb{R}^2}|\nabla^3\Fp|^2\ud\xx+C\int_{\mathbb{R}^2}|\nabla\Fp|^2(|\nabla^2\Fp|^2+|\nabla\Fp|^4)\ud\xx.
\end{align}
Thus, from (\ref{Delta-v-L2-Final})--(\ref{higher-deriv-estimate-CHn}), choosing the appropriate small parameter $\delta>0$, and using the Gagliardo--Nirenberg--Sobolev inequality, we obtain
\begin{align*}
&\frac{\ud}{\ud t}\int_{\mathbb{R}^2}\Big(\frac{1}{2}|\nabla\vv|^2+\CF^{\nabla}_{Bi}(\Fp)\Big)\ud\xx+\int_{\mathbb{R}^2}\Big(\frac{\eta}{4}|\Delta\vv|^2+\frac{\gamma^2}{\chi}|\nabla^3\Fp|^2\Big)\ud\xx\\
&\quad\leq C\int_{\mathbb{R}^2}(|\vv|^2+|\nabla\Fp|^2)(|\nabla\vv|^2+|\nabla^2\Fp|^2)\ud\xx\\
&\quad \leq C\Big(\int_{\mathbb{R}^2}(|\vv|^4+|\nabla\Fp|^4)\ud\xx\Big)^{\frac{1}{2}}\Big(\int_{\mathbb{R}^2}(|\nabla\vv|^4+|\nabla^2\Fp|^4)\ud\xx\Big)^{\frac{1}{2}}\\
&\quad\leq\min\Big\{\frac{\eta}{8},\frac{\gamma^2}{2\chi}\Big\}\int_{\mathbb{R}^2}(|\Delta\vv|^2+|\nabla^3\Fp|^2)\ud\xx\\
&\qquad+C\Big(\int_{\mathbb{R}^2}(|\vv|^4+|\nabla\Fp|^4)\ud\xx\Big)\Big(\int_{\mathbb{R}^2}(|\nabla\vv|^2+|\nabla^2\Fp|^2)\ud\xx\Big).
\end{align*}
It can be seen from (\ref{nabla2Fp-v-L2}) that for $\tau\in (0,T)$, there exists $\tau'\in (0,\tau)$, such that
\begin{align*}
\int_{\mathbb{R}^2}(|\nabla^2\Fp|^2+|\nabla\vv|^2)(\cdot,\tau')\ud\xx<C\Big(\tau,E_0,\frac{T}{R^2}\Big).
\end{align*}
Then, by Gronwall's inequality and Proposition \ref{Fp-v-4-prop}, it holds that
\begin{align}\label{higher-regular-l=1}
&\int_{\mathbb{R}^2}(|\nabla^2\Fp|^2+|\nabla\vv|^2)(\cdot,t)\ud\xx\nonumber\\
&\quad+\int^t_{\tau}\int_{\mathbb{R}^2}(|\nabla^3\Fp|^2+|\Delta\vv|^2)(\cdot,s)\ud\xx\ud s\leq C\Big(\ve_1,E_0,\tau,T,\frac{T}{R^2}\Big),
\end{align}
for all $t\in[\tau,T]$ with $\tau\in (0,T)$.

{\bf Step 2}. Assume now the estimate (\ref{higher-regularity-ineq}) is valid for some nonnegative integer $l_0\geq3$, and we prove that the case $l=l_0+1$ is still valid. Further, we only prove the case that $l_0$ is odd, that is, for $l=1,\cdots,2k-1$ and $k\geq2$, (\ref{higher-regularity-ineq}) holds. In other words, we prove the case $l=2k$. We also omit the case that $l_0$ is even, because the proof can be obtained by the same way.

By the assumptions and Sobolev's  embedding inequality, we have
\begin{align}
&|\nabla\Fp|+|\nabla^2\Fp|+|\vv|+|\nabla\vv|\leq C\Big(\ve_1,E_0,\tau,T,\frac{T}{R^2}\Big),\quad \text{in}~\mathbb{R}^2\times(\tau,T),\label{low-ineq-estimate-1}\\
&\int_{\mathbb{R}^2}(|\nabla^{2k}\Fp|^2+|\nabla^{2k-1}\vv|)(\cdot,t)\ud\xx\nonumber\\
&\quad+\int^t_{\tau}\int_{\mathbb{R}^2}(|\nabla^{2k+1}\Fp|^2+|\nabla^{2k}\vv|^2)(\cdot,s)\ud\xx\ud s+C\Big(k,\ve_1,E_0,\tau,T,\frac{T}{R^2}\Big),~~\forall t\in(\tau, T), \label{high-ineq-estimate-2}
\end{align}
which implies the following estimate:
\begin{align}\label{mid-ineq-estimate-3}
|\nabla^{2k-2}\Fp|+|\nabla^{2k-3}\vv|\leq C\Big(k,\ve_1,E_0,\tau,T,\frac{T}{R^2}\Big),\quad \text{in}~\mathbb{R}^2\times(\tau,T).
\end{align}
Similar to the process of Step 1, multiplying the equation (\ref{frame-equation-v}) by $\Delta^{2k}\vv$ and integrating by parts, we deduce that
\begin{align}\label{Delta-k-v-L2}
&\frac{1}{2}\frac{\ud}{\ud t}\int_{\mathbb{R}^2}|\nabla^{2k}\vv|^2\ud\xx+\eta\int_{\mathbb{R}^2}|\nabla\Delta^k\vv|^2\ud\xx\nonumber\\
&\quad=\langle\vv\cdot\nabla\vv,\Delta^{2k}\vv\rangle-\langle\nabla\cdot\sigma,\Delta^{2k}\vv\rangle-\langle\mathfrak{F},\Delta^{2k}\vv\rangle\nonumber\\
&\quad\leq\frac{\eta}{4}\int_{\mathbb{R}^2}|\nabla\Delta^k\vv|^2\ud\xx+C\int_{\mathbb{R}^2}|\nabla^{2k-1}(\vv\cdot\nabla\vv)|^2\ud\xx\nonumber\\
&\qquad+\int_{\mathbb{R}^2}(\nabla\cdot\sigma)\cdot\Delta^{2k}\vv\ud\xx+C\int_{\mathbb{R}^2}|\nabla^{2k-1}\mathfrak{F}|^2\ud\xx.
\end{align}
It follows from the definition of $\mathfrak{F}$ and (\ref{low-ineq-estimate-1})--(\ref{mid-ineq-estimate-3}) that
\begin{align*}
\int_{\mathbb{R}^2}|\nabla^{2k-1}\mathfrak{F}|^2\ud\xx\leq C\Big(k,\ve_1,E_0,\tau,T,\frac{T}{R^2}\Big)\Big(\int_{\mathbb{R}^2}|\nabla^{2k+1}\Fp|^2\ud\xx+1\Big).
\end{align*}
Similarly, we have
\begin{align*}
\int_{\mathbb{R}^2}|\nabla^{2k-1}(\vv\cdot\nabla\vv)|^2\ud\xx\leq C\Big(k,\ve_1,E_0,\tau,T,\frac{T}{R^2}\Big)\Big(\int_{\mathbb{R}^2}|\nabla^{2k}\vv|^2\ud\xx+1\Big).
\end{align*}
Applying the definition of $\sigma$ and (\ref{low-ineq-estimate-1})--(\ref{mid-ineq-estimate-3}), we can obtain from the equations (\ref{frame-equation-n1})--(\ref{frame-equation-n3}) that
\begin{align*}
&\int_{\mathbb{R}^2}(\nabla\cdot\sigma)\cdot\Delta^{2k}\vv\ud\xx
=-\int_{\mathbb{R}^2}\sigma\cdot\Delta^{2k}(\A+\BOm)\ud\xx\\
&\quad\leq-\Big(\beta_1\|\Delta^k\A\cdot\sss_1\|^2_{L^2}+2\beta_0\int_{\mathbb{R}^2}(\Delta^k\A\cdot\sss_1)(\Delta^k\A\cdot\sss_2)\ud\xx+\beta_2\|\Delta^k\A\cdot\sss_2\|^2_{L^2}\Big)\\
&\qquad-\Big(\beta_3-\frac{\eta^2_3}{\chi_3}\Big)\|\Delta^k\A\cdot\sss_3\|^2_{L^2}-\Big(\beta_4-\frac{\eta^2_2}{\chi_2}\Big)\|\Delta^k\A\cdot\sss_4\|^2_{L^2}-\Big(\beta_5-\frac{\eta^2_1}{\chi_1}\Big)\|\Delta^k\A\cdot\sss_5\|^2_{L^2}\\
&\qquad-\Big(\frac{\eta_3}{\chi_3}\int_{\mathbb{R}^2}(\Delta^k\A\cdot\sss_3)\CH^{\Delta^{k}}_3\ud\xx+\frac{\eta_2}{\chi_2}\int_{\mathbb{R}^2}(\Delta^k\A\cdot\sss_4)\CH^{\Delta^k}_2\ud\xx+\frac{\eta_1}{\chi_1}\int_{\mathbb{R}^2}(\Delta^k\A\cdot\sss_5)\CH^{\Delta^k}_1\ud\xx\Big)\\
&\qquad-\frac{1}{2}\int_{\mathbb{R}^2}\Big((\Delta^k\BOm\cdot\aaa_1)\CH^{\Delta^k}_3+(\Delta^k\BOm\cdot\aaa_2)\CH^{\Delta^k}_2+(\Delta^k\BOm\cdot\aaa_3)\CH^{\Delta^k}_1\Big)\ud\xx\\
&\qquad+\delta\int_{\mathbb{R}^2}|\nabla^{2k+1}\vv|^2\ud\xx+C\Big(\delta,k,\ve_1,E_0,\tau,T,\frac{T}{R^2}\Big)\Big(\int_{\mathbb{R}^2}(|\nabla^{2k}\vv|^2+|\nabla^{2k+1}\Fp|^2)\ud\xx+1\Big),
\end{align*}
where the small parameter $\delta>0$ can be suitably selected, and
\begin{align*}
&\CH^{\Delta^k}_1=\nn_2\cdot\Delta^k\hh_3-\nn_3\cdot\Delta^k\hh_2,\quad
\CH^{\Delta^k}_2=\nn_3\cdot\Delta^k\hh_1-\nn_1\cdot\Delta^k\hh_3,\\
&
\CH^{\Delta^k}_3=\nn_1\cdot\Delta^k\hh_2-\nn_2\cdot\Delta^k\hh_1.
\end{align*}
Consequently, combining (\ref{Delta-k-v-L2}) with the above estimates leads to
\begin{align}\label{v-energ-estimate-Delta-k}
&\frac{1}{2}\frac{\ud}{\ud t}\int_{\mathbb{R}^2}|\nabla^{2k}\vv|^2\ud\xx+\frac{\eta}{2}\int_{\mathbb{R}^2}|\nabla\Delta^k\vv|^2\ud\xx\nonumber\\
&\quad\leq-\Big(\frac{\eta_3}{\chi_3}\int_{\mathbb{R}^2}(\Delta^k\A\cdot\sss_3)\CH^{\Delta^{k}}_3\ud\xx+\frac{\eta_2}{\chi_2}\int_{\mathbb{R}^2}(\Delta^k\A\cdot\sss_4)\CH^{\Delta^k}_2\ud\xx+\frac{\eta_1}{\chi_1}\int_{\mathbb{R}^2}(\Delta^k\A\cdot\sss_5)\CH^{\Delta^k}_1\ud\xx\Big)\nonumber\\
&\qquad-\frac{1}{2}\int_{\mathbb{R}^2}\Big((\Delta^k\BOm\cdot\aaa_1)\CH^{\Delta^k}_3+(\Delta^k\BOm\cdot\aaa_2)\CH^{\Delta^k}_2+(\Delta^k\BOm\cdot\aaa_3)\CH^{\Delta^k}_1\Big)\ud\xx\nonumber\\
&\qquad+\delta\int_{\mathbb{R}^2}|\nabla^{2k+1}\vv|^2\ud\xx+C\Big(\delta,k,\ve_1,E_0,\tau,T,\frac{T}{R^2}\Big)\Big(\int_{\mathbb{R}^2}(|\nabla^{2k}\vv|^2+|\nabla^{2k+1}\Fp|^2)\ud\xx+1\Big).
\end{align}

We define the following energy functional:
\begin{align*}
\CF^{\Delta^k}_{Bi}(\Fp)\eqdefa&\frac{1}{2}\sum^3_{i=1}\Big(\gamma_i\|\Delta^k\nabla\nn_i\|^2_{L^2}+k_i\|\Delta^k{\rm div}\nn_i\|^2_{L^2}+\sum^3_{j=1}k_{ji}\|\Delta^k(\nn_j\cdot\nabla\times\nn_i)\|^2_{L^2}\Big).
\end{align*}
Acting the differential operator $\partial_{\beta}$ on (\ref{new-frame-equation-n1})--(\ref{new-frame-equation-n3}), multiplying these equations by
$\partial_{\beta}\Delta^{2k-1}\hh_i(i=1,2,3)$, respectively, and then integrating by parts, we obtain from  (\ref{low-ineq-estimate-1})--(\ref{mid-ineq-estimate-3}) that
\begin{align}\label{Fp-energ-estimate-Delta-k}
&\frac{\ud}{\ud t}\CF^{\Delta^k}_{Bi}(\Fp)+\sum^3_{i=1}\int_{\mathbb{R}^2}\big[(\partial_{\beta}\vv\cdot\nabla)\nn_i+(\vv\cdot\nabla)\partial_{\beta}\nn_i\big]\cdot\partial_{\beta}\Delta^{2k-1}\hh_i\ud\xx\nonumber\\
&\quad\leq\int_{\mathbb{R}^2}\Big(\frac{1}{2}\Delta^k\BOm\cdot\aaa_1+\frac{\eta_3}{\chi_3}\Delta^k\A\cdot\sss_3-\frac{1}{\chi_3}\CH^{\Delta^k}_3\Big)\CH^{\Delta^k}_3\ud\xx\nonumber\\
&\qquad+\int_{\mathbb{R}^2}\Big(\frac{1}{2}\Delta^k\BOm\cdot\aaa_2+\frac{\eta_2}{\chi_2}\Delta^k\A\cdot\sss_4-\frac{1}{\chi_2}\CH^{\Delta^k}_2\Big)\CH^{\Delta^k}_2\ud\xx\nonumber\\
&\qquad+\int_{\mathbb{R}^2}\Big(\frac{1}{2}\Delta^k\BOm\cdot\aaa_3+\frac{\eta_1}{\chi_1}\Delta^k\A\cdot\sss_5-\frac{1}{\chi_1}\CH^{\Delta^k}_1\Big)\CH^{\Delta^k}_1\ud\xx\nonumber\\
&\qquad+\delta\sum^3_{i=1}\int_{\mathbb{R}^2}|\nabla^{2k}\partial_t\nn_i|^2\ud\xx+\delta\int_{\mathbb{R}^2}|\nabla^{2k+2}\Fp|^2\ud\xx\nonumber\\
&\qquad +C\Big(\delta,k,\ve_1,E_0,\tau,T,\frac{T}{R^2}\Big)\Big(\int_{\mathbb{R}^2}|\nabla^{2k}\vv|^2\ud\xx+\int_{\mathbb{R}^2}|\nabla^{2k+1}\Fp|^2\ud\xx+1\Big).
\end{align}
By a direct calculation, we have
\begin{align}\label{diver-higher-2k}
&\bigg|\sum^3_{i=1}\int_{\mathbb{R}^2}\big[(\partial_{\beta}\vv\cdot\nabla)\nn_i+(\vv\cdot\nabla)\partial_{\beta}\nn_i\big]\cdot\partial_{\beta}\Delta^{2k-1}\hh_i\ud\xx\bigg|\nonumber\\
&\quad\leq\delta\int_{\mathbb{R}^2}|\nabla^{2k+2}\Fp|^2\ud\xx\nonumber\\
&\qquad +C\Big(\delta,k,\ve_1,E_0,\tau,T,\frac{T}{R^2}\Big)\Big(\int_{\mathbb{R}^2}|\nabla^{2k}\vv|^2\ud\xx+\int_{\mathbb{R}^2}|\nabla^{2k+1}\Fp|^2\ud\xx+1\Big).
\end{align}
Moreover, repeating similar arguments in (\ref{higher-deriv-estimate}) of Proposition \ref{Fp-v-4-prop} yields
\begin{align}\label{Delta-k-deriv-estimate-CHn}
-\sum^3_{i=1}\frac{1}{\chi_i}\int_{\mathbb{R}^2}|\CH^{\Delta^k}_i|^2\ud\xx\leq&-\frac{2\gamma^2}{\chi}\int_{\mathbb{R}^2}|\nabla^{2k+2}\Fp|^2\ud\xx\nonumber\\
&+C\Big(k,\ve_1,E_0,\tau,T,\frac{T}{R^2}\Big)\Big(\int_{\mathbb{R}^2}|\nabla^{2k+1}\Fp|^2\ud\xx+1\Big).
\end{align}
Thus, combining (\ref{v-energ-estimate-Delta-k}) with (\ref{Fp-energ-estimate-Delta-k})-(\ref{Delta-k-deriv-estimate-CHn}), and choosing $\delta>0$ small enough, we finally get
\begin{align*}
&\frac{\ud}{\ud t}\int_{\mathbb{R}^2}\Big(\frac{1}{2}|\nabla^{2k}\vv|^2+\CF^{\Delta^k}_{Bi}(\Fp)\Big)\ud\xx+\int_{\mathbb{R}^2}\Big(\frac{\eta}{4}|\nabla^{2k+1}\vv|^2+\frac{\gamma^2}{\chi}|\nabla^{2k+2}\Fp|^2\Big)\ud\xx\\
&\quad\leq C\Big(k,\ve_1,E_0,\tau,T,\frac{T}{R^2}\Big)\Big(\int_{\mathbb{R}^2}|\nabla^{2k}\vv|^2\ud\xx+\int_{\mathbb{R}^2}|\nabla^{2k+1}\Fp|^2\ud\xx+1\Big),
\end{align*}
which implies that (\ref{higher-regularity-ineq}) holds for $l=2k$ by Gronwall's inequality.

Lastly, we claim that the solution $(\Fp,\vv)$ is regular for all $t\in(0,T)$. In fact, for any $l>1$ and $(\xx,t)\in\mathbb{R}^2\times(0,T)$, from the estimates in Step 1 and Step 2, and the arbitrariness of $\tau\in(0,T)$, it follows that
\begin{align*}
(|\nabla^l\Fp|+|\nabla^l\vv|)(\xx,t)<\infty,\quad \Fp=(\nn_1,\nn_2,\nn_3)\in SO(3),
\end{align*}
which implies that the solution $(\Fp,\vv)$ is spatially regular. Using the frame system (\ref{new-frame-equation-n1})--(\ref{imcompressible-v}), we know that $(\partial_t\Fp,\partial_t\vv)$ is also spatially regular, and differentiating to the time $t$, thus obtain that the solution $(\Fp,\vv)$ is regular in $\mathbb{R}^2\times(0,T)$.

\end{proof}

\subsection{Global existence and proof of Theorem \ref{global-posedness-theorem}}

We are now in a position to prove the existence of global weak solutions to the frame system (\ref{new-frame-equation-n1})--(\ref{imcompressible-v}) in dimension two and to complete the proof of Theorem \ref{global-posedness-theorem}. The main strategy of the arguments will follow
several previous works \cite{Struwe,WW,Lin2}. We only present the sketch for the proof here.

For simplicity, we denote
\begin{align*}
E(t)\eqdefa E(\Fp,\vv)(t)=\int_{\mathbb{R}^2}e(\Fp,\vv)(\cdot,t)\ud\xx.
\end{align*}
According to the density properties of Sobolev spaces, for any data $\Fp^{(0)}\in H^1_{\Fp^*}\big(\mathbb{R}^2,SO(3)\big)$, there exists an approximate sequence $\{\Fp^{(0)}_m\}_{m\geq1}\in C^{\infty}\big(\mathbb{R}^2,SO(3)\big)$ with $\Fp^{(0)}_m=\Fp^{*}$ at infinity, such that
\begin{align*}
\lim_{m\rightarrow\infty}\|\Fp^{(0)}_m-\Fp^{(0)}\|_{H^1_{\Fp^*}}=0,
\end{align*}
where $\Fp^*=(\nn^*_1,\nn^*_2,\nn^*_3)\in SO(3)$ is a constant orthonormal frame. Here, we will assume  $\Fp^{(0)}_m\in H^4_{\Fp^*}\big(\mathbb{R}^2,SO(3)\big)$ (see \cite{SU,WW} for details).
Likewise, for any data $\vv^{(0)}\in L^2(\mathbb{R}^2,\mathbb{R}^2)$, there exists an approximate sequence $\{\vv^{(0)}_m\}_{m\geq1}\in C^{\infty}_c(\mathbb{R}^2,\mathbb{R}^2)$ such that
\begin{align*}
\lim_{m\rightarrow\infty}\|\vv^{(0)}_m-\vv^{(0)}\|_{L^2}=0.
\end{align*}

The absolute continuity of the integral $\int_{\mathbb{R}^2}(|\nabla\Fp^{(0)}|^2+|\vv^{(0)}|^2)\ud\xx$ implies that for any $\ve_1>0$, there exists $R_0>0$ such that
\begin{align*}
    \sup_{\xx\in\mathbb{R}^2}\int_{B_{R_0}(\xx)}(|\nabla\Fp^{(0)}|^2+|\vv^{(0)}|^2)\ud\xx\leq \ve_1.
\end{align*}
Since $(\Fp^{(0)}_m,\vv^{(0)}_m)$ strongly converges to $(\Fp^{(0)},\vv^{(0)})$ in $H^1_{\Fp^*}\times L^2$, there holds
\begin{align}\label{strong-con-resut}
\sup_{\xx\in\mathbb{R}^2}\int_{B_{R_0}(\xx)}(|\nabla\Fp^{(0)}_m|^2+|\vv^{(0)}_m|^2)\ud\xx\leq 2\ve_1,\quad \forall m\gg1.
\end{align}
We here assume (\ref{strong-con-resut}) holds for all $m\geq1$.

Theorem \ref{local-posedness-theorem} tells us that if  $(\nabla\Fp^{(0)}_m,\vv^{(0)}_m)\in H^4(\mathbb{R}^2)\times H^4(\mathbb{R}^2)
$ is the given initial data satisfying $\nabla\cdot\vv^{(0)}_m=0$, then there exists a time $T^m>0$ and a smooth solution $(\nabla\Fp_m,\vv_m)$
with the pressure $p_m$
such that
\begin{align*}
\nabla\Fp_m\in C\big([0,T^m];H^{4}(\mathbb{R}^2)\big),\quad \vv_m\in C\big([0,T^m];H^{4}(\mathbb{R}^2)\big)\cap L^2\big(0,T^m;H^{5}(\mathbb{R}^2)\big).
\end{align*}
Thus, for $R\leq \frac{R_0}{2}$, there exists $T^m_0\leq T^m$ such that
\begin{align*}
\sup_{\xx\in\mathbb{R}^2,0<t<T^m_0}\int_{B_R(\xx)}(|\nabla\Fp_m|^2+|\vv_m|^2)(\yy,t)\ud\yy\leq 4\ve_1.
\end{align*}
In addition, it can be seen from Proposition \ref{local-monotonic-ineq-prop} that $T^m_0\geq\frac{\ve^2_1R^2_0}{4C^2_2E^2_0}=T_0>0$ uniformly. We know from Proposition \ref{higher-regularity-l-prop} that for any $\tau\in(0,T_0)$ and $l\geq 1$, there holds
\begin{align}\label{higher-regularity-ineq-sup}
&\sup_{t\in(\tau,T_0)}\int_{\mathbb{R}^2}\big(|\nabla^{l+1}\Fp_m|^2+|\nabla^l\vv_m|^2\big)(\cdot,t)\ud\xx+\int^{T_0}_{\tau}\int_{\mathbb{R}^2}\big(|\nabla^{l+2}\Fp_m|^2+|\nabla^{l+1}\vv_m|^2\big)(\cdot,s)\ud\xx\ud s\nonumber\\
&\quad\leq C\Big(l,\ve_1,E_0,\tau,T,\frac{T}{R^2}\Big).
\end{align}
Furthermore, using Proposition \ref{energ-diss-prop}, Proposition \ref{Fp-v-4-prop}, and the frame equations (\ref{new-frame-equation-n1})--(\ref{new-frame-equation-n3}), we can deduce that $E\big(\Fp_m,\vv_m\big)(t)\leq E_0$, and for any $t\in[0,T^m_0]$,
\begin{align}\label{higher-derivative-total-C}
\int_{\mathbb{R}^2\times[0,T^m_0]}\Big(|\nabla^2\Fp_m|^2+|\nabla\vv_m|^2+|\partial_t\Fp_m|^2+|\nabla\Fp_m|^4+|\vv_m|^4\Big)\ud\xx\ud t\leq C(\ve_1,C_2,E_0).
\end{align}

Using the equation of $\vv$  in (\ref{frame-equation-v}), the pressure term $p$ in the distributional sense can be expressed by
\begin{align*}
\Delta p=\partial_i\partial_j\big(\sigma_{ij}-v_iv_j\big)+\partial_i\mathfrak{F}_i\quad \text{on}~\mathbb{R}^2\times[0,T^m_0],
\end{align*}
which implies from Calder$\acute{\rm o}$n--Zygmund estimates that
\begin{align}\label{pm-L2-estimate}
&\int_{\mathbb{R}^2\times[0,T^m_0]}|p_m|^2\ud\xx\ud t\nonumber\\
&\quad\leq C\int_{\mathbb{R}^2\times[0,T^m_0]}\big(|\vv_m|^4+|\nabla\Fp_m|^4+|\nabla\vv_m|^2+|\partial_t\Fp_m|^2+|\nabla^2\Fp_m|^2\big)\ud\xx\ud t\nonumber\\
&\quad\leq C(\ve_1,C_2,E_0).
\end{align}
It remains to estimate the term $\partial_t\vv_m$. For any $\phi\in C^{\infty}_c\big(\mathbb{R}^2\times(0,T^m_0),\mathbb{R}^2\big)$, applying the equation (\ref{frame-equation-v}) and integrating by parts, we obtain
\begin{align*}
&\int_{\mathbb{R}^2\times[0,T^m_0]}\partial_t\vv_m\cdot\phi\ud\xx\ud t\\
&\quad=\int_{\mathbb{R}^2\times[0,T^m_0]}\Big(\big(\vv_m\otimes\vv_m-\eta\nabla\vv_m-\sigma_m\big)\cdot\nabla\phi+\mathfrak{F}_m\cdot\phi+p_m{\rm div}\phi\Big)\ud\xx\ud t\\
&\quad\leq C\big\||\vv_m|^2+|\nabla\Fp_m|^2+|\nabla\vv_m|+|p_m|+|\partial_t\Fp_m|+|\nabla^2\Fp_m|\big\|^2_{L^2(\mathbb{R}^2\times[0,T^m_0])}\|\phi\|_{L^2_tH^1_x}\\
&\quad \leq C(\ve_1,C_2,E_0)\|\phi\|_{L^2_tH^1_x},
\end{align*}
which further gives
\begin{align}\label{partial-t-vm-L2}
\|\partial_t\vv_m\|_{L^2(0,T^m_0),H^{-1}(\mathbb{R}^2)}\leq C(\ve_1,C_2,E_0).
\end{align}

Therefore, (\ref{higher-regularity-ineq-sup})--(\ref{partial-t-vm-L2}) together with the Aubin--Lions Lemma implies that after passing to possible subsequences, there exists a solution $(\Fp-\Fp^*,\vv)\in W^{2,1}_2(\mathbb{R}^2\times[0,T_0],SO(3))\times W^{1,0}_2(\mathbb{R}^2\times[0,T_0],\mathbb{R}^2)$ with the pressure $p$ such that
\begin{align*}
&\Fp_m-\Fp^*\rightarrow \Fp-\Fp^*,~~\text{weakly~in}~W^{2,1}_2(\mathbb{R}^2\times[0,T_0],\mathbb{R}^2);\\
&\vv_m\rightarrow\vv,~~\text{weakly~in}~W^{1,0}_2(\mathbb{R}^2\times[0,T_0],\mathbb{R}^2);\\
&p_m\rightarrow p,~~\text{weakly~in}~L^2(\mathbb{R}^2\times[0,T_0],\mathbb{R}).
\end{align*}
It follows from (\ref{higher-derivative-total-C}) and (\ref{partial-t-vm-L2}) that we can assume
\begin{align*}
(\nabla\Fp,\vv)(\cdot,t)\rightarrow (\nabla\Fp^{(0)},\vv^{(0)})~\text{weakly~in}~L^2(\mathbb{R}^2),
\end{align*}
as $t\rightarrow 0$. In particular, we have
\begin{align*}
E(0)\leq\lim\inf_{t\rightarrow 0}E(t).
\end{align*}
On the other hand, the energy estimates of $(\Fp_m,\vv_m)$ imply
\begin{align*}
    E(0)\geq\lim\sup_{t\rightarrow0} E(t).
\end{align*}
Hence, we conclude that $(\nabla\Fp,\vv)(\cdot,t)$ converges to $(\nabla\Fp^{(0)},\vv^{(0)})$ strongly in $L^2(\mathbb{R}^2)$ and $(\Fp,\vv)$ is the solution to the frame system (\ref{new-frame-equation-n1})--(\ref{imcompressible-v}) with the given data $(\Fp^{(0)},\vv^{(0)})$.

By taking the weak limit of the regular estimates (\ref{higher-regularity-ineq-sup}), it is clear that $(\Fp,\vv)\in C^{\infty}(\mathbb{R}^2\times(0,T_0])$ and $(\nabla^{l+1}\Fp,\nabla^l\vv)(\cdot,T_0)\in L^2(\mathbb{R}^2)$ for any $l\geq1$. However, we know from Theorem \ref{local-posedness-theorem} that there exists a unique smooth solution to the frame system (\ref{new-frame-equation-n1})--(\ref{imcompressible-v}) with the initial data $(\Fp,\vv)(\cdot,T_0)$, still denoted as $(\Fp,\vv)$. Moreover, the blow-up criterion implies that if $(\Fp,\vv)$ blows up at the time $T^*$, then
\begin{align*}
    \|\nabla\times\vv(t)\|_{L^{\infty}(\mathbb{R}^2)}+\|\nabla\Fp(t)\|^2_{L^{\infty}(\mathbb{R}^2)}\rightarrow \infty,~~\text{as}~t\rightarrow T^*,
\end{align*}
which further implies
\begin{align}\label{nabla-4-fp+v-blow-up}
(|\nabla^4\Fp|+|\nabla^3\vv|)(\xx,t)\notin L^{\infty}_tL^2_{\xx}(\mathbb{R}^2\times(T_0,T^*)).
\end{align}

Let $T_1\in(T_0,+\infty)$ be the first singular time of $(\Fp,\vv)$, that is,
\begin{align*}
(\Fp,\vv)\in C^{\infty}\big(\mathbb{R}^2\times(0,T_1),SO(3)\times\mathbb{R}^2\big),
~~\text{but}~~
(\Fp,\vv)\notin C^{\infty}\big(\mathbb{R}^2\times(0,T_1],SO(3)\times\mathbb{R}^2\big).
\end{align*}
Then, from Proposition \ref{local-monotonic-ineq-prop} and (\ref{nabla-4-fp+v-blow-up}), we have
\begin{align*}
\lim\sup_{t\uparrow T_1}\max_{\xx\in\mathbb{R}^2}\int_{B_R(\xx)}(|\nabla\Fp|^2+|\vv|^2)(\cdot,t)\ud\xx\geq\ve_1,~~\forall R>0.
\end{align*}

Now we look for an eternal extension of this weak solution in time. By an argument similar to the proof (\ref{partial-t-vm-L2})(also see \cite{Lin2} for the similar derivation), using the system (\ref{new-frame-equation-n1})--(\ref{imcompressible-v}) together with the Aubin--Lions Lemma and the Sobolev embedding theorem, we deduce that
$
(\Fp-\Fp^*,\vv)\in C^0([0,T_1], L^2(\mathbb{R}^2)).
$
Then, we define
\begin{align*}
\big(\Fp(T_1)-\Fp^*,\vv(T_1)\big)\eqdefa\lim_{t\uparrow T_1}\big(\Fp(t)-\Fp^*,\vv(t)\big),~~\text{in}~L^2(\mathbb{R}^2).
\end{align*}
By the energy inequality, we have $\nabla\Fp\in L^{\infty}(0,T_1;L^2(\mathbb{R}^2))$, and thus $\nabla\Fp(t)\rightarrow\nabla\Fp(T_1)$ weakly in $L^2(\mathbb{R}^2)$. In the same way, we can extend the time $T_1$ to $T_2$ and so on. It can be proven that at any singular time $T_i(i\geq1)$, there exists a loss of energy of at least $\ve_1$, and then the number of finite singular times must be bounded by $L$, and for $1\leq i\leq L$ there holds
\begin{align*}
\lim\sup_{t\uparrow T_i}\max_{\xx\in\mathbb{R}^2}\int_{B_R(\xx)}(|\nabla\Fp|^2+|\vv|^2)(\cdot,t)\ud\xx\geq \ve_1,\quad\forall R>0.
\end{align*}
Therefore, if we use $(\Fp(T_L),\vv(T_L))$  with the given initial data $(\Fp^{(0)},\vv^{(0)})$ to construct a weak solution $(\Fp,\vv)$ to the system (\ref{new-frame-equation-n1})--(\ref{imcompressible-v}) as before, then $(\Fp,\vv)$ is just an eternal weak solution that we seek.
In conclusion, Theorem \ref{global-posedness-theorem}
is established.

\appendix
\section{The proof of (\ref{higher-regularity-ineq}) for $l=2$}

We here give the argument of (\ref{higher-regularity-ineq}) for the case $l=2$. First note that
\begin{align*}
&|\nabla\nn_i|^2=|\nn_i\cdot\Delta\nn_i|\leq |\nabla^2\nn_i|,~~i=1,2,3,\\
&|\nabla\mathfrak{F}|\leq C(|\nabla^3\Fp||\nabla\Fp|+|\nabla^2\Fp|^2+|\nabla^2\Fp||\nabla\Fp|^2+|\nabla\Fp|^4).
\end{align*}
Multiplying the equation (\ref{frame-equation-v}) by $\Delta^2\vv$, we obtain
\begin{align}\label{grad-2-L2-estimate}
&\frac{1}{2}\frac{\ud}{\ud t}\int_{\mathbb{R}^2}|\nabla^2\vv|^2\ud\xx+\eta\int_{\mathbb{R}^2}|\nabla\Delta\vv|^2\ud\xx\nonumber\\
&\quad=-\langle(\vv\cdot\nabla)\vv\cdot\Delta^2\vv\rangle+\langle\nabla\cdot\sigma,\Delta^2\vv\rangle+\langle\mathfrak{F},\Delta^2\vv\rangle\nonumber\\
&\quad\leq\frac{\eta}{4}\int_{\mathbb{R}^2}|\nabla\Delta\vv|^2\ud\xx+C\int_{\mathbb{R}^2}|\nabla(\vv\cdot\nabla\vv)|^2\ud\xx+C\int_{\mathbb{R}^3}(|\nabla^3\Fp|^2|\nabla\Fp|^2+|\nabla^2\Fp|^4)\ud\xx\nonumber\\
&\qquad+\langle\nabla\cdot\sigma,\Delta^2\vv\rangle.
\end{align}
By a direct calculation, we get
\begin{align*}
&\langle\nabla\cdot\sigma,\Delta^2\vv\rangle=-\langle\sigma,\Delta^2(\A+\BOm)\rangle\\
&\quad\leq-\Big(\frac{\eta_3}{\chi_3}\int_{\mathbb{R}^2}(\Delta\A\cdot\sss_3)\CH^{\Delta}_3\ud\xx+\frac{\eta_2}{\chi_2}\int_{\mathbb{R}^2}(\Delta\A\cdot\sss_4)\CH^{\Delta}_2\ud\xx+\frac{\eta_1}{\chi_1}\int_{\mathbb{R}^2}(\Delta\A\cdot\sss_5)\CH^{\Delta}_1\ud\xx\Big)\\
&\qquad-\frac{1}{2}\int_{\mathbb{R}^2}\Big((\Delta\BOm\cdot\aaa_1)\CH^{\Delta}_3+(\Delta\BOm\cdot\aaa_2)\CH^{\Delta}_2+(\Delta\BOm\cdot\aaa_3)\CH^{\Delta}_1\Big)\ud\xx\\
&\qquad+C\int_{\mathbb{R}^2}|\nabla^3\vv|\big(|\nabla^2\Fp||\nabla\vv|+|\nabla^2\Fp|^2+|\nabla^2\vv||\nabla\Fp|+|\nabla^3\Fp||\nabla\Fp|\big)\ud\xx,
\end{align*}
where
\begin{align*}
\CH^{\Delta}_1=\nn_2\cdot\Delta\hh_3-\nn_3\cdot\Delta\hh_2,\quad
\CH^{\Delta}_2=\nn_3\cdot\Delta\hh_1-\nn_1\cdot\Delta\hh_3,\quad
\CH^{\Delta}_3=\nn_1\cdot\Delta\hh_2-\nn_2\cdot\Delta\hh_1.
\end{align*}
Then, (\ref{grad-2-L2-estimate}) together with the above two estimates yields
\begin{align}\label{grad-2-L2-estimate-final}
&\frac{1}{2}\frac{\ud}{\ud t}\int_{\mathbb{R}^2}|\nabla^2\vv|^2\ud\xx+\frac{\eta}{2}\int_{\mathbb{R}^2}|\nabla\Delta\vv|^2\ud\xx\nonumber\\
&\quad\leq-\Big(\frac{\eta_3}{\chi_3}\int_{\mathbb{R}^2}(\Delta\A\cdot\sss_3)\CH^{\Delta}_3\ud\xx+\frac{\eta_2}{\chi_2}\int_{\mathbb{R}^2}(\Delta\A\cdot\sss_4)\CH^{\Delta}_2\ud\xx+\frac{\eta_1}{\chi_1}\int_{\mathbb{R}^2}(\Delta\A\cdot\sss_5)\CH^{\Delta}_1\ud\xx\Big)\nonumber\\
&\qquad-\frac{1}{2}\int_{\mathbb{R}^2}\Big((\Delta\BOm\cdot\aaa_1)\CH^{\Delta}_3+(\Delta\BOm\cdot\aaa_2)\CH^{\Delta}_2+(\Delta\BOm\cdot\aaa_3)\CH^{\Delta}_1\Big)\ud\xx\nonumber\\
&\qquad+C\int_{\mathbb{R}^2}(|\nabla\vv|^4+|\nabla^2\Fp|^4)\ud\xx+C\int_{\mathbb{R}^2}(|\nabla^3\Fp|^2+|\nabla^2\vv|^2)(|\nabla\Fp|^2+|\vv|^2)\ud\xx.
\end{align}

On the other hand, we define
\begin{align*}
\CF^{\Delta}_{Bi}(\Fp)\eqdefa&\frac{1}{2}\sum^3_{i=1}\Big(\gamma_i\|\nabla\Delta\nn_i\|^2_{L^2}+k_i\|\Delta{\rm div}\nn_i\|^2_{L^2}+\sum^3_{j=1}k_{ji}\|\Delta(\nn_j\cdot\nabla\times\nn_i)\|^2_{L^2}\Big).
\end{align*}
Note that for $i=1,2,3$, the following estimates hold,
\begin{align*}
|\Delta\hh_i|\leq& C(|\nabla^4\Fp|+|\nabla^3\Fp||\nabla\Fp|+|\nabla^2\Fp|^2),\\
|\nn_i\cdot\Delta^2\nn_i|\leq& C(|\nabla\Fp||\nabla^3\Fp|+|\nabla^2\Fp|^2),\\
|\nabla^2\partial_t\Fp|^2\leq& \max\{1,\chi^{-1}_i\}(|\nabla^3\vv|^2+|\nabla^4\Fp|^2)+C|\nabla^2\vv|^2|\nabla\Fp|^2\\
&+C|\nabla^2\Fp|^2(|\nabla\vv|^2+|\nabla^2\Fp|^2).
\end{align*}
Acting the differential operator $\partial_{\beta}$ on the equations (\ref{new-frame-equation-n1})--(\ref{new-frame-equation-n3}), multiplying them by $\partial_{\beta}\Delta\hh_i(i=1,2,3)$, respectively, and integrating by parts, we derive that
\begin{align}\label{energy-CF-Delta}
&\frac{\ud}{\ud t}\CF^{\Delta}_{Bi}(\Fp)+\sum^3_{i=1}\int_{\mathbb{R}^2}\big[(\partial_{\beta}\vv\cdot\nabla)\nn_i+(\vv\cdot\nabla)\partial_{\beta}\nn_i\big]\cdot\partial_{\beta}\Delta\hh_i\ud\xx\nonumber\\
&\quad\leq\int_{\mathbb{R}^2}\Big(\frac{1}{2}\Delta\BOm\cdot\aaa_1+\frac{\eta_3}{\chi_3}\Delta\A\cdot\sss_3-\frac{1}{\chi_3}\CH^{\Delta}_3\Big)\CH^{\Delta}_3\ud\xx\nonumber\\
&\qquad+\int_{\mathbb{R}^2}\Big(\frac{1}{2}\Delta\BOm\cdot\aaa_2+\frac{\eta_2}{\chi_2}\Delta\A\cdot\sss_4-\frac{1}{\chi_2}\CH^{\Delta}_2\Big)\CH^{\Delta}_2\ud\xx\nonumber\\
&\qquad+\int_{\mathbb{R}^2}\Big(\frac{1}{2}\Delta\BOm\cdot\aaa_3+\frac{\eta_1}{\chi_1}\Delta\A\cdot\sss_5-\frac{1}{\chi_1}\CH^{\Delta}_1\Big)\CH^{\Delta}_1\ud\xx\nonumber\\
&\qquad+\delta\sum^3_{i=1}\int_{\mathbb{R}^2}|\nabla^2\partial_t\nn_i|^2\ud\xx+C_{\delta}\int_{\mathbb{R}^2}(|\nabla^2\Fp|^4+|\nabla^3\Fp|^2|\nabla\Fp|^2+|\nabla^2\Fp|^2|\nabla\Fp|^4)\ud\xx\nonumber\\
&\qquad+\frac{\gamma^2}{2\chi}\int_{\mathbb{R}^2}|\nabla^4\Fp|^2\ud\xx+C\int_{\mathbb{R}^2}(|\nabla\vv|^4+|\nabla^2\Fp|^4)\ud\xx\nonumber\\
&\qquad+C\int_{\mathbb{R}^2}(|\nabla^3\Fp|^2+|\nabla^2\vv|^2)(|\nabla\Fp|^2+|\vv|^2)\ud\xx,
\end{align}
where $\delta>0$ will be determined later and $\chi^{-1}=\min\{\chi^{-1}_1,\chi^{-1}_2,\chi^{-1}_3\}$.

A direct calculation shows that
\begin{align*}
&\Big|\sum^3_{i=1}\int_{\mathbb{R}^2}\big[(\partial_{\beta}\vv\cdot\nabla)\nn_i+(\vv\cdot\nabla)\partial_{\beta}\nn_i\big]\cdot\partial_{\beta}\Delta\hh_i\ud\xx\Big|\\
&\quad+\frac{\gamma^2}{2\chi}\int_{\mathbb{R}^2}|\nabla^4\Fp|^2\ud\xx+C\int_{\mathbb{R}^2}(|\nabla\vv|^4+|\nabla^2\Fp|^4)\ud\xx\nonumber\\
&\quad+C\int_{\mathbb{R}^2}(|\nabla^3\Fp|^2+|\nabla^2\vv|^2)(|\nabla\Fp|^2+|\vv|^2)\ud\xx.
\end{align*}
By an estimate similar to (\ref{higher-deriv-estimate}), we have
\begin{align*}
-\sum^3_{i=1}\frac{1}{\chi_i}\int_{\mathbb{R}^2}|\CH^{\Delta}_i|^2\ud\xx\leq-\frac{2\gamma^2}{\chi}\int_{\mathbb{R}^2}|\nabla^4\Fp|^2\ud\xx+C\int_{\mathbb{R}^2}(|\nabla\Fp|^2|\nabla^3\Fp|^2+|\nabla^2\Fp|^4)\ud\xx.
\end{align*}
Consequently, combining (\ref{grad-2-L2-estimate-final}) and (\ref{energy-CF-Delta}) together with the above estimates, and choosing $\delta>0$ small enough, and using the Gagliardo--Nirenberg--Sobolev inequality, we arrive at
\begin{align*}
&\frac{\ud}{\ud t}\Big(\frac{1}{2}|\nabla^2\vv|^2+\CF^{\Delta}_{Bi}(\Fp)\Big)+\int_{\mathbb{R}^2}\Big(\frac{\eta}{4}|\nabla^3\vv|^2+\frac{\gamma^2}{\chi}|\nabla^4\Fp|^2\Big)\ud\xx\\
&\quad\leq C\int_{\mathbb{R}^2}(|\nabla\vv|^4+|\nabla^2\Fp|^4)\ud\xx+C\int_{\mathbb{R}^2}(|\nabla^3\Fp|^2+|\nabla^2\vv|^2)(|\nabla\Fp|^2+|\vv|^2)\ud\xx\\
&\quad\leq C\Big(\int_{\mathbb{R}^2}(|\vv|^4+|\nabla\Fp|^4)\ud\xx\Big)^{\frac{1}{2}}\Big(\int_{\mathbb{R}^2}(|\nabla^2\vv|^4+|\nabla^3\Fp|^4)\ud\xx\Big)^{\frac{1}{2}}+C\int_{\mathbb{R}^2}(|\nabla\vv|^4+|\nabla^2\Fp|^4)\ud\xx\\
&\quad\leq\min\Big\{\frac{\eta}{8},\frac{\gamma^2}{2\chi}\Big\}\int_{\mathbb{R}^2}(|\nabla^3\vv|^2+|\nabla^4\Fp|^2)\ud\xx\\
&\qquad+C\Big(\int_{\mathbb{R}^2}(|\vv|^4+|\nabla\Fp|^4+|\nabla\vv|^2+|\nabla^2\Fp|^2)\ud\xx\Big)\Big(\int_{\mathbb{R}^2}(|\nabla^2\vv|^2+|\nabla^3\Fp|^2)\ud\xx\Big).
\end{align*}
Thus, by Gronwall's inequality and Proposition \ref{Fp-v-4-prop}, from (\ref{higher-regular-l=1}), it follows that
\begin{align}\label{higher-regular-l=2}
&\int_{\mathbb{R}^2}(|\nabla^3\Fp|^2+|\nabla^2\vv|^2)(\cdot,t)\ud\xx\nonumber\\
&\quad+\int^t_{\tau}\int_{\mathbb{R}^2}(|\nabla^4\Fp|^2+|\nabla^3\vv|^2)(\cdot,s)\ud\xx\ud s\leq C\Big(\ve_1,E_0,\tau,T,\frac{T}{R^2}\Big),
\end{align}
for all $t\in[\tau,T]$ with $\tau\in (0,T)$.

\bigskip
\noindent{\bf Acknowledgments.}
 Sirui Li is partially supported by the NSFC under grant No. 12061019 and by the Growth Foundation for Youth Science and Technology Talent of Educational Commission of Guizhou Province of China under grant No. [2021]087. Jie Xu is partially supported by the NSFC under grant No. 12001524.


\begin{thebibliography}{33}


\bibitem{Adams} { Adams, R. A., Fournier, J. J. F.}: Sobolev Spaces, 2nd edn. Pure and Applied Mathematics (Amsterdam),
vol. 140. Elsevier/Academic Press, Amsterdam (2003)


\bibitem{BCD} {Bahouri, H., Chemin, J. Y., Danchin, R.}: Fourier analysis and nonlinear partial differential equations. Fundamental Principles of Mathematical Sciences, vol. 343. Springer, Heidelberg (2011)


\bibitem{Ball} {Ball, J. M.}:  Mathematics and liquid crystals. { Mol. cryst. liq. cryst.}  {\bf 647}(1), 1--27 (2017)


\bibitem{BKM} {Beale, J.T., Kato, T., Majda, A.}: Remarks on the breakdown of smooth solutions for the 3-D Euler
Equations. {Commun. Math. Phys.} {\bf 94}, 61--66 (1984)


\bibitem{BP} {Brand, H., Pleiner, H.}: Hydrodynamics of biaxial discotics, {Phys. Rev. A}. {\bf 24}(5), 2777 (1981)

\bibitem{CKN} {Caffarelli, L., Kohn, R., Nirenberg, L.}: Partial regularity of suitable weak solutions of Navier--Stokes equations, {Comm. Pure Appl. Math}. {\bf 35}, 771--831 (1982)


\bibitem{CW} {Cai, Y., Wang, W.}: Global well-posedness for the three dimensional simplified inertial Ericksen--Leslie systems near equilibrium. {J. Funct. Anal.} {\bf 279}, 108521 (2020)

\bibitem{E-61} {Ericksen, J.}: Conservation laws for liquid crystals.
{Trans. Soc. Rheol}. {\bf 23}(5), 22--34 (1961)

\bibitem{E-91} {Ericksen, J.}: Liquid crystals with variable degree of orientation. {Arch. Ration. Mech. Anal}. {\bf 113}, 97--120 (1991)


\bibitem{EZ} {E, W. N., Zhang, P. W.}: A molecular kinetic theory of inhomogeneous liquid crystal flow and the small Deborah number limit. {Methods Appl. Anal}. {\bf 13}(2), 181--198 (2006)



\bibitem{MGJ} {Giaquinta, M., Modica, G., Soucek, J.}: Cartesian currents in the calculus of variations, part II. {Variational Integrals, A Series of Modern Surveys in Mathematics} vol. 38. Springer (1998)

\bibitem{GV1} {Govers, E., Vertogen, G.}: Elastic continuum theory of biaxial nematics. {Phys. Rev. A}, {\bf 30}(4), 1998 (1984)

\bibitem{GV2} {Govers, E., Vertogen, G.}: Fluid dynamics of biaxial nematics. {Physica 133A}. 337--344 (1985)

\bibitem{HLWZZ} {Han, J. Q., Luo, Y., Wang, W., Zhang, P. W., Zhang, Z. F.}: From microscopic theory to
macroscopic theory: systematic study on modeling for liquid crystals. {Arch. Ration. Mech. Anal}. {\bf 215}(3), 741--809 (2015)


\bibitem{HNPS} {Hieber, M., Nesensohn, M., Pr\"{u}ss, J., Schade, K.}: Dynamics of nematic liquid crystal flows: The quasilinear approach. {Ann. Inst. H. Poincar\'{e} Anal. Non Lin\'{e}aire}. {\bf 33}, 397--408 (2016)


\bibitem{Hong} {Hong, M.-C.}: Global existence of solutions of the simplified Ericksen--Leslie system in dimension two. {Calc. Var.
Partial Differ. Equ.} {\bf 40}(1--2), 15--36 (2011)

\bibitem{HX}{Hong, M.-C., Xin, Z.-P.}: Global existence of solutions of the liquid crystal flow for the Oseen--Frank
model in $\mathbb{R}^2$. {Adv. Math. }{\bf 231}, 1364--1400 (2012)

\bibitem{HLLW} {Huang, T., Lin, F. -H., Liu, C., Wang, C. Y.}: Finite time singularity of the nematic
liquid crystal flow in dimension three. {Arch. Rational Mech. Anal.} {\bf 221}(3), 1223--1254 (2016)

\bibitem{HLW} {Huang, J. R., Lin, F. -H, Wang, C. Y.}: Regularity and existence of global solutions to the
Ericksen--Leslie system in $\mathbb{R}^2$. {Commun. Math. Phys.} {\bf 331}(2), 805--850 (2014)


\bibitem{JL} {Jiang, N., Luo, Y.-L.}: On well-posedness of Ericksen--Leslie's hyperbolic incompressible liquid crystal model. {SIAM J. Math. Anal}. {\bf 51}, 403--434 (2019)

\bibitem{KD} {Kuzuu, N., Doi, M.}: Constitutive equation for nematic liquid crystals under weak velocity gradient derived from a molecular kinetic equation. {J. Phys. Soc. Japan}. {\bf 52}(10), 3486--3494 (1983)


\bibitem{LLWWZ}  {Lai, C.-C., Lin, F.-H., Wei, J. C., Wang, C. Y., Zhou, Y. Y. }: Finite time blow-up for the nematic liquid crystal flow in dimension two. {Commun. Pure Appl. Math}. {\bf 75}(1), 128--196 (2021)


\bibitem{Les} {Leslie, F.M.}: Some constitutive equations for liquid crystals. {Arch. Ration. Mech. Anal}.
{\bf 28}, 265--283 (1968)

\bibitem{LLC} {Leslie, F. M., Laverty, J. S., and Carlsson, T.}: Continuum theory for biaxial nematic liquid crystals. {Q. J. Mech.
Appl. Math}. {\bf 45}, 595--606 (1992)


\bibitem{LW} {Li, S.-R., Wang, W.}: Rigorous justification of the uniaxial limit from the Qian--Sheng inertial Q-tensor theory to the Ericksen--Lesile theory. {SIAM J. Math. Anal}. {\bf 52}(5), 4421--4468 (2020)


\bibitem{LWZ} {Li, S.-R., Wang, W., Zhang, P. W.}: Local well-posedness and small Deborah limit of a molecule-based $Q$-tensor system. {Discrete Contin. Dyn. Syst. Ser. B}. {\bf 20}(8), 2611--2655 (2015)


\bibitem{LX} {Li, S.-R., Xu, J.}: Frame hydrodynamics of biaxial nematics from molecular-theory-based tensor models. arXiv:2110.12137v1 (2021)



\bibitem{Lin2} {Lin, F.-H., Lin, J. Y., Wang, C. Y.}: Liquid crystal flows in two dimensions. {Arch. Ration. Mech. Anal}. {\bf 197}(1), 297--336 (2010)

\bibitem{Lin5} {Lin, F.-H., Wang, C. Y.}: On the uniqueness of heat flow of harmonic maps and hydrodynamic flow of nematic liquid crystals. {Chin. Ann. Math. Ser. B} {\bf 31}(6), 921--938 (2010)

\bibitem{Lin3} {Lin, F.-H., Wang, C. Y.}: Global existence of weak solutions of the nematic liquid crystal flow in dimension three. {Commun. Pure Appl. Math}. {\bf 69}(8), 1532--1571 (2016)


\bibitem{Lin4} {Lin, F.-H., Wang, C. Y.}: Recent developments of analysis for hydrodynamic flow of nematic liquid crystals. {Phil. Trans. R. Soc. A: Math. Phys. Eng. Sci}. {\bf 372}(2029), 20130361 (2014)


\bibitem{LLWa} {Lin, J. Y., Li, Y. M., Wang, C. Y.}: On static and hydrodynamic biaxial nematic liquid crystals. arXiv: 2006.04207 (2020)

\bibitem{Liu-M} {Liu, M.}: Hydrodynamic theory of biaxial nematics. {Phys. Rev. A}. {\bf 24}, 2720--2726 (1981)



\bibitem{S-W-M} {Saslow, W. M.}: Hydrodynamics of biaxial nematics with arbitrary nonsingular textures.
{Phys. Rev. A}. {\bf 25}(6), 3350--3359 (1982)


\bibitem{S-A} {Saupe, A.}: Elastic and flow properties of biaxial nematics. {J. Chem. Phys}. {\bf 75}(10), 5118 (1981)


\bibitem{SU} {Schoen, R., Uhlenbeck, K.}: Regularity of minimizing harmonic maps into the sphere. {Invent. Math.} {\bf 78}, 89--100 (1984)

\bibitem{SV} {Stallinga, S., Vertogen, G.}: Theory of orientational elasticity. {Phys. Rev. E}. {\bf 49}(2), 1483--1494 (1994)



\bibitem{Struwe} {Struwe, M.:} On the evolution of harmonic mappings of Riemannian surfaces. {Commun. Math. Helv}. {\bf 60}, 558--581 (1985)


\bibitem{Strume2} {Struwe, M.}: The existence of surfaces of constant mean curvature with free boundaries. {Acta Math}. {\bf 160}(1--2), 19--64 (1988)



\bibitem{Triebel} {Triebel, H.}: {Theory of function spaces}. Monographs in Mathematics, Birkh\"{a}user Verlag, Basel, Boston (1983)


\bibitem{WW} {Wang, M., Wang, W. D.}: Global existence of weak solution for the 2-D Ericksen-Leslie system.
{Calc. Var. Partial Differ. Equ}. {\bf 51}(3-4), 915--962 (2014)


\bibitem{WWZ-zju} {Wang, M., Wang, W. D., Zhang, Z. F.}: On the uniqueness of weak solution for the 2-D Ericksen--Leslie system. {Discrete Contin. Dyn. Syst. Ser. B}. {\bf 21}(3), 919--941 (2016)


\bibitem{WZZ4} {Wang, W.,  Zhang, L., Zhang, P. W.}: Modelling and computation of liquid crystals. {Acta Numerica}. {\bf 30}, 765--851 (2021)

\bibitem{WZZ1} {Wang, W.,  Zhang, P. W., Zhang, Z. F.}: Well-posedness of the Ericksen--Leslie system. {Arch. Ration. Mech. Anal}. {\bf 210}(3), 837--855 (2013)

\bibitem{WZZ2} {Wang, W.,  Zhang, P. W., Zhang, Z. F.}: The small Deborah number limit of the Doi--Onsager equation to the Ericksen--Leslie equation. {Comm. Pure Appl. Math}. {\bf 68}(8), 1326--1398 (2015)

\bibitem{WZZ3} {Wang, W.,  Zhang, P. W., Zhang, Z. F.}: Rigorous derivation from Landau--de Gennes theory to Ericksen--Leslie theory. {SIAM J. Math. Anal}. {\bf 47}(1), 127--158 (2015)



\bibitem{Xu2} {Xu, J., Zhang, P. W.}: Calculating elastic constants of bent-core molecules from Onsager-theory-based tensor model. {Liq. Cryst}. {\bf 45}(1), 22--31 (2018)


\bibitem{XZ} {Xu, J., Zhang, P. W.}: Onsager-theory-based dynamical model for nematic phases of bent-core molecules and star molecules. {J. Non-Newton. Fluid. Mech.} {\bf 251}(1), 43--55 (2018)



\end{thebibliography}
\end{document}